\newcommand{\Rd}{{\mathbb R^d}}
\newcommand{\R}{\mathbb{R}}
\newtheorem{lem}{Lemma}[section]
\newtheorem{thm}[lem]{Theorem}
\newtheorem{dfn}[lem]{Definition}
\newtheorem{cor}[lem]{Corollary}
\newtheorem{prop}[lem]{Proposition}
\newtheorem{rem}{Remark}
\DeclareMathOperator{\diam}{diam}
\DeclareMathOperator{\sgn}{sgn}
\title{Bregman variation of semimartingales}
\author[K.~Bogdan]{Krzysztof Bogdan}
\address{Faculty of Pure and Applied Mathematics, Wroc\l{}aw University of Science and Technology, Wyb. Wyspia\'nskiego 27, 50-370 Wroc\l{}aw, Poland.}
\email{krzysztof.bogdan@pwr.edu.pl}
\author[D.~Kutek]{Dominik Kutek}
\address{Institute of Mathematics, University of Warsaw, ul. Banacha 2, 02-097 Warsaw, Poland.}
\email{d.kutek@mimuw.edu.pl}
\author[K.~Pietruska-Pa\l uba]{Katarzyna Pietruska-Pa\l uba}
\address{Institute of Mathematics, University of Warsaw, ul. Banacha 2, 02-097 Warsaw, Poland.}
\email{kpp@mimuw.edu.pl}
\thanks{Supported by the NCN grant 2018/31/B/ST1/03818.}
\subjclass[2020]{60H05, 60G44 (primary), 60H30 (secondary)}
\keywords{semimartingale, Young function, Bregman divergence, Orlicz space, Hardy--Stein identity}
\begin{document}
\begin{abstract}
We define Bregman variation of semimartingales. We give its pathwise representation, Itô-type isometry for martingales, and applications to harmonic analysis.
\end{abstract}
\large
\maketitle

\section{Introduction and preliminaries}

The quadratic variation of a c\`adl\`ag semimartingale is a limit of cumulated squared increments (for definitions and details see below). 
It is a key concept in stochastic calculus, with many applications in mathematics, science, and economy. It yields a bridge between stochastic and deterministic integrations, especially for square integrable semimartingales; see, e.g.,  Protter \cite{MR2273672} and Revuz-Yor \cite{MR1725357}. In particular, the quadratic variation is of paramount importance in It\^o calculus, 
stochastic differential equations, finance and statistics. 

In this work, for convex functions $\phi$ satisfying mild growth and regularity conditions and c\`adl\`ag real-valued semimartingales $X_t, t\in [0,\infty)$, we study a similar, but more general concept, the \textit{Bregman variation}, or \textit{$\phi$-variation}, defined by
\begin{equation}\label{eq:var-def}
V^\phi(X)_t:=\phi(X_t)-\int_0^t \phi'(X_{s-})dX_s,\quad t\ge 0.
\end{equation}
As we shall see, 
Bregman variation is tailor-made for studying $\phi$-integrable semimartingales and problems of probability and analysis that involve convex functions more general than the quadratic function. 
Our theory has wider scope than that of the quadratic variation, but also exhibits some similarities. In particular, the Bregman variation of a semimartingale can be approximated by cumulated \textit{Bregman divergences} of the semimartingale. We propose a common framework that unifies observations and results in Probability and Analysis  accumulated in recent years (see \cite{MR4600287}, \cite{MR4589708}, \cite{2023+KB-MG-KPP}, \cite{MR4521651}, \cite{MR4622410}, \cite{bib:MG-MK} and \cite{MR3463679}). We also give applications to semigroups and harmonic functions of Markov processes.

Here is the structure of the paper. Below in this section we discuss convex functions on the line, the Bregman divergence, and semimartingales.
In Subsection~\ref{ss.dt}, we briefly address processes in discrete time. In Subsection~\ref{sec:phi-semimart}, we state precise assumptions for the definition of the Bregman divergence \eqref{eq:var-def} for continuous-time (c\`adl\`ag) semimartingales.
In Theorem~\ref{prop:integration}, we give an Itô-type isometry ($\phi$-isometry) for martingales and in Theorem~\ref{t.itm-smooth}, we provide a pathwise formula for $V^\phi(X)$  involving the jumps and the continuous part of the quadratic variation of the semimartingale.
In Section~\ref{s.a}, we present applications and examples. In Appendix~\ref{a.MM}, we give auxiliary estimates for $\phi$-integrable random variables and martingales and in Appendix~\ref{a.si}, we recall relevant notions and results on stochastic integration.

All the considered sets, functions, and measures are assumed to be Borel. The symbol $:=$ indicates definition, e.g., $ \mathbb R_+:=[0,\infty)$, $a\wedge b:=\min\{a,b\}$. Random variables are (finite) real-valued. Unless stated otherwise, stochastic process, in particular continuous-time martingale or semimartingale, is a family of (real-valued) random variables $X(t)=X_t$ indexed by $t\ge 0$, that is, $t\in \R_+$, defined on the same probability space. Equality of random variables is understood almost surely (a.s.). \textit{Constants} in \textit{inequalities} involving functions mean numbers in $(0,\infty)$ that do not depend on the arguments of the functions, see, e.g., \eqref{eq:delta-dwa}.

\subsection{Convex functions}

In this subsection, we briefly discuss convex functions; we refer to Krasnoselskii and Rutickii \cite{MR0126722}, Rao and Ren \cite{MR1113700}, Long \cite{MR1224450}, or Alsmeyr and R\"{o}sler \cite{MR2222745} for more information.
We say that $\phi$  is a \textit{Young function} if $\phi$ is even, continuous, and convex, $\phi:\mathbb R \to \mathbb R_+$, $\lim_{\lambda \to 0^+}\phi(\lambda)/\lambda=0$, and $\lim_{\lambda\to +\infty} \phi(\lambda)/\lambda = +\infty$. In particular, $\phi(0) = 0$.
Let $\phi^*$ be the
Legendre transform of a Young function $\phi$:
\[ \phi^*(\gamma) := \sup\left\{\ \! \gamma\lambda -\phi(\lambda):\ \lambda \in \mathbb R\ \!\right\},\qquad  \gamma\in \R.\]
Then $\phi^*$ is again a Young function and the definition immediately yields \textit{Young's inequality:}
\begin{equation}\label{eq:Young}\tag{YI}
\lambda \gamma \leq \phi(\lambda) + \phi^*(\gamma),\qquad \lambda, \gamma \in \R.
\end{equation}
We say that Young function $\phi$ satisfies the (doubling) condition $\Delta_2$, and we write $\phi\in \Delta_2$,  if there exists a constant $K>0$ such that
\begin{equation}\label{eq:delta-dwa}
\phi(2\lambda)\leq K \phi(\lambda),\quad \lambda >0.
\end{equation}
Below, $K_\phi$ will denote the smallest constant for which \eqref{eq:delta-dwa} holds.

 \begin{dfn}\label{def:moederate}
 A Young function $\phi$ is called \textit{moderate} if $\phi,\phi^* \in \Delta_2$.
 \end{dfn}
Let $\phi'$ be the left derivative of $\phi$; see \cite[(3.1.1)]{MR1224450}. Define
 \begin{equation}\label{e.ddDphi}
 D_\phi := \sup_{\lambda >0} \frac{\lambda \phi'(\lambda)}{\phi(\lambda)},  \qquad d_\phi := \inf_{\lambda >0} \frac{\lambda \phi'(\lambda)}{\phi(\lambda)}.
 \end{equation} 
 We call $D_\phi$ (respectively, $d_\phi$) the Simonenko upper (respectively, lower) index of $\phi$. Due to convexity, $1\le d_\phi \le D_\phi \le +\infty$. 
 By, \cite[Theorem 3.1.1]{MR1224450}, \eqref{eq:delta-dwa} is equivalent to $D_\phi < \infty$
and the condition $\Delta_2$ for $\phi^*$ is tantamount to $d_\phi > 1$; see also \cite[Theorem 4.1 and Theorem 4.3]{MR0126722} and \cite{MR2222745}. In short,
\textit{moderate} 
means
$1 < d_\phi \le D_\phi < \infty$.

\subsection{Luxemburg norm and Orlicz space}

Given a Young function \( \phi \) and a measure \( \mu \) on \( \mathbb R^d \), the \textit{Luxemburg norm} of a function \( f:\mathbb R^d \to \mathbb R \) is
\begin{equation}\label{e.dLn}
\|f\|_\phi:=\|f\|_{L^\phi(\Rd,\mu)} := \inf\left\{K > 0 : \int_{\mathbb R^d}\phi\Big(|f(x)|/K\Big){\rm d}\mu(x) \le 1 \right\}.
\end{equation}
We then define the \textit{Orlicz space}
\[L^\phi:=L^\phi(\Rd,\mu) := \left \{ f:\mathbb R^d \to \mathbb R:\; \|f\|_\phi<\infty\right \}.\]
If \( \mu \) is the Lebesgue measure on \( \Rd \), we  may write \( L^\phi(\Rd):=L^\phi(\Rd,\mu) \).
The Orlicz space
with the Luxemburg norm
is a Banach space, see \cite[Theorem 9.2 and p. 78]{MR0126722}.
For instance, \( L^\phi(\Rd, \mu)=L^p(\Rd,\mu) \) and \( \|\cdot\|_\phi=\|\cdot\|_p \) if \( \phi(\lambda)= |\lambda|^p \) with
\( p \in (1,\infty) \).
Furthermore, 
\begin{equation}\label{eq:orlicz2}
\int_{\Rd} \phi(f/\|f\|_\phi)\,{\rm d}\mu(y)\leq 1;\end{equation}
we even have equality in \eqref{eq:orlicz2}  if \( \phi\in \Delta_2 \).
We note that \( f \in L^\phi(\Rd,\mu) \) does not necessarily imply \( \phi \circ f \in L^1(\Rd,\mu) \), however, it does when \( \phi \in \Delta_2 \).
Indeed, for the latter claim, we consider \( n \in \mathbb N_+ \) that satisfies \( 2^n \ge \|f\|_\phi + 1 \), so by \( \Delta_2 \) and \eqref{e.dLn}, we obtain
\[ \int_{\Rd} \phi(f(y)){\rm d} \mu(y) \le K_\phi^n \int_{\Rd}\phi\left(\frac{f(y)}{\|f\|_\phi + 1}\right){\rm d}\mu(y) \le K_\phi^n < \infty.\]
Thus for $\phi\in \Delta_2$, $f\in L^\phi(\Rd,\mu)$ is equivalent to the \textit{$\phi$-integrability}: $\int_\Rd \phi(f)d\mu<\infty$.
\begin{prop}\label{prop:decomposition}
{\rm (1)} For every Young function \( \phi \), \( L^\phi\subset L^1+L^\infty \).\\
\noindent
{\rm (2)} If \( \phi \) is moderate, with Simonenko indices \( 1<d_\phi\leq D_\phi <
\infty, \) then \( L^\phi \subset L^{d_\phi}+L^{D_\phi} \).
\end{prop}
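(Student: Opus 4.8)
The plan is to prove both parts by an explicit threshold decomposition of $f$ into its large-value and small-value parts, matching each part to the growth of $\phi$ in the corresponding regime.

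For part (1), set $K := \|f\|_\phi$, which we may assume positive, and recall from \eqref{eq:orlicz2} that $\int_{\Rd}\phi(|f|/K)\,{\rm d}\mu \le 1$. Since $\lim_{\lambda\to+\infty}\phi(\lambda)/\lambda = +\infty$, I would fix $c_0$ with $\phi(\lambda)\ge \lambda$ for all $\lambda\ge c_0$, and split $f = g+h$ with $g := f\,\mathbf{1}_{\{|f|>c_0K\}}$ and $h := f\,\mathbf{1}_{\{|f|\le c_0 K\}}$. Then $h\in L^\infty$ with $\|h\|_\infty \le c_0 K$, while on $\{|f|>c_0K\}$ we have $|f|/K > c_0$, hence $|f|\le K\phi(|f|/K)$; integrating gives $\int|g|\,{\rm d}\mu \le K\int\phi(|f|/K)\,{\rm d}\mu \le K < \infty$, so $g\in L^1$. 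This yields $L^\phi \subset L^1+L^\infty$.

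For part (2), the key step is to convert the Simonenko indices into two-sided power comparisons for $\phi$. Since $\phi$ is moderate, $D_\phi<\infty$ forces $\phi(\lambda)>0$ for every $\lambda>0$ (otherwise the ratio $\lambda\phi'(\lambda)/\phi(\lambda)$ would be unbounded near the edge of $\{\phi=0\}$), so $\log\phi$ is well defined and absolutely continuous on $(0,\infty)$ with $(\log\phi)'=\phi'/\phi$ a.e. The definition \eqref{e.ddDphi} gives $d_\phi/\lambda \le (\log\phi)'(\lambda)\le D_\phi/\lambda$, and integrating from $1$ to $\lambda$ produces the lower bounds
\begin{equation*}
\phi(\lambda)\ge \phi(1)\,\lambda^{d_\phi}\ \ (\lambda\ge 1),\qquad \phi(\lambda)\ge \phi(1)\,\lambda^{D_\phi}\ \ (0<\lambda\le 1).
\end{equation*}
Because $\phi\in\Delta_2$, membership $f\in L^\phi$ is equivalent to $\int_{\Rd}\phi(|f|)\,{\rm d}\mu<\infty$. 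I would then split at the threshold $1$: writing $g := f\,\mathbf{1}_{\{|f|>1\}}$ and $h := f\,\mathbf{1}_{\{|f|\le 1\}}$, the first bound gives $\int |g|^{d_\phi}\,{\rm d}\mu \le \phi(1)^{-1}\int_{\{|f|>1\}}\phi(|f|)\,{\rm d}\mu<\infty$, so $g\in L^{d_\phi}$, and the second bound gives $\int |h|^{D_\phi}\,{\rm d}\mu \le \phi(1)^{-1}\int_{\{|f|\le 1\}}\phi(|f|)\,{\rm d}\mu<\infty$, so $h\in L^{D_\phi}$. Hence $f\in L^{d_\phi}+L^{D_\phi}$.

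The estimates above are immediate once the right regime is paired with the right index, so the main obstacle is bookkeeping: keeping straight that on large values ($\lambda\ge 1$) the relevant lower bound is $\lambda^{d_\phi}$ while on small values ($0<\lambda\le1$) it is $\lambda^{D_\phi}$, together with justifying the positivity of $\phi$ and the absolute continuity needed to integrate the logarithmic-derivative inequality. Alternatively, one may simply invoke the standard two-sided power bounds for moderate Young functions (cf.\ \cite{MR1224450,MR0126722}) in place of the direct integration.
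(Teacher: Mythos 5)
Your proof is correct and follows essentially the same route as the paper: the same large-value/small-value splitting of $f$ (at the threshold $c_0\|f\|_\phi$ in part (1), where $\phi(\lambda)\ge\lambda$ kicks in, and at the threshold $1$ in part (2)), with the integral bound \eqref{eq:orlicz2} respectively the $\Delta_2$-equivalence of $f\in L^\phi$ with $\int\phi(f)\,{\rm d}\mu<\infty$ controlling each piece. The only difference is cosmetic: where the paper obtains the power comparisons in (2) by citing the monotonicity of $\phi(\lambda)/\lambda^{d_\phi}$ and $\phi(\lambda)/\lambda^{D_\phi}$ from \cite[Theorem 3.1.1]{MR1224450}, you rederive the needed lower bounds by integrating $d_\phi/\lambda\le(\log\phi)'(\lambda)\le D_\phi/\lambda$, with a correct justification that $D_\phi<\infty$ forces $\phi>0$ on $(0,\infty)$.
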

\begin{proof} Let $\|f\|_\phi>0$.
We note that there is \( \lambda_0\in (0,\infty) \) such that \( \phi(\lambda)\ge \lambda \) if \( \lambda\ge \lambda_0 \). 
Then, \begin{equation}\label{e.lbphi}
\phi(f/\|f\|_\phi)\mathbf{1}_{|f|/\|f\|_\phi\geq \lambda_0} \ge  (f/\|f\|_\phi)\mathbf{1}_{|f|/\|f\|_\phi\geq \lambda_0}. 
\end{equation}
Of course,
$f=f_1+f_\infty$,
where
\( f_1:= f\mathbf{1}_{|f|/\|f\|_\phi\geq \lambda_0} \) and \( f_\infty:=f\mathbf{1}_{|f|/\|f\|_\phi< \lambda_0}\in L^\infty. \)
By \eqref{eq:orlicz2} and \eqref{e.lbphi},
\( f_1\in L^1 \). This proves (1).
In the setting of (2),  \( \phi(\lambda)/\lambda^{d_\phi} \) is increasing and
\( \phi(\lambda)/\lambda^{D_\phi} \) is decreasing on $[0,\infty)$ by \cite[Theorem 3.1.1]{MR1224450}, so
\[ \phi(1)\lambda^{d_\phi}\leq \phi(\lambda)\leq \phi(1) \lambda^{D_\phi},\quad \mbox{for }\lambda\geq 1,\]
\[ \phi(1)\lambda^{D_\phi}\leq \phi(\lambda)\leq \phi(1) \lambda^{d_\phi},\quad \mbox{for }\lambda\leq 1.\]
By \eqref{eq:orlicz2}, 
\( f_{d_\phi}:= f\mathbf{1}_{|f|\geq 1}\in L^{d_\phi} \) and
\( f_{D_\phi}:=f\mathbf{1}_{|f|<1}\in L^{D_\phi}\). Since $f=f_{d_\phi}+ f_{D_\phi}$, we get (2).
\end{proof}

 \subsection{Bregman divergence}
 For an arbitrary convex function $\phi: \R\to \R$, we define the Bregman divergence as the second order Taylor remainder of $\phi$:
 \begin{equation} \label{d.Bd}
 F_\phi(x,y) := \phi(y) - \phi(x) - \phi'(x)(y-x),\quad x,y \in \mathbb R.
 \end{equation}
Here $\phi'$ is the left derivative of $\phi$. By convexity, $F_\phi \ge 0$.
 E.g.,  if  $\phi(x)=|x|^p$, $p>1$,  then
 \[F_\phi(x,y)=F_p(x,y):=|y|^p-|x|^p - p|x|^{p-1} \sgn(x) (y-x),\quad x,y \in \mathbb R,\]
in particular, $F_2(x,y)=(y-x)^2$.

Bregman divergence is commonly used in statistical learning and its applications, see
Amari \cite{MR3495836}, Ay, Jost, Lê, and Schwachhöfer \cite{MR3701408},
Nielsen and Nock \cite{MR2598413}, or Frigyik, Gupta and Srivastava \cite{MR2589887}. Moreover, it is important for the theory of entropy, see, e.g., Wang \cite{MR3206685}. It is also used in PDEs in Gagliardo\nobreakdash--Nirenberg\nobreakdash--Sobolev inequalities, see Carrillo, J\"{u}ngel, Markowich, Toscani, and Unterreiter \cite[p. 71]{MR1853037}, Bonforte, Dolbeault, Nazaret, and Simonov \cite{2020arXiv200703674B},  and Borowski and Chlebicka \cite{MR4518648}. 

Our motivation stems from recent applications of Bregman divergences, chiefly $F_p$,  in harmonic analysis and probabilistic potential theory, see  Bogdan, Jakubowski, Lenczewska and Pietruska-Pa\l{}uba \cite{MR4372148}, Bogdan, Grzywny, Pietruska-Pa\l{}uba  and Rutkowski, \cite{MR4589708}, Bogdan, Fafuła and Rutkowski \cite{MR4600287},  Bogdan, Gutowski and Pietruska-Pa\l{}uba \cite{2023+KB-MG-KPP},  and in martingale inequalities, see Bogdan and Więcek \cite{MR4521651}.	For instance in \cite{MR4372148}, the corresponding \textit{Sobolev--Bregman forms} yield optimal estimates for certain Feynmann-Kac semigroups on $L^p$; for comparison, we refer the reader to Farkas, Jacob and Schilling \cite[(2.4)]{MR1808433} and to Jacob \cite[(4.294)]{MR1873235},  where different integral forms are used to analyze  semigroups in $L^p$. The results of the present paper indicate that Bregman variation of semimartingales, defined in Section~\ref{s.Bv}, is suitable for addressing similar issues in the generality of semimartingales. 

As usual, we say that random variable $Y$ is \textit{$\phi$-integrable} if the $\phi$-moment $\mathbb E \phi(Y)$ is finite, and we say a process is $\phi$-integrable if each of its random variables is $\phi$-integrable.
The following lemma indicates that  $F_\phi$ captures the evolution of the $\phi$-moment \textit{additively}.
\begin{lem}\label{lem:infty-integr} Let $\phi$ be a Young function.
Consider a random variable $Y\in L^1(\Omega,\mathcal F, \mathbb P)$ and $\sigma$-algebra $\mathcal A\subset \mathcal F$. If $X=\mathbb E[Y|\mathcal A]$ then
\begin{equation}\label{e.identity}
\mathbb E \phi(Y) = \mathbb E\phi(X) +\mathbb E F_\phi(X,Y).
\end{equation}
In particular, if  $\mathbb E\phi(X)<\infty$ and $\mathbb E\phi( Y)=\infty$, then $\mathbb EF_\phi(X,Y)=\infty$.
\end{lem}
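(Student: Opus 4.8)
The plan is to reduce the claimed identity to the vanishing of the ``cross term'' coming from the pointwise definition of the Bregman divergence. By \eqref{d.Bd}, for every sample point we have the identity
$$\phi(Y) = \phi(X) + \phi'(X)(Y-X) + F_\phi(X,Y).$$
Since $\phi\ge 0$ and $F_\phi\ge 0$, each of $\mathbb E\phi(Y)$, $\mathbb E\phi(X)$, $\mathbb EF_\phi(X,Y)$ is well defined in $[0,\infty]$, so \eqref{e.identity} will follow once I show that the expectation of $\phi'(X)(Y-X)$ vanishes in a suitable sense. The difficulty — and I expect the only genuine obstacle — is that $\phi'(X)(Y-X)$ need not be integrable: $\phi'$ may be unbounded while I only assume $Y\in L^1$. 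Hence I cannot simply invoke the tower property over all of $\Omega$.

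To circumvent this, I would truncate. Put $A_n:=\{|X|\le n\}$; since $X=\mathbb E[Y|\mathcal A]\in L^1$ is finite a.s.\ and $\mathcal A$-measurable, we have $A_n\in\mathcal A$ and $A_n\uparrow\Omega$ a.s. Because $\phi'$ is nondecreasing and everywhere finite (a finite convex function on $\R$ has finite one-sided derivatives), $\mathbf 1_{A_n}\phi'(X)$ is a bounded, $\mathcal A$-measurable random variable. Consequently $\mathbf 1_{A_n}\phi'(X)(Y-X)\in L^1$, and pulling out the $\mathcal A$-measurable factor together with $\mathbb E[Y-X\mid\mathcal A]=X-X=0$ gives
$$\mathbb E\big[\mathbf 1_{A_n}\phi'(X)(Y-X)\big] = \mathbb E\big[\mathbf 1_{A_n}\phi'(X)\,\mathbb E[Y-X\mid\mathcal A]\big]=0.$$

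Multiplying the pointwise identity by $\mathbf 1_{A_n}$ and taking expectations then yields
$$\mathbb E\big[\mathbf 1_{A_n}\phi(Y)\big] = \mathbb E\big[\mathbf 1_{A_n}\phi(X)\big] + \mathbb E\big[\mathbf 1_{A_n}F_\phi(X,Y)\big],$$
an identity valid in $[0,\infty]$: the term $\mathbf 1_{A_n}\phi(X)$ is bounded by $\phi(n)$, the cross term is integrable with zero mean, and the two remaining terms are nonnegative. Finally, letting $n\to\infty$ and applying monotone convergence to each of the three nonnegative, increasing sequences (increasing since $A_n\uparrow\Omega$) produces \eqref{e.identity}. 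The ``in particular'' claim is then immediate: if $\mathbb E\phi(X)<\infty$ and $\mathbb E\phi(Y)=\infty$, the identity forces $\mathbb EF_\phi(X,Y)=\infty$.

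A minor point I would check carefully is the legitimacy of the rearrangement leading to the displayed truncated identity when $\mathbb E[\mathbf 1_{A_n}F_\phi(X,Y)]=\infty$; this follows by moving the integrable cross term to the other side and comparing nonnegative quantities, using $\mathbf 1_{A_n}\phi(Y)+\big(\mathbf 1_{A_n}\phi'(X)(Y-X)\big)^-\ge \mathbf 1_{A_n}F_\phi(X,Y)$ to force the left-hand expectation to be infinite as well. Once the truncation resolves the integrability issue, the tower property and monotone convergence carry out the rest with no further difficulty.
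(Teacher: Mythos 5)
Your proof is correct and follows essentially the same route as the paper's: you truncate on the $\mathcal A$-measurable event $\{|X|\le n\}$ so that $\phi'(X)\mathbf 1_{\{|X|\le n\}}$ is bounded, annihilate the cross term $\phi'(X)(Y-X)$ via the tower property, and conclude by the Monotone Convergence Theorem. The only cosmetic difference is that the paper first disposes of the case $\mathbb E\phi(X)=\infty$ by conditional Jensen and packages the integrable terms into a single variable $R_M$ with $\mathbb E[R_M|\mathcal A]=\phi(X)\mathbf 1_{\{|X|\le M\}}$, whereas you treat all cases uniformly and justify the rearrangement in $[0,\infty]$ explicitly at the end --- both devices resolve the same potential $\infty-\infty$ issue.
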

Informally, the difference between both sides of \eqref{e.identity} is 
\[\mathbb E[\phi'(X)(Y-X)]=
\mathbb E[\mathbb E[\phi'(X)Y|\mathcal A]]-\mathbb E[\phi'(X)X]=
\mathbb E[\phi'(X) \mathbb E[Y|\mathcal A]]-\mathbb E[\phi'(X)X]=0,\]
but the point is to proceed without additional integrability assumptions on $X$ or $Y$, which are implicit in  this argument (and undermine it).
\begin{proof}
 If $\mathbb E\phi(X) = \infty$ then $\mathbb E\phi(Y) = \infty$, since $\mathbb E\phi(Y) < \infty$ and the conditional Jensen inequality would give a contradiction;
\begin{equation}\label{e.false}
\mathbb E \phi(Y)=\mathbb E\left[ \mathbb E [\phi(Y)|\mathcal A]\right]
\ge  \mathbb E  \phi(\mathbb E [Y|\mathcal A])
=\mathbb E \phi(X).
\end{equation}
Thus, \eqref{e.identity} is true if $\mathbb E\phi(X) = \infty$.
Assume that $\mathbb E\phi(X) < \infty$. Let $M\in [0,\infty)$ and 
\[R_M := \phi(X)1_{|X| \le M}  + \phi'(X)Y 1_{|X| \le M}- \phi'(X)X 1_{|X| \le M}.\] 
Put differently, 
\begin{equation}\label{e.RM}
\phi(Y)1_{|X| \le M} = R_M + F_\phi(X,Y)1_{|X| \le M}.
\end{equation}
Since $\phi'(X) 1_{|X| \le M}$ is  bounded and $\mathcal A$-measurable, and $Y$ is integrable,
\[ \mathbb E[\phi'(X)Y  1_{|X| \le M}|\mathcal A]=\phi'(X)1_{|X| \le M}\mathbb E[Y  |\mathcal A]= \phi'(X) X 1_{|X| \le M},\]
so $\mathbb E[R_M|\mathcal A]=\phi(X) 1_{|X|\le M}$.
Thus, $\mathbb E R_M\to \mathbb E\phi(X)$ as $M\to \infty$. Note that the other two terms in \eqref{e.RM} are positive, so \eqref{e.identity} follows from the Monotone Convergence Theorem. 
\end{proof}

\subsection{Semimartingales}
In this subsection, we follow \cite{MR2273672}, but we also like to mention inspirations coming from \cite[p. 334]{MR745449}. We assume that $(\Omega, \mathcal F, \mathbb P, (\mathcal F_t)_{t\geq 0})$  is a filtered probability space and the filtration  fulfills the usual conditions, i.e., it is complete and right continuous \cite[p. 3]{MR2273672}. Unless stated otherwise, all the notions below, especially adaptedness,  refer to this filtration.
Recall that a stochastic process $X:=(X_t, \,t\ge 0)$, is called c\`{a}dl\`{a}g if (a.s.) it has right-continuous paths with left limits and it is called c\`{a}gl\`{a}d if it is left continuous with right limits.
\begin{dfn}
Stochastic process
$X$ is called a semimartingale if $X_t=X_0 + M_t + A_t$, $t \ge 0$, where $M$ is a real-valued adapted c\`{a}dl\`{a}g local martingale, $A$ is a real-valued adapted c\`{a}dl\`{a}g process of finite variation, $M_0=A_0=0$, and $X_0$ is $\mathcal F_0$-measurable.
\end{dfn}
From the definition, every semimartingale $X$ is adapted and c\`{a}dl\`{a}g. The same concerns martingales; in what follows we tacitly or explicitly assume they are adapted and c\`{a}dl\`{a}g. For a semimartingale $X$, we define
$(X_{-})_s:=X_{s-}:= \lim_{r \uparrow s}X_r:=\lim_{r \to s,\, 0\le r<s}X_r$ for $s>0$, \begin{equation}\label{e:zerominus}
X_{0-}:=0,
\end{equation} and
$\Delta X_s:= X_s - X_{s-}$ for $s\ge 0$. Furthermore, if $Y$ is a \textit{c\`{a}gl\`{a}d} adapted process, then the stochastic integral 
\[(Y\cdot X)_t:= \int_{0}^tY_s\,{\rm d}X_s, \quad t\ge 0,\]
is well defined, where \[ \int_0^t Y_s\,{\rm d}X_s = Y_0X_0 + \int_{0+}^t Y_s\,{\rm d}X_s,\] should we use  the notation of \cite[Chapter II]{MR2273672} on the right-hand side; see also Appendix~\ref{a.si}. By \cite[Chapter II, Sections 4 and 5]{MR2273672},   the process $Y\cdot X$ is a semimartingale and
if $X$ is a local martingale then $Y\cdot X$ is a local martingale, too, see \cite[Chapter III, Theorem 33]{MR2273672}.

For a stopping time $\tau$ and a semimartingale $X$, we define the \textit{stopped} semimartingale $X^\tau$:
$$X^\tau_t := X_{t\wedge \tau},\quad t \ge 0.$$
By \cite[Chapter II, Theorem 12]{MR2273672},
\begin{equation}\label{e.si}
Y\cdot (X^\tau)=(Y\cdot X)^\tau=(Y\mathbf 1_{[0,\tau]})\cdot X.
\end{equation}

Here is one more notational convention: If $X:=(X^{(1)},\ldots,X^{(d)})$ and $Y:=(Y^{(1)},\ldots,Y^{(d)})$ are continuous-time processes  with values in $\R^d$
and, for each $i$, $X^{(i)}$ is a semimartingale and $Y^{(i)}$ is c\`{a}gl\`{a}d and adapted, then $Y\cdot X := \int Y\,{\rm d}X$ denotes the process
\[\sum_{i=1}^d  Y^{(i)}\cdot X^{(i)}=\sum_{i=1}^d \int Y^{(i)}{\rm d}X^{(i)}.\]
For more details we refer to \cite{MR2273672}; see also Appendix~\ref{a.si}.

\section{Bregman variation of stochastic processes}\label{s.Bv}

\subsection{Discrete time}\label{ss.dt}
As a preparation, we discuss the discrete-time  Bregman variation.
\begin{dfn}
Let $\phi$ be a (finite) convex function on $\R$ and let $\phi'$ be the left derivative of $\phi$.
The $\phi$-variation of a (real-valued) process $X_0, X_1,\ldots$ is
\begin{align}\label{e.VbyI}
V^\phi(X)_n  & := 
 \phi(X_n) 
- \sum_{0< k\le n}\phi'(X_{k-1})(X_{k}-X_{k-1}),\quad n=0, 1,\ldots.
\end{align}
\end{dfn}
Of course,  each $V^\phi(X)_n$ is adapted to $\mathcal F_n^X := \sigma(X_0,\ldots,X_n)$ and
\begin{align}
V^\phi(X)_n
&= \phi(X_0)+\sum_{0< k\le n}(\phi(X_{k})-\phi(X_{k-1})) - \sum_{0< k\le n}\phi'(X_{k-1})(X_{k}-X_{k-1})\nonumber\\
&=\phi(X_0)+\sum_{0< k\le n} F_\phi(X_{k-1},X_{k}).\label{e.VbyF}
\end{align}
Since $F_\phi \ge 0$, the process $V^\phi(X)$ is nondecreasing. If $\phi\ge 0$, in particular if $\phi$ is a Young function, then $V^\phi(X)\ge 0$, too.
By Lemma~\ref{lem:infty-integr} and \eqref{e.VbyF}, we get the following result.
\begin{cor}\label{cor:equal}
For a Young function $\phi$ and a martingale $X_0, \ldots, X_n$, $\mathbb E\phi(X_n)=\mathbb E V^\phi(X)_n$.
\end{cor}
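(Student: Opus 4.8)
The plan is to combine the two tools the excerpt has just set up: the additive decomposition of the $\phi$-moment from Lemma~\ref{lem:infty-integr}, and the representation \eqref{e.VbyF} of the discrete Bregman variation as a cumulated sum of Bregman divergences. The goal is the identity $\mathbb E\phi(X_n)=\mathbb E V^\phi(X)_n$ for a martingale $X_0,\ldots,X_n$. Taking expectations in \eqref{e.VbyF} and using linearity (tentatively) gives
\[
\mathbb E V^\phi(X)_n=\mathbb E\phi(X_0)+\sum_{0<k\le n}\mathbb E F_\phi(X_{k-1},X_k),
\]
so the task reduces to recognizing the right-hand side as a telescoping sum of $\phi$-moments.

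The key step is to apply Lemma~\ref{lem:infty-integr} to each increment with $Y=X_k$, $\mathcal A=\mathcal F_{k-1}^X$, and $X=X_{k-1}$. Since $X$ is a martingale, $\mathbb E[X_k\mid\mathcal F_{k-1}^X]=X_{k-1}$, so the hypothesis $X_{k-1}=\mathbb E[X_k\mid\mathcal A]$ of the lemma is satisfied; note also that each $X_k\in L^1$ because a martingale is integrable by definition, so the lemma applies. The lemma then yields
\[
\mathbb E\phi(X_k)=\mathbb E\phi(X_{k-1})+\mathbb E F_\phi(X_{k-1},X_k),\qquad 0<k\le n.
\]
Summing these identities over $k=1,\ldots,n$ telescopes the $\phi$-moment terms, leaving
\[
\mathbb E\phi(X_n)=\mathbb E\phi(X_0)+\sum_{0<k\le n}\mathbb E F_\phi(X_{k-1},X_k),
\]
which is exactly $\mathbb E V^\phi(X)_n$ by the displayed expression above.

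The main obstacle I anticipate is not the algebra but the bookkeeping of infinities, and this is precisely the subtlety Lemma~\ref{lem:infty-integr} was designed to handle: the intermediate $\phi$-moments $\mathbb E\phi(X_k)$ may be infinite, so one must not split $\mathbb E F_\phi$ into a difference of two expectations that are separately $+\infty$. The clean way to avoid this is to invoke the lemma \emph{as a single identity} for each increment (it already accommodates the case $\mathbb E\phi(X_{k-1})=\infty$, in which both sides are $+\infty$), and to telescope at the level of these valid identities rather than rearranging unbounded quantities. Because every term $\mathbb E F_\phi(X_{k-1},X_k)\ge 0$ (by $F_\phi\ge0$) and every $\mathbb E\phi(X_k)\ge0$ (as $\phi\ge0$ for a Young function), all sums are well defined in $[0,\infty]$ and the telescoping is legitimate without any finiteness assumption, so the identity holds in $[0,\infty]$ regardless of $\phi$-integrability of $X$.
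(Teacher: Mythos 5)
Your proof is correct and is essentially the paper's own argument: the paper proves the corollary exactly by combining Lemma~\ref{lem:infty-integr} (applied increment by increment via the martingale property) with the representation \eqref{e.VbyF}, telescoping in $[0,\infty]$. Your added care about infinite intermediate $\phi$-moments--working with the lemma's single identities (equivalently, by induction in $[0,\infty]$) rather than cancelling possibly infinite terms--is precisely the point the lemma was designed for.
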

Clearly, the sum $\sum_{0\le k<n}\phi'(X_k)(X_{k+1}-X_k)$ in \eqref{e.VbyI} is a discrete analogue of the stochastic integral $\int_0^t \phi'(X_{s-}){\rm d}X_s$ in \eqref{eq:var-def}, which prepares the scene for the next subsection.

\subsection{Continuous time}
\label{sec:phi-semimart}

We come back to the main setting of the paper. Suppose that $X$ is a continuous-time semimartingale. As usual, $C^1$ means \textit{continuously differentiable}.
\begin{dfn}\label{def-variation-cont}
Let $\phi$ be a $C^1$ Young function
and $X$ a continuous-time semimartingale. The Bregman variation, or $\phi$-variation, of $X$ is the stochastic process defined by \eqref{eq:var-def}.
\end{dfn}
In short, $V^\phi(X) := \phi(X) -\phi'(X_{-})\cdot X$. Since $\phi'(X_{-})$ is c\`{a}gl\`{a}d, the stochastic integral
in \eqref{eq:var-def} is a semimartingale  \cite[Theorem II.19]{MR2273672}.
\begin{rem}\label{rem:integralnotation}{\rm 
For clarity, we point out that, according to the notation of
\cite[Chapter II, Section 5]{MR2273672} and the convention \eqref{e:zerominus}, we have 
\[\int_{0}^t\phi'(X_{s-})\,{\rm d}X_s:= \int_{[0,t]}\phi'(X_{s-})\,{\rm d}X_s=\int_{(0,t]}\phi'(X_{s-})\,{\rm d}X_s\] because $\phi'(X_{0-})=\phi'(0)=0$.\qed
}
\end{rem}
Further, by \eqref{e.si}, 
\begin{equation}\label{e.gwiazdka}
V^\phi (X^\tau)=\left(V^\phi(X)\right)^\tau
\end{equation}
for each stopping time $\tau$.
As expected, $V^\phi(X)$
can be approximated by discrete sums: For \textit{random partitions} $\pi^{(n)}=  \{0=\tau_0^{(n)}\le \tau_1^{(n)} \le \ldots \le \tau_{k_n}^{(n)}\}$ \textit{tending to the identity}, we have
\[\int_0^t\phi'(X_{s-}) {\rm d}X_s = \lim_{n\to\infty}\sum_{k=0}^{k_n-1}\phi'(X_{t \wedge \tau_{k}^{(n)}})\left(X_{t \wedge \tau_{k+1}^{(n)}}-
X_{t \wedge \tau_{k}^{(n)}}\right)\] in the \textit{ucp} sense; see \cite[Chapter II, Section 5]{MR2273672} or Theorem \ref{th:approx} below.
Also in \textit{ucp},
\begin{equation*}\phi(X_t)-\phi(X_0) =\lim_{n \to \infty} \sum_{k=0}^{k_n-1} \left(\phi(X_{t \wedge \tau_{k+1}^{(n)}})-
\phi(X_{t \wedge \tau_{k}^{(n)}})\right).
\end{equation*}
Hence, by \eqref{d.Bd} and \eqref{eq:var-def},
\begin{align}\label{e.da}
V^\phi(X)_t & = \phi(X_0) +
\lim_{n\to\infty} \sum_{k=0}^{k_n-1} F_\phi(X_{t \wedge \tau_{k}^{(n)}},X_{t \wedge \tau_{k+1}^{(n)}}),
\end{align}
that is, the continuous-time $\phi$-variation is a \textit{ucp} limit of discrete $\phi$-variations for $X_{t \wedge \tau_{0}^{(n)}}, X_{t \wedge \tau_{1}^{(n)}},\ldots$; see Subsection~\ref{ss.dt}.
\begin{cor}\label{c.pBv}
The Bregman variation is c\`{a}dl\`{a}g, nondecreasing, nonnegative, and adapted.
\end{cor}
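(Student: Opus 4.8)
The plan is to split the four assertions into the two ``soft'' properties (càdlàg and adapted), which follow directly from the defining formula \eqref{eq:var-def}, and the two ``order'' properties (nonnegative and nondecreasing), which I would extract from the discrete approximation \eqref{e.da}.

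First, for càdlàg and adapted: since $X$ is a càdlàg adapted semimartingale and $\phi$ is continuous, the process $t\mapsto \phi(X_t)$ is càdlàg and adapted. The stochastic integral $\int_0^t\phi'(X_{s-})\,{\rm d}X_s$ in \eqref{eq:var-def} is a semimartingale, as noted after Definition~\ref{def-variation-cont}, hence also càdlàg and adapted. As the difference of two càdlàg adapted processes is càdlàg and adapted, both properties follow.

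For nonnegativity I would use \eqref{e.da}: for random partitions $\pi^{(n)}$ tending to the identity, the approximating sums $g_n(t):=\phi(X_0)+\sum_{k=0}^{k_n-1}F_\phi(X_{t\wedge\tau_k^{(n)}},X_{t\wedge\tau_{k+1}^{(n)}})$ converge to $V^\phi(X)_t$ in ucp. Since $\phi$ is a Young function, $\phi(X_0)\ge 0$, and $F_\phi\ge 0$ by convexity (as recorded after \eqref{d.Bd}), so $g_n(t)\ge 0$ for every $n$. Convergence in ucp yields convergence in probability at the fixed time $t$, hence a.s.\ convergence along a subsequence, so $V^\phi(X)_t\ge 0$ a.s.; ranging over a countable dense set of times and invoking right-continuity gives nonnegativity of all paths off a single null set. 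The monotonicity is the crux. Fix $s\le t$ and choose random partitions $\pi^{(n)}$ tending to the identity and containing the deterministic time $s$ as a common partition point, say $s=\tau_{j}^{(n)}$. In the difference $g_n(t)-g_n(s)$ every summand with $\tau_{k+1}^{(n)}\le s$ cancels, while every summand with $\tau_k^{(n)}\ge s$ reduces at time $s$ to $F_\phi(X_s,X_s)=0$; because $s$ is a partition point, no summand straddles $s$. Hence $g_n(t)-g_n(s)=\sum_{k:\tau_k^{(n)}\ge s}F_\phi(X_{t\wedge\tau_k^{(n)}},X_{t\wedge\tau_{k+1}^{(n)}})\ge 0$, so $g_n(s)\le g_n(t)$ for every $n$. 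Passing to the ucp limit along a subsequence gives $V^\phi(X)_s\le V^\phi(X)_t$ a.s.; a countable dense set of pairs together with right-continuity upgrades this to monotone paths a.s.

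The main obstacle is exactly this last step: a naive single-partition comparison using \eqref{e.da} at times $s$ and $t$ produces a ``straddling'' term $F_\phi(X_{\tau_k},X_{t\wedge\tau_{k+1}})-F_\phi(X_{\tau_k},X_s)$ on the interval containing $s$, which is not visibly nonnegative. Forcing $s$ to be a common partition point eliminates this term, after which the bookkeeping is routine; the only point requiring care is to confirm that partitions tending to the identity can be chosen to contain a prescribed deterministic time, which holds since inserting a fixed time only refines the mesh and preserves the defining properties of partitions tending to the identity.
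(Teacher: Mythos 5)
Your proof is correct and follows essentially the paper's own route: the paper presents Corollary~\ref{c.pBv} as an immediate consequence of the ucp approximation \eqref{e.da} by discrete Bregman variations (which are nonnegative and built from $F_\phi\ge 0$), with càdlàg and adaptedness read off from \eqref{eq:var-def}, exactly as you do. Your device of inserting the deterministic time $s$ as a partition point to eliminate the straddling term is a legitimate and indeed necessary piece of bookkeeping that the paper glosses over --- the approximants $g_n(\cdot)$ are not pathwise monotone, since $F_\phi(X_{\tau_k},X_r)$ can decrease in $r$ --- so your write-up correctly fills in the detail behind the paper's unproved assertion.
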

Bregman variation enjoys an It\^o-type isometry, or $\phi$-isometry. 
\begin{thm}\label{prop:integration}
Suppose that $(X_t, t \ge 0)$ is a
c\`{a}dl\`{a}g local martingale and $\phi$ is a $C^1$ moderate Young function. The following two conditions are equivalent:

{\rm (1)} $(X_t)_{t\geq 0}$ is a $\phi$-integrable martingale.

{\rm (2)} For every $t\ge 0$,  $\mathbb E V^\phi(X)_t<\infty.$\\
If they hold, then
\begin{equation}\label{eq:ito-isometry}
\mathbb E \phi(X_t) = \mathbb E V^\phi(X)_t,\quad t\ge 0.
\end{equation}

\end{thm}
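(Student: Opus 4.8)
My plan is to reduce everything to the single observation that the stochastic integral $N:=\phi'(X_-)\cdot X$ in \eqref{eq:var-def} is a local martingale, and then to remove the localization by monotone convergence. Since $\phi$ is $C^1$, the integrand $\phi'(X_{s-})$ is c\`agl\`ad and adapted, with $\phi'(X_{0-})=\phi'(0)=0$; hence it is locally bounded and, $X$ being a local martingale, $N$ is a local martingale by the integration results quoted in Section~1 (Protter \cite[Chapter III, Theorem 33]{MR2273672}). Fix $t\ge0$ and choose a localizing sequence $\tau_n\uparrow\infty$ for $N$, so that each $N^{\tau_n}$ is a true martingale and $\mathbb EN_{t\wedge\tau_n}=0$. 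Evaluating $V^\phi(X)=\phi(X)-N$ at $t\wedge\tau_n$ and taking expectations gives the master identity
\[
\mathbb EV^\phi(X)_{t\wedge\tau_n}=\mathbb E\phi(X_{t\wedge\tau_n})\qquad\text{in }[0,\infty],
\]
valid for every $n$ (when the right-hand side is infinite, so is the left, because $N_{t\wedge\tau_n}$ is integrable while $V^\phi(X)\ge0$). Since $V^\phi(X)$ is nonnegative and nondecreasing (Corollary~\ref{c.pBv}) and $t\wedge\tau_n\uparrow t$, the monotone convergence theorem yields $\lim_n\mathbb E\phi(X_{t\wedge\tau_n})=\mathbb EV^\phi(X)_t\in[0,\infty]$.

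With this in hand both implications are short. Assume \textup{(1)}, i.e.\ $X$ is a $\phi$-integrable martingale. Then $X$ is closed on $[0,t]$ by the integrable $X_t$, optional stopping gives $X_{t\wedge\tau_n}=\mathbb E[X_t\mid\mathcal F_{t\wedge\tau_n}]$, and Jensen's inequality gives $0\le\phi(X_{t\wedge\tau_n})\le\mathbb E[\phi(X_t)\mid\mathcal F_{t\wedge\tau_n}]$; the right-hand sides form a uniformly integrable family, so $\mathbb E\phi(X_{t\wedge\tau_n})\to\mathbb E\phi(X_t)$. Combined with the limit above, this proves \textup{(2)} and the identity \eqref{eq:ito-isometry}. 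Conversely, assume \textup{(2)}. The displayed limit gives $\sup_n\mathbb E\phi(X_{t\wedge\tau_n})\le\mathbb EV^\phi(X)_t<\infty$; since the Young function $\phi$ is superlinear, the de la Vall\'ee--Poussin criterion makes $\{X_{t\wedge\tau_n}\}_n$ uniformly integrable. Hence $X_{t\wedge\tau_n}\to X_t$ in $L^1$, which upgrades the local martingale $X$ to a genuine martingale (pass to the $L^1$-limit in $\mathbb E[X_{t\wedge\tau_n}\mid\mathcal F_s]=X_{s\wedge\tau_n}$, the family $\{X_{s\wedge\tau_n}\}_n$ being uniformly integrable for the same reason), while Fatou's lemma gives $\mathbb E\phi(X_t)\le\mathbb EV^\phi(X)_t<\infty$, so $X$ is $\phi$-integrable. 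This is \textup{(1)}, and the identity then follows from the first implication.

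The one genuinely delicate point is the passage from $\mathbb EN_{t\wedge\tau_n}=0$ to $\mathbb EN_t=0$, i.e.\ turning the local martingale $N$ into a true, mean-zero martingale; the argument above achieves this indirectly, letting the monotonicity of $V^\phi(X)$ supply the limit on one side and uniform integrability on the other, rather than proving integrability of $N$ head-on. Here the hypotheses on $\phi$ enter: superlinearity of the Young function drives the de la Vall\'ee--Poussin step, while the moderate condition $1<d_\phi\le D_\phi<\infty$ makes $\phi$-integrability coincide with membership in $L^\phi$ and underlies the quantitative bounds of Appendix~\ref{a.MM}. In particular, one can alternatively prove directly that $N$ is a true martingale, using the Burkholder--Davis--Gundy inequality together with the pointwise bound $\phi^*(\phi'(\lambda))=\lambda\phi'(\lambda)-\phi(\lambda)\le(D_\phi-1)\phi(\lambda)$, which comes from equality in Young's inequality \eqref{eq:Young} at $\gamma=\phi'(\lambda)$ and requires $D_\phi<\infty$; the localization route I have sketched sidesteps these estimates, but the moderate assumption is what keeps the two $\phi$-moments controlling each other throughout.
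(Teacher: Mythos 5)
Your proposal is correct up to one small repair, and it removes the localization by a genuinely different mechanism than the paper's proof. Both arguments share the same skeleton: $\phi(X)-V^\phi(X)=\phi'(X_-)\cdot X$ is a local martingale, stopping gives $\mathbb E\phi(X_{t\wedge\tau_n})=\mathbb E V^\phi(X)_{t\wedge\tau_n}$, and monotone convergence handles the $V^\phi$ side. The difference lies in how the remaining limit $\mathbb E\phi(X_{t\wedge\tau_n})\to\mathbb E\phi(X_t)$ and the martingale upgrade are justified: the paper invokes the Orlicz--Doob maximal inequality of Corollary~\ref{cor:doob-cont} (with constant $C_\phi=\left(d_\phi/(d_\phi-1)\right)^{D_\phi}$, which is where moderateness enters) plus dominated convergence, and Lemma~\ref{le:embedding} to get $\sup_{s\le t}|X_s|\in L^1(\mathbb P)$; you instead use optional sampling at the bounded stopping times $t\wedge\tau_n$, conditional Jensen, and uniform integrability of the family $\{\mathbb E[\phi(X_t)\mid\mathcal F_{t\wedge\tau_n}]\}_n$ for (1)$\Rightarrow$(2), and the de la Vall\'ee--Poussin criterion (with test function $\phi$ itself, superlinear by the definition of a Young function) for (2)$\Rightarrow$(1). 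A notable byproduct: your route never uses $1<d_\phi\le D_\phi<\infty$, so it in fact establishes the equivalence and the isometry \eqref{eq:ito-isometry} for an arbitrary $C^1$ Young function; the moderate assumption is what powers the maximal-function machinery of the paper's proof (and the subsequent corollaries involving stopping times), but your uniform-integrability substitute sidesteps it for this theorem.

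The one repair: in (2)$\Rightarrow$(1) you take $\tau_n$ to be a localizing sequence for $N=\phi'(X_-)\cdot X$ only, but your final step uses $\mathbb E[X_{t\wedge\tau_n}\mid\mathcal F_s]=X_{s\wedge\tau_n}$, which requires $X^{\tau_n}$ to be a true martingale, and a sequence reducing $N$ need not reduce $X$. Replace $\tau_n$ by $\tau_n\wedge\sigma_n$, where $\sigma_n$ localizes the local martingale $X$; this is exactly the precaution the paper takes when it chooses $T_N$ so that both the process \eqref{e.m0e} and $X^{T_N}$ are martingales. With this change everything goes through: $\{X_{s\wedge\tau_n}\}_n$ is uniformly integrable for each $s\le t$ by your de la Vall\'ee--Poussin bound applied at time $s$, so both sides of $\mathbb E[X_{t\wedge\tau_n}\mid\mathcal F_s]=X_{s\wedge\tau_n}$ converge in $L^1$ and $X$ is a genuine martingale, while Fatou supplies $\mathbb E\phi(X_t)\le\mathbb EV^\phi(X)_t<\infty$ as you state.
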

\begin{proof}

Assume that (1) holds. The process $\phi(X)-V^\phi(X)={\phi'(X_{-})\cdot X}$
is a local martingale. Let $T_N\nearrow \infty$ be its localizing sequence of stopping times. Thus, for $N=1,2,...,$ the process
\begin{equation}\label{e.m0e}
\phi(X_{t\wedge T_N})-V^\phi(X)_{t\wedge T_N},\quad t\ge 0,
\end{equation}
is a martingale with expectation $0$. In particular,
\begin{equation}\label{e.sM}
\mathbb E\phi(X_{t\wedge T_N}) = \mathbb E V^\phi(X)_{t \wedge T_N},\quad t\ge 0.
\end{equation}
Fix $t\in [0,\infty)$ and let $N\to \infty$. We have $t\wedge T_N \nearrow t$, in fact, $t\wedge T_N = t$ for $N$ large enough (depending on $\omega$). By Corollary~\ref{c.pBv} and
the Monotone Convergence Theorem,
\begin{equation}\label{eq:iso-t}
\mathbb E V^\phi(X)_t = \lim_{n\to\infty} \mathbb  E V^\phi(X)_{t\wedge T_n} = \lim_{n\to\infty} \mathbb E \phi(X_{t\wedge T_n}) =  \mathbb E \lim_{n\to\infty}\phi(X_{t\wedge T_n}) = \mathbb E \phi(X_t). 
\end{equation}
The penultimate equality is legitimate since  $\mathbb E\phi(X_t)^*\leq \left(\frac{d_\phi}{d_\phi-1}\right)^{D_\phi} \mathbb E\phi(X_t)<\infty$ by Corollary \ref{cor:doob-cont}, so the Dominated Convergence Theorem can be applied\footnote{As usual, $X_t^* = \sup_{0 \le s \le t}X_s$ and $\phi(X_t)^*=\sup_{0\le s\le t} \phi(X_s) = \phi(|X_t|^*)$.}. Thus, (1) implies (2). Moreover, (1) implies the equation \eqref{eq:ito-isometry}.
\smallskip

Assume now that (2) holds. Since $(X_t)_{t \ge 0}$ is a local martingale, we can again find a localizing sequence of stopping times $T_N\nearrow \infty$ such that both the process \eqref{e.m0e} and $X^{T_N}$ are martingales for each $N=1,2,...$. 
Then, $\mathbb E\phi(X_{t \wedge T_N}) = \mathbb EV^\phi(X)_{t \wedge T_N}$ for all $t \ge 0$ and $N =1,2,\ldots$. By Fatou's lemma and Corollary~\ref{c.pBv}, 
\[ \mathbb E\phi(X_t) \le \liminf_{N \to \infty} \mathbb E\phi(X_{t \wedge T_N}) = \liminf_{N \to \infty} \mathbb EV^\phi(X_{t \wedge T_N}) \le \mathbb EV^\phi(X)_t < \infty,\] 
so $X$ is $\phi$-integrable. It remains to check that $X$ is a martingale. By Corollary \ref{cor:doob-cont} and the implication (1) $\Rightarrow$ (2) applied to the martingales $X^{T_N}$,
\begin{eqnarray*}
    &&  \mathbb E \sup_{s \le t}\phi(X_s) = \mathbb E\sup_N\sup_{s\le t\wedge T_N}\phi(X_s)= \sup_N \mathbb E  \sup_{s \le t\wedge T_N} \phi(X_s^{T_N}) = \sup_N \mathbb E \sup_{s\leq t}  
\phi(X_s^{T_N})\\
& \le &  C_\phi \sup_{N} \mathbb E\phi(X_t^{T_N}) = C_\phi \sup_{N} \mathbb EV^\phi(X)_{t \wedge T_N} = C_\phi \mathbb EV^\phi(X)_t < \infty.
\end{eqnarray*}
In particular, $\sup_{s \le t}|X_s| \in L^1(\mathbb P)$, see Lemma \ref{le:embedding}. Then, by the Dominated Convergence Theorem, we conclude that $X_r = \mathbb E[X_s | \mathcal F_r]$, so $X$ is a true martingale.
\end{proof}

Note that \eqref{eq:ito-isometry} needs not hold when deterministic times $t$ are replaced by stopping times $\tau$. This can be seen in the case of the quadratic variation of the Brownian motion $(W_t, t \ge 0)$ and the stopping time $\tau = \inf\{t \ge 0 : W_t = 1\}$. However, \eqref{eq:ito-isometry} clearly holds for $T \wedge \tau$ with arbitrary $T \ge 0$. So, if we make an additional integrability assumption $V^\phi(X)_\tau \in L^1(\mathbb P)$, we can pass to the limit $t\to\infty$ and obtain the following result.

\begin{cor}\label{c.irt} If $\phi$ is a $C^1$ moderate Young function,  $(X_t, t\ge 0)$ is a c\`adl\`ag martingale, $\tau$ is an a.s. finite stopping time, and $\mathbb EV^\phi(X)_\tau < \infty,$  then $\mathbb E\phi(X_\tau) = \mathbb EV^\phi(X)_\tau.$

\end{cor}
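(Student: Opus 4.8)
The plan is to apply the $\phi$-isometry \eqref{eq:ito-isometry} to the stopped martingale $X^\tau$ and then let the deterministic time tend to infinity. First I would note that $X^\tau$ is again a c\`adl\`ag local martingale and that, by \eqref{e.gwiazdka} together with the monotonicity of $V^\phi(X)$ recorded in Corollary~\ref{c.pBv},
\[
\mathbb E V^\phi(X^\tau)_t = \mathbb E V^\phi(X)_{t\wedge\tau}\le \mathbb E V^\phi(X)_\tau<\infty,\qquad t\ge 0 .
\]
Hence condition (2) of Theorem~\ref{prop:integration} holds for $X^\tau$, so that theorem tells us $X^\tau$ is a $\phi$-integrable martingale and, via \eqref{eq:ito-isometry},
\[
\mathbb E\phi(X_{t\wedge\tau}) = \mathbb E V^\phi(X)_{t\wedge\tau},\qquad t\ge 0 .
\]

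Next I would pass to the limit $t\to\infty$ in this identity. Since $\tau<\infty$ a.s., for each $\omega$ we have $t\wedge\tau=\tau$ once $t$ is large enough, so $X_{t\wedge\tau}\to X_\tau$ and $V^\phi(X)_{t\wedge\tau}\to V^\phi(X)_\tau$ a.s. The right-hand side is straightforward: $V^\phi(X)_{t\wedge\tau}$ increases to $V^\phi(X)_\tau$ by Corollary~\ref{c.pBv}, so the Monotone Convergence Theorem yields $\mathbb E V^\phi(X)_{t\wedge\tau}\to \mathbb E V^\phi(X)_\tau$.

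The main obstacle is the left-hand side, where $\phi$ is unbounded, so passing $\mathbb E\phi(X_{t\wedge\tau})\to\mathbb E\phi(X_\tau)$ to the limit requires an integrable majorant. I would produce one exactly as in the proof of Theorem~\ref{prop:integration}, using the Orlicz--Doob maximal inequality of Corollary~\ref{cor:doob-cont} for the $\phi$-integrable martingale $X^\tau$: for each fixed $t$,
\[
\mathbb E\sup_{0\le s\le t}\phi(X^\tau_s)\le C_\phi\,\mathbb E\phi(X^\tau_t)=C_\phi\,\mathbb E V^\phi(X)_{t\wedge\tau}\le C_\phi\,\mathbb E V^\phi(X)_\tau,
\]
and letting $t\to\infty$ (monotone convergence on the left, using that $X^\tau$ is constant after $\tau$) gives $\mathbb E\sup_{0\le s\le\tau}\phi(X_s)\le C_\phi\,\mathbb E V^\phi(X)_\tau<\infty$. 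Thus $\sup_{0\le s\le\tau}\phi(X_s)$ is an integrable function dominating $\phi(X_{t\wedge\tau})$ for every $t$, and the Dominated Convergence Theorem gives $\mathbb E\phi(X_{t\wedge\tau})\to\mathbb E\phi(X_\tau)$. Combining this with the monotone limit on the right yields $\mathbb E\phi(X_\tau)=\mathbb E V^\phi(X)_\tau$, as claimed.
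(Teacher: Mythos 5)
Your proposal is correct and follows essentially the same route as the paper's proof: apply Theorem~\ref{prop:integration} to the stopped martingale $X^\tau$ (using \eqref{e.gwiazdka} to identify $V^\phi(X^\tau)$ with $V^\phi(X)^\tau$), then pass to the limit $t\to\infty$ via the Monotone Convergence Theorem on the right and the Dominated Convergence Theorem on the left, with the integrable majorant $\sup_{s\le\tau}\phi(X_s)$ supplied by the Orlicz--Doob inequality of Corollary~\ref{cor:doob-cont}. Your write-up is, if anything, slightly more explicit than the paper's in verifying condition (2) of Theorem~\ref{prop:integration} for $X^\tau$.
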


\begin{proof}

We get $\mathbb E\phi(X_{T \wedge \tau}) = \mathbb EV^\phi(X)_{T \wedge \tau}$ for all $T \ge 0$ by applying Theorem \ref{prop:integration} to the $\phi$-integrable martingale $X^\tau$. So, by Corollary \ref{cor:doob-cont}, \[ \mathbb E\sup_{t \ge 0}\phi(X_t^\tau) = \sup_{T \ge 0}\mathbb E\sup_{t \le T}\phi(X_t^\tau) \le C_\phi \sup_{T \ge 0}\mathbb E\phi(X_T^\tau) = C_\phi \sup_{T \ge 0}\mathbb EV^\phi(X)_{T \wedge \tau} \le C_\phi \mathbb EV^\phi(X)_\tau < \infty.\] By the Dominated Convergence Theorem and Monotone Convergence Theorem, \[ \mathbb E\phi(X_\tau) = \lim_{t \to \infty} \mathbb E\phi(X_{t \wedge \tau}) = \lim_{t \to \infty} \mathbb EV^\phi(X)_{t \wedge \tau} = \mathbb EV^\phi(X)_\tau. \qedhere\]
\end{proof}

For (closed) martingales with finite time horizon, we proceed as follows.
If  $T>0$ and $(Y_t, t \in [0,T])$ is a c\`{a}dl\`{a}g martingale with respect to a complete, right-continuous filtration $\mathcal G^Y_t$, $t\in [0,T]$, we define $X_t := Y_t$, $\mathcal F_t:=\mathcal G_t$ for $t \le T$ and $X_t := Y_T$, $\mathcal F_t:=\mathcal G_T$ for $t>T$. Then, $V^\phi(Y)_t=V^\phi(X)_t$ for $t\in [0,T]$
and $V^\phi(Y)^\tau=V^\phi(Y^\tau)$ for every stopping time $\tau$. 

\begin{cor}
    \label{l.mT} If $\phi$ is a $C^1$ moderate Young function and $(Y_t, t \in [0,T])$ is a c\`{a}dl\`{a}g martingale with $\mathbb EV^\phi(Y)_T < \infty$,
then $\mathbb E\phi(Y_T) = \mathbb EV^\phi(Y)_T $.
\end{cor}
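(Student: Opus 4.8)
The plan is to reduce the finite-horizon statement to Corollary~\ref{c.irt} by means of the extension of $Y$ described immediately before the statement. Recall that we set $X_t := Y_t$, $\mathcal F_t := \mathcal G_t$ for $t \le T$ and $X_t := Y_T$, $\mathcal F_t := \mathcal G_T$ for $t > T$. Thus $X = (X_t, t \ge 0)$ is an infinite-horizon process, constant after time $T$, and by construction $V^\phi(X)_t = V^\phi(Y)_t$ for $t \in [0,T]$; in particular $X_T = Y_T$ and $\mathbb E V^\phi(X)_T = \mathbb E V^\phi(Y)_T < \infty$. The constant time $\tau \equiv T$ is an a.s.\ finite stopping time for $(\mathcal F_t)$. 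Hence, once we know that $X$ is a c\`adl\`ag martingale for $(\mathcal F_t)$, Corollary~\ref{c.irt} applied with this $\tau$ yields $\mathbb E \phi(X_T) = \mathbb E V^\phi(X)_T$, which is exactly $\mathbb E \phi(Y_T) = \mathbb E V^\phi(Y)_T$.

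So the work lies in verifying the hypotheses of Corollary~\ref{c.irt} for $X$. First I would check that the extended filtration $(\mathcal F_t)_{t\ge 0}$ satisfies the usual conditions: completeness is inherited from $(\mathcal G_t)$, right-continuity is inherited for $t < T$, holds trivially for $t > T$ since $\mathcal F_t$ is then constant, and at $t = T$ follows from $\bigcap_{s>T}\mathcal F_s = \mathcal G_T = \mathcal F_T$. Next, $X$ is c\`adl\`ag: it agrees with the c\`adl\`ag process $Y$ on $[0,T]$ and is constant, hence right-continuous with left limits, on $[T,\infty)$. Finally, $X$ is a genuine martingale: for $s \le t \le T$ this is the martingale property of $Y$; for $s \le T < t$ one has $\mathbb E[X_t \mid \mathcal F_s] = \mathbb E[Y_T \mid \mathcal G_s] = Y_s = X_s$ because the finite-horizon martingale $Y$ is closed by $Y_T$; and for $T \le s \le t$ both $X_s$ and $X_t$ equal the $\mathcal F_s$-measurable random variable $Y_T$.

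I expect the only delicate point to be the identity $V^\phi(X)_t = V^\phi(Y)_t$ on $[0,T]$, i.e.\ that the stochastic integral $\int_0^t \phi'(X_{s-})\,{\rm d}X_s$ in \eqref{eq:var-def} computed for the extended pair $(X, (\mathcal F_t))$ coincides on $[0,T]$ with the one computed for $(Y, (\mathcal G_t))$. This is the standard fact that stochastic integration localizes on $[0,T]$ and does not see the behaviour of the integrator past $T$ (the integrand $\phi'(X_{s-}) = \phi'(Y_{s-})$ and the integrator agree there); it is recorded in the sentence preceding the statement and may be invoked directly. Once these bookkeeping points are in place, no further estimate is needed, since the substance is entirely carried by Corollary~\ref{c.irt}, and hence ultimately by the $\phi$-isometry of Theorem~\ref{prop:integration}. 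An equivalent route, bypassing Corollary~\ref{c.irt}, is to note that $X$ is frozen after $T$, so $V^\phi(X)_t = V^\phi(X)_T$ for every $t \ge T$ and, by monotonicity of $V^\phi$ (Corollary~\ref{c.pBv}), $\mathbb E V^\phi(X)_t < \infty$ for all $t \ge 0$; then condition~(2) of Theorem~\ref{prop:integration} holds and \eqref{eq:ito-isometry} evaluated at $t = T$ gives the claim.
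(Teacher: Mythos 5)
Your proposal is correct and takes essentially the same approach as the paper: the paper's proof applies the $\phi$-isometry of Theorem~\ref{prop:integration} to the extension $X$ of $Y$ frozen after time $T$, which is verbatim your closing ``equivalent route,'' and your main route through Corollary~\ref{c.irt} with the constant stopping time $\tau\equiv T$ is the same isometry in thin disguise. The bookkeeping you supply --- usual conditions for the extended filtration, the martingale property of $X$ via closedness by $Y_T$, and the localization of the stochastic integral on $[0,T]$ --- is exactly what the paper leaves implicit in the paragraph preceding the statement.
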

\begin{proof}
The martingale $X$ constructed above is c\`{a}dl\`{a}g. The result follows from Theorem~\ref{prop:integration}, since $\phi(Y_T)=\phi(X_T)$ and $V^\phi(X)_T = V^\phi(Y)_T$.
\end{proof}

\subsection{Connection to quadratic variation}\label{s.oqv}
For $\phi(x)=x^2$,
we get the usual \textit{quadratic variation} of the semimartingale $X$,
\[[X]_t := [X,X]_t:=X_t^2-\int_0^t 2X_{s-}{\rm d}X_s,\quad t\ge 0,\]
see \cite[II.6]{MR2273672}. Following \cite[II.6]{MR2273672}, we denote by $[X]_t^c$ the continuous part of the increasing function $[X]_t$ which starts from zero, that is, $[X]^c_0 = 0$.
By \cite[Chapter II, p. 70]{MR2273672}, the quadratic variation $[X]_t$ can only jump with $X$:
 \[[X]_t = [X]_t^c + X_0^2 + \sum_{0 < s \le t,\, \Delta X_s\neq 0}(\Delta X_s)^2,\qquad t\ge 0.\]
As usual, for two semimartingales $X$ and $Y$, their quadratic co-variation is
\[[X,Y] :=\frac{1}{4}\left([X+Y] - [X-Y]\right).\]
If $X$ is a pure-jump semimartingale, that is, $[X]^c = 0$, and if  $Y$ is an arbitrary semimartingale,
then by \cite[Chapter II,
Theorem 28]{MR2273672},
\begin{align}\label{eq:covar-jump}
 [X,Y]_t= X_0Y_0 + \sum_{0 < s\leq t,\,  \Delta X_s\neq 0} \Delta X_s \Delta Y_s.
\end{align}

 Let $X$ be a semimartingale and $\phi$ be a $C^1$ moderate Young function.
Let $L^y(X)=\left(L^y_t(X),\;t \ge 0\right)$ be the
local time of $X$ at level $y\in\mathbb R$; see \cite[IV.7]{MR2273672}.
By $\mu_\phi$ we denote the second distributional derivative of $\phi$, which is a measure on $\mathbb R$.
We recall a version of the It\^o--Tanaka--Meyer formula from \cite[Chapter IV, Theorem 71]{MR2273672}:
  \begin{eqnarray}\label{eq:ito-tanaka-meyer}
  \phi(X_t)   &=& \phi(X_0) + \int_0^t \phi'(X_{s-})dX_s + \frac{1}{2}\int_{\mathbb R}L_t^y(X)d\mu_\phi(y)\\
  \nonumber
  &&+ \sum_{0 <s \le t,\, \Delta X_s\neq 0}F_\phi(X_{s-},X_s), \quad t \ge 0.
  \end{eqnarray}
By \eqref{eq:var-def}, we get the following result.
\begin{cor}\label{c.fBv} If $X$ is a semimartingale and $\phi$ is a $C^1$ moderate Young function, then
\begin{equation}
\label{e.fV}
V^\phi(X)_t = \phi(X_0) + \frac{1}{2} \int_{\mathbb R}  L^y_t(X)\,{\rm d}\mu_\phi(y) +\sum_{0< s\leq t,\, \Delta X_s\neq 0} F_\phi(X_{s-},X_s), \quad t \ge 0.
\end{equation}
\end{cor}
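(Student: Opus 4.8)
The plan is to obtain \eqref{e.fV} by directly substituting the Itô--Tanaka--Meyer formula \eqref{eq:ito-tanaka-meyer} into the definition \eqref{eq:var-def} of the Bregman variation. Indeed, the statement is essentially an immediate corollary once the preceding formula \eqref{eq:ito-tanaka-meyer} is available, which is why the author signals ``By \eqref{eq:var-def}, we get the following result.'' So first I would recall that, by Definition~\ref{def-variation-cont},
\[
V^\phi(X)_t = \phi(X_t) - \int_0^t \phi'(X_{s-})\,{\rm d}X_s, \qquad t \ge 0.
\]
Next I would isolate $\phi(X_t)$ from the Itô--Tanaka--Meyer formula \eqref{eq:ito-tanaka-meyer} and subtract the stochastic integral $\int_0^t \phi'(X_{s-})\,{\rm d}X_s$ from both sides. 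The two integral terms $\int_0^t \phi'(X_{s-})\,{\rm d}X_s$ cancel, leaving exactly
\[
V^\phi(X)_t = \phi(X_0) + \frac{1}{2}\int_{\mathbb R} L^y_t(X)\,{\rm d}\mu_\phi(y) + \sum_{0 < s \le t,\, \Delta X_s \neq 0} F_\phi(X_{s-}, X_s),
\]
which is \eqref{e.fV}.

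Before declaring this complete, I would verify that the hypotheses of \eqref{eq:ito-tanaka-meyer} are genuinely met under the assumptions of the corollary. The Itô--Tanaka--Meyer formula as quoted from \cite[Chapter IV, Theorem 71]{MR2273672} applies to a general semimartingale $X$ together with a convex function $\phi$ whose second distributional derivative $\mu_\phi$ is a (locally finite) measure; since $\phi$ is assumed to be a $C^1$ moderate Young function, $\phi'$ is continuous and $\mu_\phi$ is well defined as the measure associated with the monotone function $\phi'$, so the formula is legitimately available. I would also note that the stochastic integral $\int_0^t \phi'(X_{s-})\,{\rm d}X_s$ appearing in \eqref{eq:var-def} and the one appearing in \eqref{eq:ito-tanaka-meyer} are literally the same object (same integrand $\phi'(X_{s-})$, which is càglàd and adapted, and the same integrator $X$), so the cancellation is exact rather than merely formal.

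The main (and essentially only) obstacle is bookkeeping at the origin: one must confirm that the convention \eqref{e:zerominus}, namely $X_{0-} := 0$, and the resulting convention on the lower endpoint of integration (Remark~\ref{rem:integralnotation}) are used consistently in both \eqref{eq:var-def} and \eqref{eq:ito-tanaka-meyer}, so that the initial terms match. In particular the constant $\phi(X_0)$ on the right-hand side of \eqref{e.fV} is precisely the $\phi(X_0)$ term carried over from \eqref{eq:ito-tanaka-meyer}, and no spurious contribution from $s=0$ enters the jump sum or the local-time integral beyond what is already recorded there; with $\phi'(X_{0-}) = \phi'(0) = 0$ this is automatic. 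Since everything else is a one-line substitution, I expect the proof to be very short, with the only care needed being the identification of the integral terms and the endpoint conventions.
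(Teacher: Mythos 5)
Your proposal is correct and is precisely the paper's argument: the corollary follows by substituting the It\^o--Tanaka--Meyer formula \eqref{eq:ito-tanaka-meyer} into the definition \eqref{eq:var-def}, whereupon the stochastic integrals cancel, which is exactly what the paper signals with ``By \eqref{eq:var-def}, we get the following result.'' Your additional checks on the applicability of \eqref{eq:ito-tanaka-meyer} and on the endpoint convention $X_{0-}:=0$ are sound and only make explicit what the paper leaves tacit.
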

In Theorem~\ref{t.itm-smooth} below, for $\phi$ regular enough,
we reformulate \eqref{e.fV} using the quadratic variation instead of local times and the following result (\cite[Corollary 1, p. 219]{MR2273672}).
\begin{lem}\label{l.ltqv}
Let $X$ be a semimartingale and let $g$ be either nonnegative or bounded.
 Then,
 \[ \int_{\mathbb R}g(y)L_t^y(X){\rm d}y = \int_{0}^t g(X_{s-}){\rm d}[X]^c_s, \quad t\ge 0. \]
\end{lem}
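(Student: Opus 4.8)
The plan is to first prove the identity for $g=\phi''$ with $\phi\in C^2$ convex, by comparing two expansions of $\phi(X_t)-\phi(X_0)$, and then to pass to general $g$ by a monotone-class argument. As a preliminary reduction, note that $[X]^c$ is a \emph{continuous} increasing process, so it charges neither any fixed time nor the (at most countable) set of jump times of $X$; since $X_{s-}=X_s$ off that set, we have $\int_0^t g(X_{s-})\,{\rm d}[X]^c_s=\int_0^t g(X_s)\,{\rm d}[X]^c_s$, and I may use whichever form is convenient.

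For the core step, fix a convex $\phi\in C^2$, so that $\mu_\phi$ has density $\phi''$. On one hand, the classical It\^o formula for $C^2$ functions reads
\[\phi(X_t)=\phi(X_0)+\int_0^t\phi'(X_{s-})\,{\rm d}X_s+\tfrac12\int_0^t\phi''(X_{s-})\,{\rm d}[X]^c_s+\sum_{0<s\le t,\,\Delta X_s\neq0} F_\phi(X_{s-},X_s),\]
where the last sum is the standard jump term $\sum[\phi(X_s)-\phi(X_{s-})-\phi'(X_{s-})\Delta X_s]$, equal to $\sum F_\phi(X_{s-},X_s)$ by \eqref{d.Bd}. On the other hand, the It\^o--Tanaka--Meyer formula \eqref{eq:ito-tanaka-meyer} expands the same quantity with identical first-order and jump terms but second-order term $\tfrac12\int_{\mathbb R}L^y_t(X)\,{\rm d}\mu_\phi(y)$. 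Subtracting, the stochastic integrals and jump sums cancel and, since ${\rm d}\mu_\phi(y)=\phi''(y)\,{\rm d}y$, we are left with
\[\int_{\mathbb R}L^y_t(X)\,\phi''(y)\,{\rm d}y=\int_0^t\phi''(X_{s-})\,{\rm d}[X]^c_s,\]
which is the assertion for $g=\phi''$. Given any nonnegative $g\in C_c(\mathbb R)$, the function $\phi(x):=\int_0^x\int_0^u g(v)\,{\rm d}v\,{\rm d}u$ is $C^2$ and convex with $\phi''=g$, so the identity holds for all such $g$.

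It remains to drop the smoothness and sign restrictions. As $L^y_t(X)\ge0$ and ${\rm d}[X]^c\ge0$, the family of nonnegative Borel functions $g$ satisfying the identity is closed under increasing pointwise limits by the Monotone Convergence Theorem applied to both sides, and it contains all nonnegative $g\in C_c(\mathbb R)$; a monotone-class argument then extends it to every nonnegative Borel $g$. Taking $g\equiv1$ yields $\int_{\mathbb R}L^y_t(X)\,{\rm d}y=[X]^c_t<\infty$, whence for bounded Borel $g$ we may split $g=g^+-g^-$ and use $\int_{\mathbb R}|g|\,L^y_t(X)\,{\rm d}y\le\|g\|_\infty[X]^c_t<\infty$ together with linearity to conclude. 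The main point to secure is that the comparison in the core step is not circular: the classical and It\^o--Tanaka--Meyer expansions are produced by genuinely different routes (ordinary stochastic calculus for smooth $\phi$ versus Tanaka's formula for $x\mapsto|x-y|$ and integration in $y$), so their agreement away from the second-order term is a real identity rather than a tautology; a secondary technical requirement is the joint measurability and regularity of $(s,y)\mapsto L^y_s(X)$ that underlies Fubini and the monotone-class step, supplied by the construction of semimartingale local time in \cite[Chapter IV]{MR2273672}.
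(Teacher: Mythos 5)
Your proof is correct, but it takes a genuinely different (more self-contained) route than the paper: the paper disposes of the lemma in two lines, citing \cite[Corollary 1, p.~219]{MR2273672} for bounded $g$ and upgrading to nonnegative $g$ by monotone convergence, whereas you re-derive that occupation-density formula from scratch by subtracting the classical It\^o formula from the Meyer--It\^o expansion --- which is, in essence, Protter's own proof of the corollary the paper cites. The argument is sound: the two jump sums cancel legitimately because $F_\phi\ge 0$ and each formula exhibits the sum as a.s.\ finite; your monotone-class step works since, for fixed $t$, both sides of the identity are finite measures in $g$ (taking $g\uparrow 1$ along nonnegative $C_c$ functions gives total mass $[X]^c_t<\infty$ on both sides), so agreement on nonnegative $C_c$ functions extends to all nonnegative Borel $g$, and the split $g=g^+-g^-$ for bounded $g$ is then justified by the finiteness bound you state. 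One caution on references: you invoke \eqref{eq:ito-tanaka-meyer} as stated in the paper, but the paper records that formula under the standing hypothesis that $\phi$ is a $C^1$ moderate Young function, while your auxiliary $\phi(x)=\int_0^x\int_0^u g(v)\,{\rm d}v\,{\rm d}u$ with $g\in C_c$, $g\ge 0$, is convex and $C^2$ but \emph{not} a Young function (it is not even, and it is affine outside a compact set, so $\phi(\lambda)/\lambda\not\to\infty$); you should cite \cite[Chapter IV, Theorem 70/71]{MR2273672} directly, which holds for arbitrary differences of convex functions --- mathematically harmless, but the in-paper reference does not literally cover your $\phi$. Finally, your argument yields a.s.\ equality for each fixed $t$, which matches the lemma as stated; if one wants the identity a.s.\ simultaneously for all $t$, note that for $g\ge 0$ both sides are nondecreasing in $t$ and the right-hand side is continuous, so equality on rational $t$ upgrades. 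What your route buys is independence from the cited corollary and transparency about where $[X]^c$ enters; what the paper's route buys is brevity.
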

\begin{proof} The proof for bounded  functions $g$ can be found in \cite[Corollary 1, p. 219]{MR2273672}.
The extension to nonnegative $g$ is obtained by the Monotone Convergence Theorem.
\end{proof}

\begin{thm}\label{t.itm-smooth}
If  $\phi$ is a Young function with absolutely continuous derivative, $X$ is a semimartingale,
 and $\tau<\infty$ is a stopping time, then 
\begin{eqnarray}\label{eq:itm-smooth}
   V_\tau^\phi(X)  &=& \phi(X_0)+ \frac{1}{2}\int_0^\tau \phi''(X_{s-}){\rm d}[X]^c_s + \sum_{0<s\le \tau,\, \Delta X_s\neq 0} F_\phi(X_{s-},X_s) \\
\nonumber
&& = \frac{1}{2}\int_0^\tau \phi''(X_{s-}){\rm d}[X]^c_s + \sum_{0\le s\le \tau,\, \Delta X_s\neq 0} F_\phi(X_{s-},X_s).
  \end{eqnarray}
\end{thm}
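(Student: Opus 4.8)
The plan is to specialize the general formula \eqref{e.fV} for $V^\phi(X)$ to the case of an absolutely continuous derivative $\phi'$, trading the local-time integral for an integral against the continuous quadratic variation $[X]^c$, and then to evaluate at the random time $\tau$. First I would record that \eqref{e.fV} holds as an identity of c\`adl\`ag processes, i.e.\ almost surely simultaneously for all $t\ge 0$: it is obtained by subtracting the stochastic integral in \eqref{eq:var-def} from the It\^o--Tanaka--Meyer formula \eqref{eq:ito-tanaka-meyer}, and every term there ($\phi(X_t)$, the stochastic integral, the local-time integral, and the increasing jump sum) is c\`adl\`ag in $t$. I stress that only the convexity of $\phi$ and the $C^1$ property needed to define $V^\phi(X)$ enter at this stage, so the moderateness of $\phi$ is not required, consistently with the hypotheses of the theorem.

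Next I would identify the measure $\mu_\phi$. Since $\phi'$ is absolutely continuous, $\phi'(b)-\phi'(a)=\int_a^b\phi''(y)\,{\rm d}y$ for all $a<b$, so the second distributional derivative is $\mu_\phi({\rm d}y)=\phi''(y)\,{\rm d}y$, where $\phi''\ge 0$ a.e.\ by convexity. Hence $\tfrac12\int_{\mathbb R}L_t^y(X)\,{\rm d}\mu_\phi(y)=\tfrac12\int_{\mathbb R}\phi''(y)L_t^y(X)\,{\rm d}y$, and applying Lemma~\ref{l.ltqv} with the nonnegative function $g=\phi''$ converts this into $\tfrac12\int_0^t\phi''(X_{s-})\,{\rm d}[X]^c_s$. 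Substituting into \eqref{e.fV} and then evaluating at $t=\tau$—legitimate because $\tau<\infty$ a.s.\ and both sides are c\`adl\`ag in $t$—yields the first equality in \eqref{eq:itm-smooth}.

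It remains to pass to the second equality, which merely absorbs the term $\phi(X_0)$ into the jump sum by letting it start at $s=0$. Here I would invoke the convention $X_{0-}=0$ from \eqref{e:zerominus} together with the defining properties of a Young function: $\phi(0)=0$ and, from evenness and $\lim_{\lambda\to 0^+}\phi(\lambda)/\lambda=0$, also $\phi'(0)=0$. Then, by \eqref{d.Bd}, $F_\phi(X_{0-},X_0)=F_\phi(0,X_0)=\phi(X_0)-\phi(0)-\phi'(0)X_0=\phi(X_0)$. Thus the $s=0$ contribution to $\sum_{0\le s\le\tau,\,\Delta X_s\ne 0}F_\phi(X_{s-},X_s)$ equals $\phi(X_0)$ precisely (and when $X_0=0$ the term is absent, in agreement with $\phi(X_0)=0$), so enlarging the summation range to include $s=0$ absorbs $\phi(X_0)$ and produces the second line of \eqref{eq:itm-smooth}.

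The only genuinely non-formal points, and thus where I would be most careful, are the passage to the random time $\tau$—which relies on \eqref{e.fV} being a pathwise identity of processes rather than a fixed-time identity—and the identification $\mu_\phi({\rm d}y)=\phi''(y)\,{\rm d}y$; once these are secured, together with the nonnegativity of $\phi''$ that activates the correct branch of Lemma~\ref{l.ltqv}, the remainder is bookkeeping.
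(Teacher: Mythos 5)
Your proposal is correct and follows essentially the same route as the paper's proof: identify $\mu_\phi({\rm d}y)=\phi''(y)\,{\rm d}y$ from the absolute continuity of $\phi'$, convert the local-time integral in \eqref{e.fV} into $\tfrac12\int_0^\cdot\phi''(X_{s-})\,{\rm d}[X]^c_s$ via Lemma~\ref{l.ltqv}, and absorb $\phi(X_0)$ into the jump sum using $F_\phi(0,X_0)=\phi(X_0)$. Your added care about the pathwise (indistinguishability) character of \eqref{e.fV} when evaluating at the stopping time $\tau$, about $\phi'(0)=0$, and about moderateness of $\phi$ not being needed here only makes explicit what the paper leaves implicit.
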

\begin{proof}
We have $\phi'(b)-\phi'(a)=\int_a^b \phi''(t){\rm d}t$, $a,b \in \mathbb R_+$, where $\phi''\ge 0$ almost everywhere, so
\[
\int_{\mathbb R}L_t^y(X){\rm d}\mu_\phi(y)= \int_{\mathbb R}L_t^y(X)\phi''(y){\rm d}y = \int_0^t \phi''(X_{s-}){\rm d}[X]_s^c,\quad t \in \mathbb R_+,
\]
by Lemma~\ref{l.ltqv}.
Using \eqref{e.fV}, we get the first equality. Since $\phi(0)=0$, $F(0,X_0)=\phi(X_0)$, which gives the second equality.
\end{proof}
\begin{cor}\label{c.t-} In the setting of Theorem~\ref{t.itm-smooth}, $V^\phi(X)_{t}=V^\phi(X)_{t-}+F_\phi(X_{t-},X_{t}),\quad t>0.$
\end{cor}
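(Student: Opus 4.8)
The plan is to read the jump of $V^\phi(X)$ directly off the pathwise representation \eqref{eq:itm-smooth}. Since a deterministic time $t>0$ is an a.s.\ finite stopping time, Theorem~\ref{t.itm-smooth} applies with $\tau=t$ and gives
$$V^\phi(X)_t = \phi(X_0) + \tfrac{1}{2}\int_0^t \phi''(X_{s-})\,{\rm d}[X]^c_s + \sum_{0<s\le t,\, \Delta X_s\neq 0} F_\phi(X_{s-},X_s).$$
I would handle the three summands separately. The first is constant in $t$, so it has no jump. The second is \emph{continuous} in $t$: the integrator $[X]^c$ is by construction a continuous increasing process, so $\int_0^t\phi''(X_{s-})\,{\rm d}[X]^c_s - \int_0^{t-}\phi''(X_{s-})\,{\rm d}[X]^c_s = \phi''(X_{t-})\,([X]^c_t-[X]^c_{t-})=0$; its finiteness on $[0,t]$ is exactly what \eqref{eq:itm-smooth} (via Lemma~\ref{l.ltqv}) guarantees. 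Hence only the jump sum can contribute.

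For the third summand, set $J_t := \sum_{0<s\le t,\, \Delta X_s\neq 0} F_\phi(X_{s-},X_s)$, a nondecreasing c\`adl\`ag process. Its left limit is $J_{t-} = \sum_{0<s<t,\, \Delta X_s\neq 0} F_\phi(X_{s-},X_s)$, so $J_t - J_{t-} = F_\phi(X_{t-},X_t)\mathbf 1_{\Delta X_t\neq 0}$. Combining the three contributions yields $V^\phi(X)_t - V^\phi(X)_{t-} = F_\phi(X_{t-},X_t)\mathbf 1_{\Delta X_t\neq 0}$. Finally, when $\Delta X_t = 0$ one has $X_{t-}=X_t$, whence $F_\phi(X_{t-},X_t)=F_\phi(X_t,X_t)=0$ by \eqref{d.Bd}; thus the indicator is superfluous and the asserted identity holds for every $t>0$.

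Alternatively, and perhaps more transparently, I could bypass \eqref{eq:itm-smooth} and compute the jump from the definition $V^\phi(X)=\phi(X)-\phi'(X_-)\cdot X$. The jump of $\phi(X)$ at $t$ is $\phi(X_t)-\phi(X_{t-})$, while the jump of the stochastic integral with c\`agl\`ad integrand $\phi'(X_-)$ is $\phi'(X_{t-})\,\Delta X_t$, by the standard rule $\Delta(Y\cdot X)=Y\,\Delta X$ (see Appendix~\ref{a.si}). Subtracting gives $\Delta V^\phi(X)_t = \phi(X_t)-\phi(X_{t-})-\phi'(X_{t-})(X_t-X_{t-}) = F_\phi(X_{t-},X_t)$, which is the claim.

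The only genuinely delicate point is the continuity in $t$ of the $[X]^c$-integral, and this rests solely on $[X]^c$ being continuous; once it is in place, the result reduces to the elementary facts that an increasing pure-jump sum has jump equal to its $s=t$ term and that $F_\phi$ vanishes on the diagonal. I therefore expect no substantive obstacle beyond this book-keeping.
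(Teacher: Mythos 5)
Your first argument is precisely the paper's proof: the paper rewrites \eqref{eq:itm-smooth} with the $s=t$ term of the jump sum split off (its display \eqref{eq:itm-smooth-2}) and concludes ``since ${\rm d}[X]^c$ has no atoms'' --- which is exactly your continuity-of-the-$[X]^c$-integral step combined with the observation that $F_\phi$ vanishes on the diagonal, so on that route you and the paper agree in every detail. Your alternative second argument, however, is a genuinely different and in fact stronger route: computing $\Delta V^\phi(X)_t$ directly from the definition $V^\phi(X)=\phi(X)-\phi'(X_-)\cdot X$, using that $\phi$ is continuous (so $\lim_{s\uparrow t}\phi(X_s)=\phi(X_{t-})$) together with the jump rule $\Delta(H\cdot X)_t=H_t\,\Delta X_t$ for stochastic integrals with c\`agl\`ad integrands (see \cite[Chapter II]{MR2273672}). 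This bypasses the It\^o--Tanaka--Meyer machinery entirely, so it needs only $\phi\in C^1$ rather than the absolute continuity of $\phi'$ assumed in Theorem~\ref{t.itm-smooth}, and it yields the identity for an arbitrary semimartingale without any appeal to local times or to $[X]^c$; what the paper's route buys instead is that the conclusion drops out for free once the explicit pathwise formula \eqref{eq:itm-smooth} is in hand, with no further facts about stochastic integrals required. Both arguments are correct; the only cosmetic caveat in your first route is that the difference of the two $[X]^c$-integrals should be read as the integral of $\phi''(X_{s-})$ over the singleton $\{t\}$ against the atomless measure ${\rm d}[X]^c$, rather than literally as $\phi''(X_{t-})\left([X]^c_t-[X]^c_{t-}\right)$, but the conclusion is the same.
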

\begin{proof}Clearly,
   \begin{align}\label{eq:itm-smooth-2}
   V_t^\phi(X)
&= \phi(X_0)+\frac{1}{2}\int_0^t \phi''(X_{s-}){\rm d}[X]^c_s + \sum_{0<s<t,\, \Delta X_s\neq 0} F_\phi(X_{s-},X_s)+F_\phi(X_{t-},X_t),
\end{align}
which yields the statement, since ${\rm d}[X]^c$ has no atoms.
\end{proof}

We conclude this section with a result on the $\phi$-variation of a sum of independent martingales, which generalizes the case of quadratic variation. We recall that, given $\phi \in \Delta_2$, $K_\phi$ is (optimal) such that $\phi(2x) \le K_\phi \phi(x)$ for every $x \ge 0$.

\begin{prop}\label{prop:var-of-sum} Assume that $\phi$ is a $C^1$ moderate Young function and $X, Y$ are two independent, c\`adl\`ag $\phi$-integrable martingales with $X_0=Y_0=0$. Then, \[ \frac{1}{2} \mathbb E\left[ V^\phi_t(X) + V^\phi_t(Y) \right] \le \mathbb EV^\phi_t(X+Y) \le \frac{K_\phi}{2} \mathbb E\left[ V^\phi_t(X) + V^\phi_t(Y) \right], \quad t \ge 0.\] If for some $s>0$, $\mathbb EV_s^\phi(X+Y) = \mathbb E[V_s^\phi(X) + V_s^\phi(Y)]$ for all pairs $X, Y$ of independent, bounded c\`adl\`ag martingales with $X_0=Y_0=0$, then $\phi(x) = x^2$.

\end{prop}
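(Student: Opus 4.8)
The plan is to eliminate the variations in favour of single‑time $\phi$‑moments via the $\phi$‑isometry, and then to argue with convexity, $\Delta_2$, and independence. First I would note that $X+Y$ is again a c\`adl\`ag martingale with $(X+Y)_0=0$, and record the pointwise estimate
\[
\phi(X_t+Y_t)=\phi\big(\tfrac12(2X_t)+\tfrac12(2Y_t)\big)\le\tfrac12\phi(2X_t)+\tfrac12\phi(2Y_t)\le\tfrac{K_\phi}{2}\big(\phi(X_t)+\phi(Y_t)\big),
\]
which uses only convexity of $\phi$, evenness, and \eqref{eq:delta-dwa}. Taking expectations shows $X+Y$ is $\phi$‑integrable, so Theorem~\ref{prop:integration} applies to each of $X$, $Y$, $X+Y$, and by \eqref{eq:ito-isometry} the claim reduces to
\[
\tfrac12\big(\mathbb E\phi(X_t)+\mathbb E\phi(Y_t)\big)\le\mathbb E\phi(X_t+Y_t)\le\tfrac{K_\phi}{2}\big(\mathbb E\phi(X_t)+\mathbb E\phi(Y_t)\big).
\]
The right‑hand inequality is just the expectation of the displayed bound (independence is not used here), so only the lower bound remains.

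For the lower bound I would condition. Since $\phi$ is a Young function, $\phi$‑integrability forces $X_t,Y_t\in L^1$, and the martingale property with $X_0=Y_0=0$ gives $\mathbb EX_t=\mathbb EY_t=0$. Conditioning on $\sigma(X_t)$ and using independence to freeze $X_t$, Jensen's inequality for the convex $\phi$ gives $\mathbb E[\phi(X_t+Y_t)\mid X_t]\ge\phi(X_t+\mathbb EY_t)=\phi(X_t)$, whence $\mathbb E\phi(X_t+Y_t)\ge\mathbb E\phi(X_t)$; symmetrically $\mathbb E\phi(X_t+Y_t)\ge\mathbb E\phi(Y_t)$, and averaging the two yields the lower bound. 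This settles the two‑sided inequality.

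For the rigidity statement I would first feed the hypothesis arbitrary test variables. Given independent, bounded, mean‑zero $U,V$ on a product space, let the filtration be trivial on $[0,s)$ and equal to the completion of $\sigma(U,V)$ on $[s,\infty)$, and set $X_t:=\mathbb E[U\mid\mathcal F_t]$, $Y_t:=\mathbb E[V\mid\mathcal F_t]$. These are bounded c\`adl\`ag martingales with $X_0=Y_0=0$, independent because $U\perp V$, with $X_s=U$ and $Y_s=V$. The $\phi$‑isometry turns the assumed equality into $\mathbb E\phi(U+V)=\mathbb E\phi(U)+\mathbb E\phi(V)$ for all such $U,V$. Specializing to $U=\pm a$ and $V=\pm b$, each value with probability $\tfrac12$, and using that $\phi$ is even, this becomes the parallelogram identity $\phi(a+b)+\phi(a-b)=2\phi(a)+2\phi(b)$ for all $a,b\in\mathbb R$. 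The continuous even solutions with $\phi(0)=0$ are exactly $\phi(x)=\phi(1)x^2$ (the identity gives $\phi(na)=n^2\phi(a)$, hence $\phi(qx)=q^2\phi(x)$ for $q\in\mathbb Q$, and continuity extends this to $\mathbb R$), i.e. $\phi$ is a positive multiple of $x^2$; after normalizing $\phi(1)=1$, $\phi(x)=x^2$.

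I expect the inequalities to be routine once the isometry is in hand, and the real work to lie in the converse. The two delicate points there are (i) producing, for every bounded mean‑zero law, an honest bounded c\`adl\`ag martingale on a filtration satisfying the usual conditions whose time‑$s$ value realizes that law, so that the martingale hypothesis can be probed by arbitrary distributions; and (ii) passing from the probabilistic identity to the parallelogram law and invoking its quadratic‑form solution. The latter is the crux: it is precisely the statement that among moderate Young functions only (multiples of) $x^2$ make $\phi$‑variation additive over independent summands.
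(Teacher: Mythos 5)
Your proof is correct and follows essentially the same route as the paper's: reduce via the $\phi$-isometry of Theorem~\ref{prop:integration} to moments of $X_t$, $Y_t$, $X_t+Y_t$; obtain the lower bound by freezing one variable (independence) and applying Jensen with centering, the upper bound by convexity plus $\Delta_2$; and for the rigidity claim construct two-point Rademacher martingales jumping at time $s$, which yields the parallelogram identity $\phi(x+y)+\phi(x-y)=2\phi(x)+2\phi(y)$ whose continuous solutions are quadratic. Your extra care about the multiplicative constant (the parallelogram law a priori gives $\phi(x)=\phi(1)x^2$, so one normalizes $\phi(1)=1$) is a small point the paper's proof glosses over, but otherwise the arguments coincide.
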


\begin{proof}

By the $\phi-$isometry, $\mathbb EV^\phi_t(X) = \mathbb E\phi(X_t)$, $\mathbb EV^\phi_t(Y) = \mathbb E\phi(Y_t)$ and $\mathbb EV^\phi_t(X+Y) = \mathbb E\phi(X+Y)$. Hence, it suffices to prove \[ \frac{1}{2} \mathbb E\left[ \phi(X_t) + \phi(Y_t) \right] \le \mathbb E\phi(X_t+Y_t) \le \frac{K_\phi}{2} \mathbb E\left[ \phi(X_t) +\phi(Y_t) \right], \quad t \ge 0.\]
Since $\phi$ is convex and $X,Y$ are independent and centered, we obtain \[ \mathbb E\phi(X_t+Y_t) = \int \mathbb E\phi(X_t + y) {\rm d}\mu_{Y_t}(y) \ge \int \phi( \mathbb E[X_t] +y) {\rm d}\mu_{Y_t}(y) = \int \phi(y){\rm d}\mu_{Y_t}(y) = \mathbb E\phi(Y_t).\] We proceed analogously with $\mathbb E\phi(X_t)$ and so the lower bound follows. On the other hand, \[ \phi(X_t+Y_t) = \phi\left( \frac{2X_t}{2} + \frac{2Y_t}{2}\right) \le \frac{1}{2}\left( \phi(2X_t) + \phi(2Y_t) \right) \le \frac{K_\phi}{2}\left( \phi(X_t) + \phi(Y_t)\right).\] So, taking expectation of both sides, we get the upper bound.

\smallskip 

Now, assume that $\mathbb EV_s^\phi(X+Y) = \mathbb E[V_s^\phi(X) + V_s^\phi(Y)]$ holds for some $s>0$ and all pairs $X,Y$ of independent, bounded c\`adl\`ag martingales starting from $0$. Then $\mathbb E\phi(X_s+Y_s) = \mathbb E\phi(X_s) + \mathbb E\phi(Y_s)$, too. Let $x,y \in \mathbb R$ and let $\varepsilon_1,\varepsilon_2$ be independent Rademacher random variables. Define c\`adl\`ag martingales $X$,$Y$ by the formula $X_t = 0 = Y_t$ for $t<s$ and $X_t = x \varepsilon_1$, $Y_t = y \varepsilon_2$ for $t \ge s$. Then, by the symmetry of $\phi$, \[ \mathbb E\phi(X_s+Y_s) = \mathbb E\phi(x\varepsilon_1 + y\varepsilon_2) = \frac{1}{2}\left(\phi(x+y) + \phi(x-y)\right). \] Of course, \[ \mathbb E[\phi(X_s) + \phi(Y_s)] = \phi(x) + \phi(y).\] Hence, $\phi$ must satisfy the functional equation $\phi(x+y) + \phi(x-y) = 2\phi(x) + 2\phi(y)$, which in the class of continuous functions only holds for the quadratic function $\phi(x)=x^2$. \end{proof}

In the next section we present applications of \eqref{e.fV}, \eqref{eq:itm-smooth},
and the $\phi$-isometry to recent problems of harmonic analysis.

\section{Applications}\label{s.a}
 In this section we present applications of the $\phi$-isometry to analysis of semigroups and harmonic functions of some Markov processes. Below, we assume that $\phi$ is a $C^1$ moderate Young function (in Theorem~\ref{th:Hardy-general}, we additionally assume that $\phi'$ is absolutely continuous). 
\subsection{Parabolic  Hardy--Stein identity}\label{s.HS}

 We shall use the $\phi$-variation to prove the so-called parabolic Hardy--Stein identity in $L^\phi(\Rd)$ for semigroups of L\'{e}vy processes satisfying certain mild assumptions; see Theorem~\ref{th:Hardy-general}. The identity has been previously proved when $\phi(t)=t^p$ with $p\in (1,\infty)$ for some pure-jump symmetric (see Ba\~{n}uelos, Bogdan and Luks \cite{MR3556449}) and nonsymmetric (see Ba\~{n}uelos and Kim \cite{MR3994925}) L\'{e}vy processes as well as  for the Brownian motion \cite{2023+KB-MG-KPP}.
So the novelty of Theorem~\ref{th:Hardy-general} is that it can handle L\'{e}vy processes that have nontrivial \textit{both} diffusion \textit{and} jump parts and we allow rather general functions $\phi$. We expect that our method should extend to other Markovian semigroups; see Gutowski and Kwa\'snicki \cite{bib:MG-MK} and Gutowski \cite{MR4622410} for results in the case of $\phi(t)=t^p$.
\subsubsection{Assumptions and preliminaries}\label{sec:Levy}
We consider a (L\'evy) measure $\nu$ on  $\Rd$
satisfying
	\[ \int_{\Rd}\left(|z|^2\wedge 1\right)\nu({\rm d}z) <\infty.\]
Assume that $\nu$ is symmetric: $\nu(A)=\nu(-A)$, $A\subset \Rd$. Let
$Q$ be a real symmetric nonnegative-definite $d\times d$ matrix. Its symmetric, nonnegative-definite square root will be denoted $\mathcal R=(r_{ij})_{i,j=1}^d:=\sqrt Q$. Thus, $Q = \mathcal R^2$. Let
	\begin{equation*}
		\psi(\xi): = \frac{1}{2} \langle Q\xi, \xi\rangle + \int_{\Rd} \left(1-\cos\langle\xi, x\rangle\right)\nu({\rm d}x),\quad \xi\in \Rd,
	\end{equation*}
where $\langle\xi , x\rangle$ is the Euclidean scalar product in $\Rd$.
There exists an $\Rd$-valued L\'evy process $(Z_t,\, t\ge 0)$ satisfying the following L\'evy-Khinchine formula:
\[\mathbb E_x{\rm e}^{i \langle \xi, Z_t \rangle}={\rm e}^{i \langle \xi, x \rangle -t\psi(\xi)},\quad \xi\in\Rd, \,t\ge 0,\, x\in \Rd.\]
Here and below, we adopt the Markovian notation, with probabilities $\mathbb P_x$ for the process 
\textit{starting from} $x\in\Rd$, 
the corresponding expectations $\mathbb E_x$, and the \textit{usual filtration} $\mathcal F^Z_t$, $t\ge 0$;
see \cite[Theorem I.31]{MR2273672} or
Sato \cite{MR1739520}. Recall that $Z$ is a Hunt process, that is, $Z$ is c\`adl\`ag, quasi-left continuous, and strong Markov with right-continuous complete filtration, see \cite[IV.7]{MR850715} or \cite[Chapter 3]{MR648601}.
To simplify technical considerations, apart from the symmetry of $\nu$, in what follows, we also assume the Hartman-Wintner condition:
\begin{equation}\label{eq:HaWi}\tag{HW}
\lim_{|\xi|\to \infty}
\frac{\psi(\xi)}{\log|\xi|}=\infty.
\end{equation}
 We let
	\begin{equation}
		\label{eq:Fi}
		p_t(x):=(2\pi)^{-d}\int_{\Rd} {\rm e}^{-i \langle \xi, x\rangle}{\rm e}^{-t\psi(\xi)}\,{\rm d}\xi,\quad t>0, \ x\in \Rd.
	\end{equation}
	We see that $p_t$ is symmetric; $p_t(-x)=p_t(x)$. It is a convolution semigroup of probability densities, i.e., $p_t*p_s=p_{t+s}$. By \eqref{eq:Fi} and \eqref{eq:HaWi}, $\|p_t\|_\infty = p_t(0) \to 0$ as $t\to \infty$. Furthermore, $p_t(x)$ is smooth (i.e., $C^\infty$) in $x$, with all the derivatives bounded and integrable; see \cite[Theorem 2.1]{MR3010850}.
We let $p_t(x,y):=p_t(y-x)$. Of course, $p_t(x,y)=p_t(y,x)$
and
\begin{equation}\label{e.sk}
\int_{\mathbb R^d}p_t(x,y){\rm d}x = \int_{\mathbb R^d}p_t(x,y){\rm d}y = \int_{\mathbb R^d}p_t(x){\rm d}x = 1.
\end{equation}
The function $(t,x,y) \mapsto p_t(x,y)$ is a transition density of $Z,$ the Hunt process associated with $\Psi$. In particular,
the semigroup of $Z$ is given by
\begin{equation}\label{eq:semigroup-def}
P_t f(x) := \mathbb E_xf(Z_t)= \int_{\mathbb R^d}f(y)p_t(x,y){\rm d}y = f*p_t(x), \quad t>0, \  x \in \mathbb R^d,
\end{equation}
for nonnegative or integrable function $f$. Moreover, we let $P_0f:=f$.
If $f \in L^\phi(\mathbb R^d)$, then $P_tf\in L^\phi(\mathbb R^d)$, in fact, the operators $P_t$ are contractions on $L^\phi$. Indeed, for $t>0$, $x \in \Rd$,
\begin{equation}\label{e.exfzt}
\mathbb E_x\phi(f(Z_t)) = \int_{\Rd} \phi(f(y))p_t(x,y){\rm d}y \le p_t(0) \int_{\Rd}\phi(f(y)){\rm d}y < \infty,
\end{equation}
hence, by Jensen's inequality and \eqref{eq:semigroup-def},
 \begin{align}
\int_{\mathbb R^d}\phi(P_tf(x)){\rm d}x &= \int_{\mathbb R^d}\phi(\mathbb E_xf(Z_t)){\rm d}x \leq \int_{\mathbb R^d}\mathbb E_x\phi(f(Z_t)){\rm d}x 
 = \int_{\mathbb R^d} \phi(f(y)){\rm d}y.\label{e.Y}
 \end{align}
 The last equality in \eqref{e.Y} follows from Tonelli's theorem and \eqref{e.sk}, which indeed give
\begin{equation}\label{eq:later}
\int_{\mathbb R^d}\int_{\mathbb R^d} \phi(f(y))p_t(x,y){\rm d}x{\rm d}y=\int_{\mathbb R^d} \phi(f(y)){\rm d}y.
\end{equation}
Of course, \eqref{e.Y} remains true for $t=0$.
Considering $f/\|f\|_\phi$ with $\|f\|_\phi>0$, by \eqref{eq:orlicz2} we get
\begin{equation}\label{eq:contraction} \|P_t f\|_\phi\leq \|f\|_\phi, \quad t \ge 0. \end{equation}

We say that function $g:I \times U \to \mathbb R$ is in class $C^{1,2}(I \times U) = C^{1,2}$ (with $I \subset \mathbb R, U \subset \mathbb R^d$ open), if for every $t \in I$ the function $U \ni x \mapsto g(t,x)$ is $C^2$ and for every $x \in U$ the function $I \ni t \mapsto g(t,x)$ is $C^1$.

\begin{lem}\label{l.rPtf}
The map $(0,\infty) \times \Rd\ni (t,x) \mapsto P_tf(x)$ is $C^{1,2}$ for $f\in L^\phi(\mathbb R^d)$.
\end{lem}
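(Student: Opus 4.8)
The plan is to exploit the convolution structure $P_tf=f*p_t$, which is valid because the symmetry of the kernel gives $p_t(x,y)=p_t(y-x)=p_t(x-y)$, and then to differentiate under the integral sign, separately in $x$ and in $t$, in accordance with the separate-variable definition of $C^{1,2}$ recalled just above. Since $f$ need not be integrable nor satisfy $\phi\circ f\in L^1$, the first move is to split it by Proposition~\ref{prop:decomposition}(1): write $f=f_1+f_\infty$ with $f_1\in L^1(\Rd)$ and $f_\infty\in L^\infty(\Rd)$. By linearity of $P_t$ it suffices to treat $f_1$ and $f_\infty$ separately, and the two cases will call for two different dominating functions.

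The analytic input about the kernel is gathered first. Fix a compact interval $[a,b]\subset(0,\infty)$. From the Fourier representation \eqref{eq:Fi}, the bound $e^{-t\psi(\xi)}\le e^{-a\psi(\xi)}$ for $t\ge a$ (recall $\psi\ge0$), and the Hartman--Wintner condition \eqref{eq:HaWi}, which forces $|\xi|^{k}e^{-a\psi(\xi)}$ and $\psi(\xi)e^{-a\psi(\xi)}$ to be integrable for every $k$, differentiating the integrand in \eqref{eq:Fi} yields the uniform pointwise bounds
\[
\sup_{t\in[a,b]}\ \sup_{x\in\Rd}\Big(|\partial_t p_t(x)|+\sum_{|\alpha|\le 2}|\partial^\alpha_x p_t(x)|\Big)<\infty.
\]
In parallel I need the $L^1(\Rd)$-bounds $\sup_{t\in[a,b]}\big(\|\partial_tp_t\|_1+\sum_{|\alpha|\le2}\|\partial^\alpha_xp_t\|_1\big)<\infty$. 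For the spatial derivatives this follows from the semigroup identity $p_t=p_{t-a}*p_a$ and Young's convolution inequality, which give $\|\partial^\alpha_xp_t\|_1=\|p_{t-a}*\partial^\alpha_xp_a\|_1\le\|\partial^\alpha_xp_a\|_1<\infty$ by \cite[Theorem 2.1]{MR3010850}; for the time derivative I use $\partial_tp_t=\mathcal Lp_t$, where $\mathcal L$ is the generator of $Z$, estimating the Gaussian part by $\|\nabla^2p_t\|_1$ and the jump part by $\int(|z|^2\wedge1)\,\nu({\rm d}z)<\infty$ together with the uniform $L^1$-bounds on $p_t,\nabla p_t,\nabla^2p_t$.

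With these bounds the differentiation is routine. For $f_1$, the dominating function for $\partial^\alpha_x[f_1(y)p_t(x-y)]$ (and for $\partial_t$) is $|f_1(y)|$ times the uniform sup-bound above, which is integrable in $y$, so dominated convergence gives the derivatives and their continuity. For $f_\infty$ a pointwise sup-domination is unavailable, so I pass to difference quotients: writing, e.g.\ at fixed $t$, $h^{-1}\big(p_t(x-y+he_i)-p_t(x-y)\big)=\int_0^1\partial_{x_i}p_t(x-y+she_i)\,{\rm d}s$ and applying Tonelli reduces the required bound to $\|\partial_{x_i}p_t\|_1<\infty$, after which dominated convergence applies; the $t$-derivative is handled identically, now invoking $\sup_{t\in[a,b]}\|\partial_tp_t\|_1<\infty$ to dominate over nearby times. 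In each case the resulting derivative is a convolution of an $L^1$ or $L^\infty$ function with an $L^1$ kernel, hence continuous, so $x\mapsto P_tf(x)$ is $C^2$ for every $t$ and $t\mapsto P_tf(x)$ is $C^1$ for every $x$, which is precisely $C^{1,2}$.

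I expect the main obstacle to be the second group of estimates — the locally-uniform-in-$t$ control of the kernel, and in particular the $L^1$-bound on $\partial_tp_t$ — because it cannot be read off directly from the Fourier transform (the symbol $\psi$ need not be smooth in $\xi$ when $\nu$ has a heavy tail), which is what forces one to invoke the semigroup identity and the generator representation. Everything else is dominated convergence, but choosing the dominating functions correctly for the $L^\infty$-component of $f$, without covertly assuming extra integrability of $f$, is the delicate point that the splitting $f=f_1+f_\infty$ and the difference-quotient argument are designed to resolve.
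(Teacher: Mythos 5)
Your proof is correct, but it takes a genuinely different route from the paper's. The paper reduces to $f\in L^p(\Rd)$, $1<p<\infty$, via Proposition~\ref{prop:decomposition}(2) (this is where moderateness of $\phi$ enters), quotes \cite{MR3556449} for spatial $C^2$-smoothness, and gets the time regularity by soft analysis: by Stein's theorem \cite[Theorem III.1, p.~67]{MR252961} the symmetric semigroup $(P_t)$ is holomorphic on $L^p$ in a sector, and composing with the continuous linear functional $h\mapsto (h*p_s)(x)$ on $L^p$ turns strong holomorphy into smoothness of $t\mapsto P_tf(x)$; this argument needs $1<p<\infty$ essentially (analyticity fails at the endpoints), which is exactly why the paper avoids the $L^1+L^\infty$ splitting. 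You instead use Proposition~\ref{prop:decomposition}(1), which holds for \emph{every} Young function, and do hard analysis on the kernel: locally-uniform-in-$t$ sup-bounds from \eqref{eq:Fi} and \eqref{eq:HaWi}, $L^1$-bounds via the semigroup identity and the generator representation $\partial_tp_t=\mathcal Lp_t$ (the spatial $L^1$-bounds at fixed $t$ are in fact already in \cite[Theorem 2.1]{MR3010850}, which the paper cites), followed by dominated convergence for the $L^1$-piece and the Fubini/difference-quotient argument for the $L^\infty$-piece. What your approach buys: it is self-contained, avoids Stein's machinery, and proves the lemma for arbitrary (not necessarily moderate) Young functions; what the paper's buys: brevity and $C^\infty$-regularity in $t$ rather than just $C^1$. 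One soft spot to tighten: in the time-derivative step for $f_\infty$, the uniform bound $\sup_{t\in[a,b]}\|\partial_tp_t\|_1<\infty$ alone does not justify passing to the limit — you also need continuity of $t\mapsto(f_\infty*\partial_tp_t)(x)$, e.g.\ via $L^1$-continuity of $t\mapsto\partial_tp_t$, which follows from $\partial_tp_t=(\mathcal Lp_{a/2})*p_{t-a/2}$ and the approximate-identity property of $(p_\varepsilon)$; since these are exactly the tools you already deploy, this is a routine completion rather than a gap.
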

\begin{proof}

In view of Proposition~\ref{prop:decomposition}(2) and linearity of $P_t$, it suffices to consider $p\in (1,\infty)$ and $f\in L^p(\Rd)$. In this case, the smoothness in the spatial variable was proved in \cite{MR3556449} just above Example 1 therein, so we consider the time derivative.
Denote $q=p/(p-1)$. If $g\in L^q(\Rd)$ and $x\in \Rd$, then $L^p(\Rd)\ni h\mapsto h*g(x)\in \R$ is a continuous linear functional. In particular, this applies  to $g:=p_s$ for each $s>0$.
Of course, $P_t$ is a bounded linear operator, in fact, a contraction on $L^p(\Rd)$.
Furthermore, the operator-valued map $(0,\infty)\ni t\mapsto P_{t}$ extends to a sector in $\mathbb C$ and is holomorphic there in the operator norm; see 
\cite[Theorem III.1, p. 67]{MR252961}.
Moreover, any strongly holomorphic function is weakly holomorphic \cite[p. 80]{MR1157815}.
With this in mind, we let $t>s>0$ and $n=0,1,\ldots$, and we see that
\begin{align*}
\frac{{\rm d}^n}{{\rm d}t^n} \left(P_t f(x)\right)=
\frac{{\rm d}^n}{{\rm d}t^n} \left((P_{t-s} f)*p_s(x)\right)=
\left(\left(\frac{{\rm d}^n}{{\rm d}t^n} P_{t-s}\right) f\right)*p_s(x)
\end{align*}
exists and is finite. Thus, $(0,\infty)\ni t\mapsto P_t f(x)$ is smooth.
\end{proof}

We note in passing that $p_t(x,\cdot)\in L^\phi(\R^d)$ for any $t>0,$ $x\in\mathbb R^d$, and moderate Young function $\phi$. Indeed, $\phi(\lambda)/\lambda$ is non-decreasing in $\lambda \ge 0$, so \begin{equation}\label{eq:p_tintegrability} \int_{\mathbb R^d} \phi(p_t(x,y)){\rm d}y = \int_{\mathbb R^d} \frac{\phi(p_t(x,y))}{p_t(x,y)} p_t(x,y){\rm d}y \le \frac{\phi(p_t(0))}{p_t(0)} \int_{\mathbb R^d} p_t(x,y){\rm d}y < \infty. \end{equation}

For later use, we recall that the process $Z$ can be decomposed as
\begin{equation}\label{eq:decompose}
Z_t=D_t+J_t, \quad t\geq 0,
\end{equation}
where $D$ is the diffusive part and $J$ is the jump part.
Furthermore, the diffusive (continuous) part can be written as $D_t = \mathcal R W_t$, where $W$ is a $d$-dimensional Brownian motion independent of $J$. For instance, if $\nu=0$ and $Q={\rm I}_d$, the identity matrix, then $Z$ is the Brownian motion.

\subsubsection{Parabolic martingale}
The
L\'evy
process
$Z$
is a semimartingale \cite[Section I.4]{MR2273672}.
We also consider the so-called \textit{parabolic martingale} $M$, defined as
\begin{equation}\label{e.dM}
M_t:=
P_{T-t}f(Z_t), \quad 0\le t \le T,
\end{equation}
where $f \in L^1(\mathbb R^d)$ and $T \in (0,\infty)$.
\begin{lem}\label{lem:mart}
If $f \in L^\phi(\Rd)$, then $M$ is a $\phi-$integrable martingale under $\mathbb P_x$, for each $x \in \Rd$.
\end{lem}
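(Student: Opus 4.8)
The plan is to verify the three properties defining a $\phi$-integrable martingale: adaptedness, $\phi$-integrability of each $M_t$, and the conditional-expectation identity $\mathbb E_x[M_t\mid\mathcal F^Z_s]=M_s$ for $0\le s\le t\le T$. The identity will follow from the Markov property of $Z$ combined with the semigroup (Chapman--Kolmogorov) relation $P_aP_b=P_{a+b}$, encoded in $p_a*p_b=p_{a+b}$. Adaptedness is immediate: $P_{T-t}f$ is Borel and $Z_t$ is $\mathcal F^Z_t$-measurable, so $M_t=P_{T-t}f(Z_t)$ is $\mathcal F^Z_t$-measurable.

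For $\phi$-integrability, fix $t\in(0,T]$ and put $g:=P_{T-t}f$. By the contraction property \eqref{eq:contraction}, $g\in L^\phi(\Rd)$, and since $\phi\in\Delta_2$ this means $\int_{\Rd}\phi(g)\,{\rm d}y<\infty$. Applying \eqref{e.exfzt} with $g$ in place of $f$ then yields
\begin{equation*}
\mathbb E_x\phi(M_t)=\int_{\Rd}\phi(g(y))\,p_t(x,y)\,{\rm d}y\le p_t(0)\int_{\Rd}\phi(g(y))\,{\rm d}y<\infty.
\end{equation*}
For $t=0$ the variable $M_0=P_Tf(x)$ is a constant, so this is trivial. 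Finally, because $\phi$ grows superlinearly (there is $\lambda_0$ with $\phi(\lambda)\ge\lambda$ for $\lambda\ge\lambda_0$), $\phi$-integrability forces $M_t\in L^1(\mathbb P_x)$ for every $t\in[0,T]$, so the conditional expectations below are well defined.

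For the martingale property, fix $0\le s\le t\le T$ and set $h:=P_{T-t}f$. Using $M_t\in L^1(\mathbb P_x)$ and the Markov property of $Z$,
\begin{equation*}
\mathbb E_x[M_t\mid\mathcal F^Z_s]=\mathbb E_x[h(Z_t)\mid\mathcal F^Z_s]=P_{t-s}h(Z_s)=P_{t-s}P_{T-t}f(Z_s)=P_{T-s}f(Z_s)=M_s,
\end{equation*}
where the penultimate equality is the semigroup identity $p_{t-s}*p_{T-t}=p_{T-s}$. I expect the only genuine obstacle to be justifying the Markov-property identity and the composition $P_{t-s}P_{T-t}f=P_{T-s}f$ for $f$ that need not lie in $L^1$; here I would invoke the splitting $L^\phi\subset L^1+L^\infty$ from Proposition~\ref{prop:decomposition}(1), apply the classical Markov and Chapman--Kolmogorov identities separately to the $L^1$ and $L^\infty$ parts of $f$, and recombine by linearity. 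Adaptedness and integrability are otherwise routine given the earlier estimates.
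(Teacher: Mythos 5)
Your proof is correct, but it takes a genuinely different route from the paper's. The paper first shows $f(Z_T)\in L^1(\mathbb P_x)$ by a single application of Young's inequality \eqref{eq:Young}, bounding $\mathbb E_x|f(Z_T)|\le\int\phi(f(y))\,{\rm d}y+\int\phi^*(p_T(x,y))\,{\rm d}y$, where the second integral is finite by \eqref{eq:p_tintegrability} applied to the Young function $\phi^*$ (here moderateness of $\phi$ enters through $\phi^*\in\Delta_2$ implicitly being harmless --- \eqref{eq:p_tintegrability} holds for any moderate Young function, hence for $\phi^*$). One application of the Markov property then identifies $M_t=\mathbb E_x[f(Z_T)\mid\mathcal F_t]$, so the martingale property is just the tower rule --- no Chapman--Kolmogorov computation and no $L^1+L^\infty$ splitting are needed --- and $\phi$-integrability follows in one line from conditional Jensen, $\mathbb E_x\phi(M_t)\le\mathbb E_x\phi(f(Z_T))<\infty$, the finiteness coming from \eqref{e.exfzt}. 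Your alternative --- verifying $\mathbb E_x[P_{T-t}f(Z_t)\mid\mathcal F_s]=P_{T-s}f(Z_s)$ directly via the semigroup identity, with the decomposition of Proposition~\ref{prop:decomposition}(1) used to justify the Markov and Fubini steps for $f\notin L^1$ --- is a legitimate fix for the obstacle you correctly identified, and your $\phi$-integrability argument also works, though it is slightly roundabout: \eqref{e.Y} gives $\int\phi(P_{T-t}f)\le\int\phi(f)<\infty$ directly, without passing through the Luxemburg-norm contraction \eqref{eq:contraction} and the $\Delta_2$ equivalence. The main thing the paper's route buys, besides brevity, is the fact that $M$ is \emph{closed} by $f(Z_T)$; this is reused in Lemma~\ref{l.nc} to get $M_t\to M_T$ via martingale convergence, so with your version that closedness would still have to be derived separately (it follows from your computation at $t=T$, which your splitting argument does cover, but it is worth stating explicitly). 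One micro-gap: at $t=0$ you call $M_0=P_Tf(x)$ trivially integrable as a constant, but you should note why $P_Tf(x)$ is finite at every $x$ --- this again needs the Young-inequality bound or your $L^1+L^\infty$ splitting, since $f$ itself need not be integrable.
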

 \begin{proof} Let $x \in \mathbb R^d$. From Young's inequality, $f \in L^\phi(\R^d)$ and \eqref{eq:p_tintegrability} with $\phi^*$, it follows that \[\mathbb E_x |f(Z_T)|\le \int \phi(f(y)){\rm d}y + \int \phi^* (p_T(x,y)) {\rm d}y <\infty.\] 
By the Markov property,
\[
\mathbb E_x[ f(Z_T) | \mathcal F_t] = \mathbb P_{T-t}f(Z_t)=M_t,\quad t\in [0,T]. \]
Thus the parabolic martingale is closed by $f(Z_T)$.
Furthermore, by the conditional Jensen inequality and \eqref{e.exfzt}, for $t\in [0,T]$ we get
\begin{align*}
\mathbb E_x\phi(M_t) & =\mathbb E_x\phi(\mathbb E_x[ f(Z_T) | \mathcal F_t]) \le
\mathbb E_x(\mathbb E_x[ \phi(f(Z_T)) | \mathcal F_t]) =\mathbb E_x\phi(f(Z_T))<\infty.
\end{align*}
\end{proof}
Recall that for every (fixed) $t\ge0$, $Z_t=Z_{t-}$ a.s. Here is the limiting behavior of $M$.
 \begin{lem}\label{l.nc}
For every $x\in \Rd$,  $\mathbb P_x$-a.s. we have $M_t\to M_T$ as $t\to T$.
\end{lem}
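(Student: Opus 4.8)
The plan is to identify the almost sure limit of $M_t$ as $t\uparrow T$ by martingale convergence and then to \emph{collapse} the resulting conditional expectation. By Lemma~\ref{lem:mart}, $M$ is a uniformly integrable martingale under $\mathbb P_x$ closed by $M_T=P_0f(Z_T)=f(Z_T)\in L^1(\mathbb P_x)$, that is, $M_t=\mathbb E_x[f(Z_T)\mid\mathcal F_t]$ for $t\in[0,T]$. By Lemma~\ref{l.rPtf}, the map $(t,x)\mapsto P_{T-t}f(x)$ is continuous on $[0,T)\times\Rd$, so, since $Z$ is c\`adl\`ag, the path $t\mapsto M_t=P_{T-t}f(Z_t)$ is right-continuous on $[0,T)$. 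Hence the continuous-parameter martingale convergence theorem (L\'evy's upward theorem) applies and gives, $\mathbb P_x$-a.s. and in $L^1$,
\[
M_t\longrightarrow \mathbb E_x\big[f(Z_T)\mid\mathcal F_{T-}\big]\quad\text{as }t\uparrow T,\qquad \mathcal F_{T-}:=\sigma\big(\textstyle\bigcup_{t<T}\mathcal F_t\big).
\]

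It then remains to show that $M_T=f(Z_T)$ is already $\mathcal F_{T-}$-measurable, which forces $\mathbb E_x[f(Z_T)\mid\mathcal F_{T-}]=f(Z_T)=M_T$ and completes the proof. Here I would use that $Z$ has no fixed time of discontinuity: as recalled just above the lemma, $Z_T=Z_{T-}$ holds $\mathbb P_x$-a.s., and $Z_{T-}=\lim_{t\uparrow T}Z_t$ is $\mathcal F_{T-}$-measurable, being an a.s. limit of the $\mathcal F_t$-measurable random variables $Z_t$, $t<T$. By completeness of the filtration, $Z_T$, and therefore $f(Z_T)$, is $\mathcal F_{T-}$-measurable.

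The hard part, and the reason I avoid a direct pathwise estimate, is exactly this identification. One is tempted to write $M_t-f(Z_T)=\big(P_{T-t}f(Z_t)-P_{T-t}f(Z_{T-})\big)+\big(P_{T-t}f(Z_{T-})-f(Z_{T-})\big)$, using $Z_{T-}=Z_T$, and to let $t\uparrow T$. But the middle term reduces matters to $P_sf\to f$ as $s\downarrow 0$ evaluated at the \emph{random} point $Z_{T-}$, and for a merely measurable $f\in L^\phi(\Rd)$ this convergence holds only in $L^\phi$ or for Lebesgue-almost every argument, not pointwise at a prescribed random location; moreover $P_{T-t}f$ approaches the possibly discontinuous limit $f$, so $u(t,x):=P_{T-t}f(x)$ is not jointly continuous up to $t=T$ and the left limit of the path $t\mapsto M_t$ is not pathwise transparent. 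The martingale route bypasses these analytic difficulties, using only the integrability of $f(Z_T)$ (Lemma~\ref{lem:mart}), the right-continuity of $M$ on $[0,T)$, and the absence of a jump of $Z$ at the deterministic time $T$.
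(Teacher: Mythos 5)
Your proof is correct and follows essentially the same route as the paper's: the paper also invokes the Martingale Convergence Theorem for the closed martingale $M$ and identifies the limit with $M_T=f(Z_T)$ by observing that $f(Z_T)=f(Z_{T-})$ a.s.\ (no fixed discontinuity of $Z$ at $T$), hence is $\mathcal F_{T-}$-measurable. Your additional remarks on right-continuity of $t\mapsto M_t$ and on why a pathwise argument via $P_sf\to f$ would fail are sound elaborations of details the paper leaves implicit.
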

\begin{proof}
This follows from the Martingale Convergence Theorem \cite[Chapter I, Theorem 9]{MR2273672} since $M$ is closed by $f(Z_T)$, which equals $f(Z_{T-})$ a.s., so it is
measurable with respect to   $\mathcal F_{T-}^{Z}$, the $\sigma$-algebra generated by the union of $\mathcal F_t^Z$, $t\in [0,T)$; see Chung \cite[Theorem 10 of Section 1.3]{MR648601}.
\end{proof}
The next result, concerning general symmetric Markov processes, was proved for $\phi(t)=t^p$, $p\in (1,\infty)$, by Kim \cite[Proposition 2.3]{MR3558361}. The proof applies verbatim
to moderate Young functions $\phi$, once we use an extension of  Doob's inequality
given in Corollary \ref{cor:doob-cont}.
\begin{prop}\label{prop:max-Orlicz} Assume that $\phi$ is a $C^1$ moderate Young function with Simonenko indices $1<d_\phi\leq D_\phi<\infty$.  Let $C_\phi = \left(d_\phi/(d_\phi - 1)\right)^{D_\phi}$. Then,
\[\int_{\mathbb R^d}\phi(\sup_{t\geq 0} P_t f(x))\,{\rm d}x \le C_\phi
\int_{\mathbb R^d} \phi(f(x)){\rm d}x < \infty, \quad f \in L^\phi(\Rd).\]
\end{prop}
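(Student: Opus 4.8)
The plan is to follow Kim \cite{MR3558361}, replacing the $L^p$ Doob inequality with its Orlicz version (Corollary~\ref{cor:doob-cont}) and exploiting the symmetry of $Z$ to turn the semigroup maximal function into a martingale maximal function. First I would reduce to $f\ge 0$. Writing $F:=\sup_{t\ge0}P_t|f|$, for each $t$ we have $-F\le P_tf\le F$, so $|\sup_{t\ge0}P_tf|\le F$; since $\phi$ is even and nondecreasing on $[0,\infty)$ this gives $\phi(\sup_{t\ge0}P_tf)\le\phi(\sup_{t\ge0}P_t|f|)$, and as $\int\phi(|f|)=\int\phi(f)$ it suffices to treat $|f|$. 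So assume $f\ge0$, whence $P_tf\ge0$.

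Fix $T>0$ and consider the parabolic martingale $M_s:=P_{T-s}f(Z_s)$, $s\in[0,T]$, which by Lemma~\ref{lem:mart} is a $\phi$-integrable martingale under each $\mathbb P_x$, closed by $M_T=f(Z_T)$. The Orlicz Doob inequality (Corollary~\ref{cor:doob-cont}) gives, for every $x$,
\[
\mathbb E_x\,\phi\Big(\sup_{0\le s\le T}P_{T-s}f(Z_s)\Big)\le C_\phi\,\mathbb E_x\,\phi\big(f(Z_T)\big).
\]
Integrating in $x$ against Lebesgue measure and using \eqref{eq:later} on the right, I obtain
\[
\int_{\Rd}\mathbb E_x\,\phi\Big(\sup_{0\le s\le T}P_{T-s}f(Z_s)\Big){\rm d}x\le C_\phi\int_{\Rd}\phi(f(y)){\rm d}y<\infty.
\]

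Now comes the key step. Because $\nu$ and $Q$ are symmetric, $p_t(x,y)=p_t(y,x)$ and $Z$ is reversible with respect to Lebesgue measure: under $\int_{\Rd}\mathbb P_x(\cdot)\,{\rm d}x$ the path $(Z_s)_{0\le s\le T}$ and its time reversal $(Z_{T-s})_{0\le s\le T}$ have the same law. Applying this to the path functional $\phi\big(\sup_{0\le s\le T}P_{T-s}f(\cdot_s)\big)$ and substituting $r=T-s$ rewrites the left-hand side above as $\int_{\Rd}\mathbb E_x\,\phi\big(\sup_{0\le r\le T}P_rf(Z_r)\big){\rm d}x$. For this reversed functional the martingale property no longer pins the expectation, and I can invoke Jensen's inequality, the elementary bound $\mathbb E_x\sup_r(\cdot)\ge\sup_r\mathbb E_x(\cdot)$, and the monotonicity of $\phi$: for each $x$,
\[
\mathbb E_x\,\phi\Big(\sup_{0\le r\le T}P_rf(Z_r)\Big)\ge \phi\Big(\sup_{0\le r\le T}\mathbb E_x\big[P_rf(Z_r)\big]\Big)=\phi\Big(\sup_{0\le r\le T}P_{2r}f(x)\Big),
\]
where $\mathbb E_x[P_rf(Z_r)]=P_r(P_rf)(x)=P_{2r}f(x)$ by the semigroup property. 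Combining the last three displays yields $\int_{\Rd}\phi\big(\sup_{0\le u\le 2T}P_uf(x)\big){\rm d}x\le C_\phi\int_{\Rd}\phi(f)$, and letting $T\to\infty$ via the Monotone Convergence Theorem gives the claim.

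I expect the main obstacle to be the reversibility step: it must be set up carefully, since it is the only place the symmetry of $Z$ enters and, crucially, it is what breaks the constant-expectation rigidity of the parabolic martingale (without reversal, $\mathbb E_x[P_{T-s}f(Z_s)]=P_Tf(x)$ is independent of $s$ and the Jensen estimate would recover only the single value $P_Tf(x)$ rather than a genuine maximal function). The accompanying technicalities — measurability and finiteness of the suprema, reducing each sup to a countable dense set of times using the joint continuity of $(r,z)\mapsto P_rf(z)$ for $r>0$ afforded by Lemma~\ref{l.rPtf}, and matching the finite-horizon martingale $M$ with the hypotheses of Corollary~\ref{cor:doob-cont} — are routine but should be recorded.
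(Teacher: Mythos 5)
Your proof is correct and coincides with the paper's own argument: the paper establishes this proposition simply by citing Kim's proof of \cite[Proposition 2.3]{MR3558361} --- the parabolic martingale $M_s=P_{T-s}f(Z_s)$, Doob's inequality, and time reversal under the symmetrizing (Lebesgue) measure --- and observing that it applies verbatim once the $L^p$ Doob inequality is replaced by its Orlicz extension, Corollary~\ref{cor:doob-cont}; this is exactly the scheme you have written out in detail. The technicalities you flag (reduction of the suprema to countable time sets, the finite-horizon adaptation of Corollary~\ref{cor:doob-cont}, and reversibility under the $\sigma$-finite measure $\int_{\Rd}\mathbb P_x\,{\rm d}x$) are the same ones implicit in Kim's argument, so there is no gap.
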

We also have a Stein-type inequality for the Luxemburg norm,
\begin{equation}\label{eq:orl}
\|\sup_{t\geq 0} P_tf\|_\phi \le C_\phi\|f\|_\phi.
\end{equation}
Indeed, we can assume $\|f\|_\phi > 0$ and apply Proposition \ref{prop:max-Orlicz} with $\widetilde{f}:=f/\|f\|_\phi$, to obtain
\[\int_{\mathbb R^d}\phi(\sup_{t\geq 0} P_t \widetilde {f}(x))\,{\rm d}x \le C_\phi
\int_{\mathbb R^d} \phi(\widetilde{f}(x)){\rm d}x
=C_\phi
\int_{\mathbb R^d} \phi\left(f(x)/\|f\|_\phi\right){\rm d}x \le C_\phi, \]
by \eqref{eq:orlicz2}. Since $C_\phi \ge 1$, by the convexity of $\phi$ and $\phi(0)=0,$
\[ \int_{\mathbb R^d}\phi\left(\sup_{t\geq 0} P_t \widetilde {f}(x)/C_\phi\right)\,{\rm d}x \le\int_{\mathbb R^d}\phi(\sup_{t\geq 0} P_t \widetilde {f}(x))\,{\rm d}x/C_\phi   \le 1,\]
so \eqref{eq:orl} follows the definition of $\|\cdot\|_\phi$.

 \begin{lem}\label{lem:lem2}
If $\phi$ is a $C^1$ moderate Young function and $f \in L^\phi(\Rd)$, then
\begin{equation}\label{eq:lem2}
\lim_{t\to\infty}\int_{\mathbb R^d} \phi(P_tf(x)) {\rm d}x = 0.
\end{equation}
\end{lem}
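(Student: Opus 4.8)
The plan is to prove the decay \eqref{eq:lem2} first for functions lying in the intersection $L^\phi(\Rd) \cap L^1(\Rd)$, where I can exploit the bound $\|p_t\|_\infty = p_t(0) \to 0$ granted by the Hartman--Wintner condition \eqref{eq:HaWi}, and then to remove the extra $L^1$ assumption by a density/approximation argument using the contractivity \eqref{eq:contraction} and the maximal inequality from Proposition~\ref{prop:max-Orlicz}. For the first step, I would start from the pointwise estimate \eqref{e.exfzt}, which already gives $\int_{\Rd}\phi(P_tf(x))\,{\rm d}x \le \int_{\Rd}\mathbb E_x\phi(f(Z_t))\,{\rm d}x = \int_{\Rd}\phi(f(y))\,{\rm d}y$ via \eqref{e.Y}; but this only gives boundedness, not decay. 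To extract decay I instead apply Jensen's inequality together with the sup-bound on $p_t$: for $f\in L^1\cap L^\phi$ we have $|P_tf(x)| \le (f*p_t)(x)$ estimated crudely, and more usefully $\phi(P_tf(x))$ controlled by $p_t(0)$ times an integral, so that the whole spatial integral tends to $0$ as $t\to\infty$.

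Concretely, for the dense class I would argue as follows. Fix $f$ bounded with compact support (so $f\in L^1\cap L^\infty \subset L^\phi$). Since $|P_tf(x)| \le \|f\|_\infty$ and also $|P_tf(x)| \le \int |f(y)| p_t(x,y)\,{\rm d}y \le p_t(0)\|f\|_1 \to 0$ uniformly in $x$, we get $\|P_tf\|_\infty \to 0$. Using that $\phi(\lambda)/\lambda$ is nondecreasing (as in \eqref{eq:p_tintegrability}) and monotonicity of $\phi$, I can bound
\[
\int_{\Rd}\phi(P_tf(x))\,{\rm d}x \le \frac{\phi(\|P_tf\|_\infty)}{\|P_tf\|_\infty}\int_{\Rd}|P_tf(x)|\,{\rm d}x \le \frac{\phi(\|P_tf\|_\infty)}{\|P_tf\|_\infty}\,\|f\|_1,
\]
where the last inequality uses $\|P_tf\|_1 \le \|f\|_1$ (the operators $P_t$ are $L^1$-contractions by \eqref{e.sk} and Tonelli). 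Because $\|P_tf\|_\infty \to 0$ and $\lim_{\lambda\to 0^+}\phi(\lambda)/\lambda = 0$ by the definition of a Young function, the right-hand side tends to $0$, giving \eqref{eq:lem2} for this dense class.

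For the general $f\in L^\phi(\Rd)$ I would approximate in Orlicz norm by functions $f_n$ in the dense class above (here $\phi\in\Delta_2$ is essential: $\Delta_2$ guarantees that bounded compactly supported functions are dense in $L^\phi$, and that $\|\cdot\|_\phi$-convergence is equivalent to convergence of the modular $\int\phi(\cdot)$). Writing $\int\phi(P_tf) \le \tfrac12\int\phi(2P_t(f-f_n)) + \tfrac12\int\phi(2P_tf_n)$ via convexity, the first term is controlled uniformly in $t$ by $K_\phi\int\phi(P_t(f-f_n)) \le K_\phi\int\phi(f-f_n)$ using $\Delta_2$ and \eqref{e.Y}, which is small for large $n$; the second term tends to $0$ as $t\to\infty$ by the dense-class case applied to $f_n$. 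Taking $t\to\infty$ and then $n\to\infty$ yields the claim. The main obstacle I anticipate is the uniform-in-$t$ control of the error term: I must ensure the $\Delta_2$ estimate lets me trade $\int\phi(2P_t(f-f_n))$ for something depending only on $f-f_n$ and not on $t$, which is exactly where the contraction property \eqref{e.Y} and the doubling constant $K_\phi$ enter, so the argument genuinely uses that $\phi$ is moderate rather than merely a Young function.
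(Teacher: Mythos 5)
Your proof is correct, but it takes a genuinely different route from the paper's. The paper's argument is essentially two lines: by Jensen's inequality and \eqref{e.exfzt}, $\phi(P_tf(x)) \le \mathbb E_x\phi(f(Z_t)) \le p_t(0)\int_{\Rd}\phi(f(y))\,{\rm d}y \to 0$ uniformly in $x$ (the integral is finite because $\phi\in\Delta_2$), and the passage from pointwise to integrated convergence is handled by the Dominated Convergence Theorem, the dominating function being $\phi(\sup_{t\ge 0}P_t|f|)$, integrable by the maximal inequality of Proposition~\ref{prop:max-Orlicz}. You instead establish quantitative decay on the dense class of bounded compactly supported functions, via $\|P_tf\|_\infty \le p_t(0)\|f\|_1 \to 0$, $L^1$-contractivity of $P_t$, and the monotonicity of $\phi(\lambda)/\lambda$, and then transfer it to all of $L^\phi$ by convexity, the modular contraction \eqref{e.Y}, and the doubling constant $K_\phi$; all steps are sound (density of this class in $L^\phi$ indeed requires $\Delta_2$, as you note, and norm convergence implies modular convergence, even without $\Delta_2$, since $\int\phi(g)\le\|g\|_\phi$ when $\|g\|_\phi\le 1$). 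Two remarks on the comparison: despite your opening plan, your executed argument never actually invokes Proposition~\ref{prop:max-Orlicz}, only the contraction \eqref{e.Y}; as a consequence, it uses only $\phi\in\Delta_2$ and not $\phi^*\in\Delta_2$, so it is in this respect more elementary and slightly more general than the paper's proof, which needs the moderate hypothesis through $C_\phi=\left(d_\phi/(d_\phi-1)\right)^{D_\phi}$. The price you pay is the two-step approximation; the paper's proof is shorter in context because the maximal inequality is needed elsewhere anyway (e.g., in the proof of Theorem~\ref{th:Hardy-general}). The only cosmetic point is the ratio $\phi(\|P_tf\|_\infty)/\|P_tf\|_\infty$ when $\|P_tf\|_\infty=0$, which you should flag as the trivial case where the integral vanishes.
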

\begin{proof} By Jensen's inequality and \eqref{e.exfzt}, for $0< t\to \infty$, we get
 \begin{eqnarray*}
 \phi\left(P_tf(x)\right) \le
\mathbb E_x\phi(f(Z_t)) \le p_t(0) \int_{\Rd}\phi(f(y)){\rm d}y  \to 0.
 \end{eqnarray*}
 The result follows from Proposition \ref{prop:max-Orlicz} and the Dominated Convergence Theorem.
 \end{proof}

\subsubsection{The parabolic Hardy--Stein identity} \label{ss.HS}
As before, let $(P_t)_{t\geq 0}$ be the semigroup of the process $Z,$ as given by \eqref{eq:semigroup-def}. Here is the main result of Subsection \ref{s.HS}.
\begin{thm}\label{th:Hardy-general}
Let  $\phi$ be a moderate Young function with absolutely continuous derivative. Under the assumptions of Subsection~\ref{sec:Levy},
for every $f\in L^\phi(\Rd)$ we have
\begin{align}\label{eq:Hardy-general}
\int_{\Rd}\phi(f(x)){\rm d}x = & \frac12\int_0^\infty \int_{\Rd}\phi''(P_tf(y))\left\langle  Q\nabla P_t f(y),\nabla P_t f(y)\right\rangle{\rm d}y {\rm d}t
\nonumber
\\& + \int_0^\infty\int_{\Rd}\int_{\Rd}F_\phi(P_tf(y),P_tf(y+z))\nu({\rm d}z){\rm d}y{\rm d}t.
\end{align}
\end{thm}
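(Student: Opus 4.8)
The plan is to apply the parabolic martingale machinery together with the $\phi$-isometry. The central idea is to fix $T>0$ and $x\in\Rd$, consider the parabolic martingale $M_t=P_{T-t}f(Z_t)$ for $t\in[0,T]$ (which is a $\phi$-integrable martingale by Lemma~\ref{lem:mart}), and apply Corollary~\ref{l.mT} to get $\mathbb E_x\phi(M_T)=\mathbb E_x V^\phi(M)_T$. Since $M_T=f(Z_T)$ and $M_0=P_Tf(x)$, the left-hand side equals $\mathbb E_x\phi(f(Z_T))$. I would then integrate both sides $\int_{\Rd}(\cdots)\,{\rm d}x$; by \eqref{eq:later} the integral of $\mathbb E_x\phi(f(Z_T))$ in $x$ is exactly $\int_{\Rd}\phi(f(y))\,{\rm d}y$, giving the left-hand side of \eqref{eq:Hardy-general} (independent of $T$).

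First I would expand $V^\phi(M)_T$ using the pathwise formula \eqref{eq:itm-smooth} of Theorem~\ref{t.itm-smooth}, applied to the semimartingale $M$. This splits $V^\phi(M)_T$ into a continuous (diffusive) part $\tfrac12\int_0^T\phi''(M_{s-})\,{\rm d}[M]^c_s$ and a pure-jump part $\sum_{0<s\le T,\,\Delta M_s\neq 0}F_\phi(M_{s-},M_s)$, plus $\phi(M_0)$. The key computational step is to identify the quadratic-variation and jump structure of $M_t=P_{T-t}f(Z_t)$ in terms of the decomposition \eqref{eq:decompose}, $Z=D+J$ with $D=\mathcal RW$. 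Writing $u(t,x):=P_{T-t}f(x)$, which is $C^{1,2}$ by Lemma~\ref{l.rPtf}, the continuous martingale part of $M$ comes from $\nabla u(s,Z_{s-})$ applied to the Brownian increments, so ${\rm d}[M]^c_s=\langle Q\nabla u(s,Z_{s-}),\nabla u(s,Z_{s-})\rangle\,{\rm d}s$; meanwhile the jumps satisfy $\Delta M_s=u(s,Z_{s-}+\Delta Z_s)-u(s,Z_{s-})$, so each $F_\phi(M_{s-},M_s)=F_\phi(u(s,Z_{s-}),u(s,Z_{s-}+\Delta Z_s))$.

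Next I would take $\mathbb E_x$ and then $\int_{\Rd}{\rm d}x$. For the continuous part, $\int_{\Rd}\mathbb E_x\big[\tfrac12\int_0^T\phi''(u(s,Z_{s-}))\langle Q\nabla u(s,Z_{s-}),\nabla u\rangle\,{\rm d}s\big]{\rm d}x$ is handled by Tonelli together with the fact that $Z_s$ has density $p_s(x,\cdot)$ and the space-integral against ${\rm d}x$ collapses the heat kernel via \eqref{e.sk}; after substituting $y=Z_{s-}$ and using $u(s,y)=P_{T-s}f(y)$ and the change of variable $t=T-s$, this yields $\tfrac12\int_0^T\int_{\Rd}\phi''(P_tf(y))\langle Q\nabla P_tf(y),\nabla P_tf(y)\rangle\,{\rm d}y\,{\rm d}t$. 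For the jump part, the compensation formula (Lévy system / the compensator of $J$ being ${\rm d}s\,\nu({\rm d}z)$) turns the sum $\sum_s F_\phi(\ldots)$ into $\mathbb E_x\int_0^T\int_{\Rd}F_\phi(u(s,Z_{s-}),u(s,Z_{s-}+z))\,\nu({\rm d}z)\,{\rm d}s$, and the same $\int_{\Rd}{\rm d}x$ / density argument produces $\int_0^T\int_{\Rd}\int_{\Rd}F_\phi(P_tf(y),P_tf(y+z))\,\nu({\rm d}z)\,{\rm d}y\,{\rm d}t$. The term $\phi(M_0)=\phi(P_Tf(x))$ integrates to $\int_{\Rd}\phi(P_Tf(x))\,{\rm d}x$, which tends to $0$ as $T\to\infty$ by Lemma~\ref{lem:lem2}. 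Finally, letting $T\to\infty$ and invoking monotone convergence (each integrand is nonnegative, and $F_\phi\ge 0$, $\phi''\ge0$), the two finite-horizon integrals increase to their infinite-horizon counterparts and the $\phi(P_Tf)$ remainder vanishes, yielding \eqref{eq:Hardy-general}.

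\textbf{The main obstacle} I anticipate is the rigorous identification of the continuous quadratic-variation and the jump/compensator structure of $M=u(\cdot,Z)$ together with the justification of the Tonelli/Fubini interchanges when integrating $\mathbb E_x(\cdots)$ against ${\rm d}x$. Concretely, one must verify that the diffusive part of $M$ really has ${\rm d}[M]^c_s=\langle Q\nabla u,\nabla u\rangle\,{\rm d}s$ (this uses $[D]^c_s=Q\,{\rm d}s$ componentwise and the independence of $W$ and $J$, so that $J$ contributes nothing to $[M]^c$), and that the jump compensator is indeed ${\rm d}s\,\nu({\rm d}z)$; the integrability needed to pass from the pathwise formula to its expectation is supplied by the $\phi$-integrability of $M$ and Corollary~\ref{l.mT}. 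The space-integrability needed for Tonelli—nonnegativity of all integrands—makes the interchange of $\int_{\Rd}{\rm d}x$, $\mathbb E_x$, and the ${\rm d}t$/$\nu({\rm d}z)$ integrals legitimate, so the chief care is bookkeeping of the heat-kernel densities and the substitution $t=T-s$ rather than any deep analytic difficulty.
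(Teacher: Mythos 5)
Your proposal is correct and follows essentially the same route as the paper's proof: the parabolic martingale $M_t=P_{T-t}f(Z_t)$, It\^o's formula with the decomposition $Z=D+J$ to identify $[M]^c_s$ as $\langle Q\nabla P_{T-s}f(Z_{s-}),\nabla P_{T-s}f(Z_{s-})\rangle\,{\rm d}s$, the pathwise formula of Theorem~\ref{t.itm-smooth}, the $\phi$-isometry, the L\'evy system for the jump sum, Tonelli with the kernel collapsing via \eqref{e.sk}, and finally $T\to\infty$ using Lemma~\ref{lem:lem2} and monotone convergence. The only nuance is that the paper invokes Theorem~\ref{prop:integration} directly (direction (1)$\Rightarrow$(2), available since $M$ is a $\phi$-integrable martingale by Lemma~\ref{lem:mart}), whereas your citation of Corollary~\ref{l.mT} presupposes its hypothesis $\mathbb E_x V^\phi(M)_T<\infty$, which you would need to obtain first by exactly that implication.
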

\begin{proof}
Fix $f\in L^\phi(\Rd)$ and  $T\in(0,\infty)$,  then consider the parabolic martingale $M=(M_t)_ {0\leq t\leq T}$, given by \eqref{e.dM}.
 After some preparation,
 we will apply  Theorem \ref{prop:integration}
 and Tonelli's theorem to this martingale.
We first calculate  the $\phi$-variation of $M.$  
Let
 \[G(t,y):=
 P_{T-t}f(y),\quad 0\leq t<T,\ y\in \Rd.\]
Thus,  $M_t=G(t,Z_t)$ when $0\le t<T$.
By the discussion following \eqref{eq:HaWi}, in particular Lemma \ref{l.rPtf}, $G(t,x)$ is $C^{1,2}((0,T) \times \mathbb R^d)$, so 
It\^o's formula for semimartingales \cite[Corollary 14.2.9]{MR3443368} can be applied. It yields, for $t \in [0,T)$,
\begin{align} \label{eq:ito-gen}
M_t-M_0 = &\int_0^t\frac{\partial G}{\partial s}(s, Z_{s-})\,{\rm d}s + \int_{0^+}^t \nabla_x G(s, Z_{s-}){\rm d}Z_s
\nonumber
\\
&+ \frac{1}{2}\int_{0^+}^t \sum_{i,j=1}^d \frac{\partial^2 G}{\partial x_i\partial x_j}(s, Z_{s-})\,{\rm d}[Z_i,Z_j]_s^c
\nonumber
\\
& + \sum_{0 < s\leq t,\, \Delta Z_s\neq 0} \!\!\!\!\!\!
G(s,Z_s)-G(s,Z_{s-})-
\left\langle\nabla_x G(s,Z_{s-}), (Z_s-Z_{s-})\right\rangle.
\end{align}

We now apply the formula \eqref{eq:itm-smooth-2}.
Since $X_{T-}=X_T$, $F_\phi(X_{T-},X_T)=0$.
According to \eqref{eq:decompose} and \eqref{eq:ito-gen}, the only term that can contribute to the continuous part of the quadratic variation of $M$ is the stochastic integral
\begin{align*}I_t& := \int_{0+}^t \nabla_x G(s, Z_{s-}){\rm d}Z_s = \int_{0}^t \nabla_x G(s, Z_{s-}) {\rm d}D_s
+\int_{0+}^t \nabla_x G(s, Z_{s-}) {\rm d}J_s
\\
&=: I_t^{(1)}+I_t^{(2)}, \quad 0 \le t < T.
\end{align*}
Since  $J$ is a pure jump process, $[I^{(2)}]^c=0$ and $[I^{(1)},I^{(2)}]^c=0$,  see \eqref{eq:covar-jump}. It follows that
\[[I]^c_t =\left[ \int_0^{\cdot} \nabla_x G(s, Z_{s-})\,{\rm d}D_s \right]_t = \sum_{i,j=1}^d \int_0^t \frac{\partial^2 G}{\partial x_i \partial x_j} (s, Z_{s-}) {\rm d}[D^i,D^j]_s.
\]
It remains to compute the quadratic covariations $[D^i,D^j]$.  Since $D_t = \mathcal R W_t$, it is elementary to see that
\([D^i,D^j]_s= \left(\sum_{k=1}^d r_{ik} r_{kj} \right) s = q_{ij}s\).
 Thus, for every $0 \le t < T,$
\begin{align*}
[M]_t^c = [I]^c_t &=  \int_0^t \langle
Q\nabla_x P_{T-s}f(Z_{s-}),\nabla_x P_{T-s}f(Z_{s-}) \rangle {\rm d}s.
\end{align*}
By Theorem \ref{t.itm-smooth},
\begin{align}\nonumber
V^\phi(M)_T = \phi(M_0) &+ \frac{1}{2}\int_0^T \phi''(M_{s-})\langle Q\nabla_xP_{T-s}f(Z_{s-}),\nabla_xP_{T-s}f(Z_{s-})\rangle{\rm d}s\\
&+ \sum_{0<s<T,\Delta M_s\neq 0} F_\phi(M_{s-},M_s).
\label{eq:var-M}
\end{align}
We will integrate \eqref{eq:var-M} first with respect to $\mathbb P_x$ and then with respect to ${\rm d}x$, the Lebesgue measure.
For the left-hand side of \eqref{eq:var-M}, from Theorem \ref{prop:integration} and Tonelli's theorem,
\begin{equation}\label{eq:iso-M}
\int_{\mathbb R^d} \mathbb E_xV^\phi(M)_T{\rm d}x = \int_{\mathbb R^d} \mathbb E_x\phi(M_T){\rm d}x =\int_{\mathbb R^d}\phi(f(y)){\rm d}y.
\end{equation}
We then consider the terms on the right-hand side of \eqref{eq:var-M}.
Since $p_s(x,y)$ is the  density of both $Z_s$ and $Z_{s-}$ under $\mathbb P_x$, by \eqref{e.sk}, 
we get 
\begin{align}
 \label{eq:n11} \nonumber
 & \int_{\Rd} \mathbb E_x\int_0^T  \phi''(M_{s-}){\rm d}[M]_s^c{\rm d}x  \\ \nonumber
&= \int_0^T\int_{\mathbb R^d}\int_{\mathbb R^d} \phi''(P_{T-s}f(y))\left\langle Q\nabla_x P_{T-s}f(y),\nabla_x P_{T-s}f(y)\right\rangle  p_s(x,y){\rm d}y{\rm d}x{\rm d}s
\\
 &=  \int_0^T \int_{\mathbb R^d} \phi''(P_{t}f(y))\left\langle Q\nabla_x P_{t}f(y),\nabla_x P_{t}f(y)\right\rangle {\rm d}y{\rm d}t.
\end{align}
We then consider $\sum_{0<s< T,\,\Delta M_s\neq 0} F_\phi(M_{s-},M_s)$. Since the jumps of $M$ can only occur at the jump times of $Z$,
it follows that
\begin{align*}
\sum_{0<s< T,\,\Delta M_s\neq 0} F_\phi(M_{s-},M_s) = \sum_{0<s< T,\,\Delta Z_s\neq 0} F_\phi(M_{s-},M_s)=\sum_{0<s< \infty} H_T^\phi(s,Z_{s-},Z_s),
\end{align*}
where
\[H_T^\phi(s,x,y) :=\mathbf 1_{[0,T)}(s) F_\phi(P_{T-s}f(x),P_{T-s}f(y)).\]
This is a nonnegative (Borel) function so by the L\'{e}vy system of $Z$, see, e.g., Bogdan, Rosiński, Serafin and Wojciechowski \cite[Lemma 4.1]{MR3737628},
\begin{align}\nonumber
\mathbb E_x \!\!\! \sum_{0<s< T,\,\Delta M_s\neq 0} \!\!\! F_\phi(M_{s-},M_s)& = \mathbb E_x \int_0^\infty \int_{\mathbb R^d} H_T^\phi(s, Z_s,Z_s+z)\nu({\rm d}z){\rm d}s\\ \nonumber
&=  \mathbb E_x \int_0^T\int_{\mathbb R^d} F_\phi(P_{T-s}f(Z_s), P_{T-s}f(Z_s+z))\nu({\rm d}z) {\rm d}s\\
&=  \int_0^T \int_{\mathbb R^d} \int_{\mathbb R^d} F_\phi(P_{T-s}f(y),P_{T-s}f(y+z))\nu({\rm d}z) p_{s}(x,y){\rm d}y{\rm d}s.
\label{eq:nlk}
\end{align}
Here $x \in \Rd$ is arbitrary. Integrating with respect to ${\rm d}x$ and using Tonelli's theorem and \eqref{e.sk}, we get
\begin{align}\label{eq:n12}\nonumber
\qquad \qquad & \int_{\mathbb R^d}\mathbb E_x\!\!\!\sum_{0<s< T,\,\Delta M_s\neq 0} \!\!\! F_\phi(M_{s-},M_s){\rm d} x\\
\nonumber
& =
 \int_{\mathbb R^d}\int_0^T \int_{\mathbb R^d} \int_{\mathbb R^d} F_\phi(P_{T-s}f(y),P_{T-s}f(y+z))\nu({\rm d}z) p_{s}(x,y){\rm d}y{\rm d}s{\rm d}x\\ \nonumber
 &= \int_0^T \int_{\mathbb R^d} \int_{\mathbb R^d} F_\phi(P_{T-s}f(y),P_{T-s}f(y+z))\nu({\rm d}z){\rm d}y{\rm d}s\\
 &= \int_0^T \int_{\mathbb R^d} \int_{\mathbb R^d} F_\phi(P_{t}f(y),P_{t}f(y+z))\nu({\rm d}z){\rm d}y{\rm d}t.
  \end{align}

Considering \eqref{eq:var-M}, \eqref{eq:iso-M},  \eqref{eq:n11} and \eqref{eq:nlk}, 
we get
\begin{align*}
\int_{\mathbb R^d}\phi(f(x)){\rm d}x  &=
  \frac12\int_0^T \int_{\mathbb R^d} \phi''(P_{t}f(y))\left\langle Q\nabla_x P_{t}f(y),\nabla_x P_{t}f(y)\right\rangle {\rm d}y{\rm d}t\\
&\phantom{ppp}+
 \int_0^T \int_{\mathbb R^d} \int_{\mathbb R^d} F_\phi(P_{t}f(y),P_{t}f(y+z))\nu({\rm d}z){\rm d}y{\rm d}t+\int_{\mathbb R^d} \phi(P_Tf(x)){\rm d}x. \end{align*}

We let $T\to\infty$.
By
Lemma \ref{lem:lem2},
$\int_{\mathbb R^d} \phi(P_Tf(x)){\rm d}x\to 0$.  All the integrands are nonnegative, so the Monotone Convergence Theorem yields \eqref{eq:Hardy-general}.
\end{proof}

\subsection{Elliptic Hardy--Stein identity}

In this subsection, we prove Hardy--Stein identities for harmonic functions of L\'evy processes. The results generalize \cite[Section 3]{MR4589708} by allowing rather general convex functions $\phi$ and more general processes. 
Let $Z$ be the  L\'{e}vy process from Subsection \ref{sec:Levy}. 
For technical reasons that will become apparent later on, we \textit{additionally} assume that 
the L\'evy measure $\nu$ is \textit{unimodal} \cite{MR3729529}. More precisely, we require $\nu({\rm d}x)=\nu(|x|){\rm d}x$, with nonincreasing \textit{profile} $\nu:(0,\infty)\mapsto (0,\infty)$ and $Q=\sigma^2 I$ with $\sigma \ge 0$. Then the  L\'evy-Khinchine exponent of $Z$ is 
	\begin{equation*}
		\psi(\xi) = \frac{\sigma^2}{2} |\xi|^2 + \int_{\Rd} \left(1-\cos\langle\xi, x\rangle\right)\nu(|x|){\rm d}x,\quad \xi\in \Rd.
	\end{equation*}
We keep the Hartman-Wintner condition \eqref{eq:HaWi}; it suffices, e.g., if $\sigma>0$.

\subsubsection{Harmonic functions} \label{ss:Harmonic}
For an open set $U\subset \Rd$, we define 
\begin{equation}\label{e.dtU}
\tau_U:=\inf\{t\geq 0: Z_t\notin U\},
\end{equation}
the time of the first exit of $Z$ from $U$. Let  $u:\Rd\to\R$ and assume that
\begin{equation}\label{eq:reg-harmonic}
u(x)=\mathbb E_x [u(Z_{\tau_U}); \tau_U<\infty],\quad x\in U,
\end{equation}
and that the expectations are absolutely convergent:
we say that such $u$ is \textit{regular harmonic}
in $U$ with respect to $Z$. Of course,  the equality in \eqref{eq:reg-harmonic} always holds for $x\in U^c$. If $U$ is bounded, then $\tau_U<\infty$ $a.s.$ with respect to every $\mathbb P_x$, $x\in \Rd$.
Guided by \cite[Remark~4.4]{MR4088505}, we see that
\begin{equation}\label{eq.dme}
X_t:= u(Z_{\tau_U\wedge t}),\quad t\ge 0,
\end{equation}
is a martingale closed by $u(Z_{\tau_U}){\bf 1}_{\tau_U<\infty}$ with respect to each $\mathbb P_x$. Let us briefly explain that it is a c\`adl\`ag martingale.
Due to $u(x)=\mathbb E_x u_+(Z_{\tau_U}) -\mathbb E_x u_-(Z_{\tau_U})$, $x\in \Rd$, it suffices to consider $u \ge 0$. By inspection, the \textit{stopped process} $t\mapsto Z_{\tau_U \wedge t}$ is a Hunt process. By the above martingale property, 
$u$ is \textit{excessive} for the stopped process; see \cite[II.3]{MR850715} or \cite[Chapter 2]{MR648601} for the definition.
By the composition theorem \cite[IV, Corollary 4.13]{MR850715}, $u(Z_{\tau_U\wedge t})$ is right continuous ($a.s.$). On the other hand, by Doob's regularization theorem  for martingales \cite[Chapter II, Theorem 2.9]{MR1725357}, $X_t$ has a c\`adl\`ag \textit{modification}. 
Since $X_t$ is already right continuous, it is actually c\`adl\`ag.

We fix an arbitrary (nonempty) bounded open set $D\subset \Rd$. If \eqref{eq:reg-harmonic} holds for all open bounded (relatively compact) sets $U\subset \overline{U}\subset D$, then we say that $u$ is {\em harmonic} in $D.$

Here is a Weyl-type lemma for unimodal L\'evy operators, an extension of \cite[Theorem~1.7]{MR3729529} and  \cite[Theorem~4.9 and 5.7]{MR4088505}. Note that \cite[Theorem~1.7]{MR3729529} only dealt with bounded harmonic functions and  \cite[Theorem~4.9 and 5.7]{MR4088505} only dealt with purely nonlocal operators.
Our variant makes no additional growth or integrability assumptions on harmonic functions $u$
 and allows for $\sigma>0$. We thank Mateusz Kwaśnicki and Artur Rutkowski for discussions on this extension.
\begin{lem} \label{lem:aux1}
	Assume that for every $r_0> 0$, there is a constant $C(r_0)$ such that $|\nu(r)| \lesssim C(r_0)\nu(r+1)$ for $r>r_0$ and $\nu^{(n)}$ is continuous on $(0,\infty)$, with $|\nu^{(n)}(r)| \lesssim C(r_0)\nu(r)$, for $r>r_0$ and $n=1,\ldots, N$. If $u$ is harmonic in $D$,
then $u\in C^N(D)$.
\end{lem}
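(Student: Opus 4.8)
The plan is to turn the probabilistic harmonicity of $u$ into a weak equation for the (constant–coefficient) Lévy generator $L$ and then bootstrap interior regularity, the decisive new point being that the long–range part of $L$ inherits exactly $N$ derivatives from the profile $\nu$. First I would reduce to $u\ge 0$: as recorded before the statement, $u$ is the difference of the two nonnegative regular harmonic functions $x\mapsto\mathbb E_x u_\pm(Z_{\tau_U})$, and $C^N$–regularity is additive, so I may assume $u\ge0$ with $u(x)=\mathbb E_x[u(Z_{\tau_U})]$ absolutely convergent for all $U\subset\overline U\subset D$. Using this together with Dynkin's formula and the symmetry of $\nu$ and $Q=\sigma^2 I$, I would establish that $u$ is \emph{weakly harmonic}: $\int u\,L\varphi=0$ for all $\varphi\in C_c^\infty(D)$, where, since $\nu$ is symmetric, $Lf=\tfrac{\sigma^2}{2}\Delta f+\tfrac12\int_{\Rd}\big(f(\cdot+z)+f(\cdot-z)-2f\big)\nu(|z|)\,dz$. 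The pairing with the (non–compactly supported) tail of $L\varphi$ converges without any growth hypothesis on $u$, because the Ikeda--Watanabe formula applied to $\mathbb E_x|u(Z_{\tau_U})|$ forces $\int_{|y-x|\ge1}|u(y)|\,\nu(|y-x|)\,dy<\infty$ locally in $x$; this is exactly what lets me drop the boundedness/growth assumptions of \cite{MR3729529,MR4088505}.

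Next I would split $L=A+K$ through a smooth radial cutoff $\chi$ with $\chi\equiv0$ near $0$ and $\chi\equiv1$ on $\{|z|\ge1\}$, collecting the diffusion and the short–range jumps into $A$ and the long–range part into $Kf(x)=\int_{\Rd}\big(f(x+z)-f(x)\big)\chi(|z|)\nu(|z|)\,dz=w_f(x)-\lambda f(x)$, where $\lambda=\int_{\Rd}\chi(|z|)\nu(|z|)\,dz<\infty$ and
\[
w_f(x)=\int_{\Rd}f(y)\,\chi(|y-x|)\,\nu(|y-x|)\,dy .
\]
Weak harmonicity then reads $Au=\lambda u-w_u$ in $D$. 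The role of the smooth cutoff is that $x\mapsto w_u(x)$ has no moving boundary, so I may differentiate under the integral up to order $N$: the $x$–derivatives fall on $\chi(|y-x|)\nu(|y-x|)$, are continuous and bounded by $\lesssim\nu(|y-x|)$ by the hypotheses $|\nu^{(n)}|\lesssim\nu$ and continuity of $\nu^{(n)}$, and the comparability $\nu(r)\lesssim\nu(r+1)$ dominates them, uniformly for $x$ near any $x_0$, by $C\,\nu(|y-x_0|)$, which is $u$–integrable by the previous paragraph. Hence $w_u\in C^N(D)$.

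It remains to bootstrap from $Au=\lambda u-w_u$. Here I would invoke an interior smoothing estimate for the short–range principal operator $A$: if $Au=g$ with $g\in C^k_{\mathrm{loc}}(D)$, then $u\in C^{k+\gamma}_{\mathrm{loc}}(D)$ for a fixed regularization order $\gamma>0$. Starting from the local boundedness of $u$ and using $w_u\in C^N$, the source $\lambda u-w_u$ is never less regular than the current iterate (and never worse than $C^N$), so iterating $u\in C^0\Rightarrow C^{\gamma}\Rightarrow C^{2\gamma}\Rightarrow\cdots$ reaches $C^N$ in finitely many steps. When $\sigma>0$ the gain $\gamma=2$ is classical interior elliptic regularity for $\tfrac{\sigma^2}{2}\Delta$, treating the bounded short–range jump part as a perturbation; this is the ingredient that lets us allow $\sigma>0$.

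The main obstacle is the interior smoothing estimate for $A$ in the purely nonlocal regime $\sigma=0$, where one must use the unimodality of $\nu$ and the Hartman--Wintner condition \eqref{eq:HaWi} (ensuring the smoothness and good scaling of the transition densities) to obtain a definite regularization order, and then verify that iterating this order indeed matches the $C^N$ ceiling furnished by $w_u$. For this step I would adapt the Schauder–type interior estimates for unimodal Lévy operators from \cite{MR3729529,MR4088505}; the reduction to it — the splitting $L=A+K$, the equation $Au=\lambda u-w_u$, and the regularity $w_u\in C^N$ — is the routine part, and the stated mild conditions on $\nu$ are precisely what make the long–range term $w_u$ exactly $C^N$.
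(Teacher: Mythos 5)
There is a genuine gap, and it sits exactly where you flag ``the main obstacle'': the interior smoothing estimate for the short-range operator $A$ when $\sigma=0$. Your bootstrap requires a \emph{fixed} regularization order $\gamma>0$ (if $Au=g$ with $g\in C^k_{\mathrm{loc}}$ then $u\in C^{k+\gamma}_{\mathrm{loc}}$), but no such estimate holds under the paper's hypotheses. The standing assumptions on the small-jump part of $\nu$ are only unimodality, $\int(|z|^2\wedge 1)\nu({\rm d}z)<\infty$, and Hartman--Wintner \eqref{eq:HaWi}; the derivative bounds $|\nu^{(n)}(r)|\lesssim\nu(r)$ are imposed only away from the origin and say nothing about the order of the operator. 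Hartman--Wintner merely forces $\psi(\xi)/\log|\xi|\to\infty$ and is compatible with symbols like $\psi(\xi)\asymp\log^{2}(1+|\xi|)$ (e.g.\ suitable subordinate Brownian motions), for which the semigroup gains only logarithmic-type smoothness per unit time: there is no $\gamma>0$ in the H\"older scale, so the chain $C^0\Rightarrow C^\gamma\Rightarrow C^{2\gamma}\Rightarrow\cdots$ never starts moving. The Schauder-type results you propose to ``adapt'' from \cite{MR3729529,MR4088505} do not exist there in that form; indeed \cite[Theorem~1.7]{MR3729529} is \emph{not} proved by PDE bootstrap at all. A secondary unproven point: you start the iteration ``from the local boundedness of $u$'', but a priori $u$ is only Borel with absolutely convergent exit expectations; even local boundedness has to be extracted from the representation, not assumed. (Your first two steps --- reduction to $u\ge0$ and the local $\nu$-weighted integrability $\int u(z)\,\nu(1\wedge|z|)\,{\rm d}z<\infty$ via the Ikeda--Watanabe lower bound --- do match the paper.)

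The paper's actual mechanism avoids any ellipticity or order considerations: after establishing the integrability above through the lower bound $P_r(z)\ge\nu(|z|+r)\,\mathbb E_0\tau_{B_r}$ on the harmonic-measure density of a ball, it follows the Grzywny--Kwa\'snicki argument, which differentiates the Poisson-type representation $u(x)=\int u(y)\,P_{B_r}(x,{\rm d}y)$ directly in $x$: the derivatives fall on the kernel $\int_{B_r}G_r(\cdot,v)\,\nu(y-v)\,{\rm d}v$, whose $C^N$ smoothness and domination are supplied exactly by the hypotheses $|\nu^{(n)}|\lesssim\nu$ and $\nu(r)\lesssim\nu(r+1)$, and the non-smooth near-diagonal part of the harmonic measure is handled by iterating the mean-value property over $k=k(N,d)$ concentric balls $B_r\subset B_{3r}\subset\cdots\subset B_{(2k+1)r}\subset D$. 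In other words, the regularity is harvested from the tail of $\nu$ through the representation formula, with harmonicity itself replacing any smoothing of the principal part; that is why the result can accommodate both $\sigma>0$ and arbitrarily weak small-jump activity, precisely the regime where your bootstrap breaks down. To repair your argument you would have to replace the fixed-$\gamma$ smoothing claim by such an iterated-representation step, at which point you would essentially be reproducing the paper's proof.
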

\begin{proof}
It suffices to prove that $u\in C^N(B_r)$ if $0\in D$ and $r>0$ is small enough. Here, $B_r:=\{y\in \Rd:\; |y|<r\}$. To this end, recall that $\nu>0$. For $r>0$ we consider  the harmonic measure at the center of the ball $B_r$,
\[P_r(A):=\mathbb P_0(X_{\tau_{B_r}}\in A), \quad A\subset \Rd.
\]
On $(\bar{B_r})^c$, $P_r(\cdot)$ has the density function 
\[P_r(z):=\int_{B_r} G_r(0,y)\nu(z-y)dy\ge \nu(|z|+r) \int_{B_r} G_r(0,y)dy,\quad |z|>r.
\]
Here $G_r$ is the Green function of $B_r$ and $\int_{B_r} G_r(0,y)dy=\mathbb E_0 \tau_{B_r}\in (0,\infty)$, see \cite[Section 2.1]{MR3729529}.
As before, we may assume that $u\ge 0$.
If $x\in D$ and $0<r<{\rm dist}(x,D^c)$, then 
\begin{align*}
\infty>u(x)&=\int_{\Rd}u(x+y) P_r(dy)\ge \int_{|z|>r}u(x+z) P_r(z)dz\\
&\ge\mathbb E_0 \tau_{B_r} \int_{|z|>r} u(x+z)\nu(|z|+r)dz.
\end{align*}
Considering two such points $x$ and small enough $r>0$, we see that $u$ is locally integrable on $\Rd$. In fact, $\int_{\Rd} u(z)\;  \nu(1\wedge|z|) dz<\infty$. At this moment, we may just follow the proof of \cite[Theorem~1.7]{MR3729529}. In effect, if $u$ is regular harmonic in $B_{(2k+1)r}\subset D$ (the assumption was glossed over in \cite{MR3729529} and  \cite{MR4088505}), then $u\in C^N(B_r)$, where $k$ is a suitable integer depending on $N$ and $d$, as specified in the cited proof. 
\end{proof}

\subsubsection{Local It\^o formula}\label{sec:itolocal}
For the convenience of the reader, we state and verify the following result; see also \cite[p. 334]{MR745449}.
\begin{lem}\label{eq:itolocal}
If 
$(Z_t, t\ge 0)$ is a semimartingale with values in $\Rd$, $u:\Rd\to \R$,  $u\in C^2(D)$, 
$U$ is an open, relatively compact subset of $D$, and $\tau_U$ is given by \eqref{e.dtU},
then 
$(u(Z_t^{\tau_U}), t\ge 0)$ is a semimartingale  with the representation
\begin{align}\label{eq:ito-local}
u(Z_t^{\tau_U}) = & u(Z_0)+\int_{0+}^{t\wedge \tau_U} \nabla u(Z_{s-}){\rm d}Z_s +\frac{1}{2}\sum_{i,j=1}^d\int_{0+}^{t\wedge \tau_U}
\frac{\partial^2 u}{\partial x_i\partial x_j}(Z_{s-}){\rm d}[Z^i,Z^j]^c_s \nonumber\\
&+ \sum_{0 < s\leq t\wedge \tau_U, \Delta Z_s\neq 0} \left(
u(Z_s)-u(Z_{s-}) -\nabla u(Z_{s-})\cdot(Z_s-Z_{s-})\right).
\end{align}
In particular,
\begin{equation}\label{eq:quad-var-cont}
[u(Z^{\tau_U})]^c_t = \int_0^{t\wedge\tau_U} \sum_{i,j=1}^d \frac{\partial u}{\partial x_i} \frac{\partial u}{\partial x_j} (Z_{s-}) {\rm d}[Z_i,Z_j]^c_s. 
\end{equation}
\end{lem}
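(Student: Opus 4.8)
The plan is to reduce the statement to the classical It\^o formula for a globally $C^2$ function by modifying $u$ outside $\overline U$, and then to stop at $\tau_U$. First I would choose a cutoff $\chi\in C_c^\infty(D)$ with $\chi\equiv 1$ on an open neighborhood $V$ of $\overline U$, and set $\tilde u:=\chi u$, extended by zero off the (compact) support of $\chi$. Since $u\in C^2(D)$ and $\chi\in C_c^\infty(D)$, the product is $C^2$ on $D$ and vanishes near $\partial D$, so $\tilde u\in C^2(\Rd)$ and $\tilde u$ agrees with $u$, together with its first and second derivatives, on $V\supset\overline U$. To $\tilde u(Z_t)$ I would then apply the standard It\^o formula for semimartingales \cite[Corollary 14.2.9]{MR3443368}, which is legitimate because $\tilde u\in C^2(\Rd)$; this gives the representation \eqref{eq:ito-local} with $u$ replaced by $\tilde u$ and the upper limit $t$ in place of $t\wedge\tau_U$.

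Next I would stop the resulting identity at $\tau_U$. By \eqref{e.si} the stochastic integral stopped at $\tau_U$ becomes $\int_{0+}^{t\wedge\tau_U}\nabla\tilde u(Z_{s-})\,{\rm d}Z_s$, while the continuous Stieltjes integral and the jump sum stopped at $\tau_U$ simply acquire the upper limit $t\wedge\tau_U$. The key observation is that every integrand is evaluated at a left limit $Z_{s-}$ with $s\le\tau_U$, and such left limits lie in $\overline U\subset V$: indeed, for $r<\tau_U$ one has $Z_r\in U$, so $Z_{s-}=\lim_{r\uparrow s}Z_r\in\overline U$. Hence $\tilde u$, $\nabla\tilde u$ and the $\partial^2\tilde u/\partial x_i\partial x_j$ may be replaced by $u$, $\nabla u$ and $\partial^2 u/\partial x_i\partial x_j$ throughout both integral terms and in the value $\tilde u(Z_{s-})$ appearing inside the jump sum, since there $\tilde u=u$.

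The one delicate point, which I expect to be the main obstacle, is the exit position $Z_{\tau_U}$, which need not lie in $\overline U$ when $Z$ leaves $U$ by a jump, so that $\tilde u(Z_{\tau_U})\neq u(Z_{\tau_U})$ in general. I would resolve it by noting that, once $t\ge\tau_U$, the value $\tilde u(Z_{\tau_U})$ enters the identity in exactly two places: additively on the left-hand side, through $\tilde u(Z_{t\wedge\tau_U})=\tilde u(Z_{\tau_U})$, and additively in the jump term at $s=\tau_U$, through $\tilde u(Z_s)-\tilde u(Z_{s-})-\nabla\tilde u(Z_{s-})\cdot\Delta Z_s$, present precisely when $\Delta Z_{\tau_U}\neq 0$. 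If the exit is continuous, then $Z_{\tau_U}=Z_{\tau_U-}\in\partial U\subset\overline U$ and no change is needed; if it occurs by a jump, replacing $\tilde u(Z_{\tau_U})$ by $u(Z_{\tau_U})$ alters both sides by the same increment $u(Z_{\tau_U})-\tilde u(Z_{\tau_U})$, so the identity is preserved. This delivers \eqref{eq:ito-local} for $u$; all terms are well defined because $Z_{\tau_U-}\in\overline U\subset D$, where $u$ and $\nabla u$ are regular, and $u$ itself is defined on all of $\Rd$.

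Finally, for \eqref{eq:quad-var-cont} I would read off the continuous part of the quadratic variation directly from \eqref{eq:ito-local}. The continuous finite-variation term contributes nothing to the quadratic variation, and the jump sum is a pure-jump process, so by \eqref{eq:covar-jump} both its continuous self-variation and its continuous covariation with any semimartingale vanish; hence $[u(Z^{\tau_U})]^c$ coincides with the continuous part of the quadratic variation of the stochastic integral $\int_{0+}^{\cdot\wedge\tau_U}\nabla u(Z_{s-})\,{\rm d}Z_s$. Applying the identity $[H\cdot Z]^c=\sum_{i,j}\int H^i H^j\,{\rm d}[Z^i,Z^j]^c$ with $H^i=\partial u/\partial x_i(Z_{-})$ then yields \eqref{eq:quad-var-cont}.
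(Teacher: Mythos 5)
Your proof is correct, and it reaches \eqref{eq:ito-local} by a genuinely different mechanism than the paper. The paper modifies the \emph{process}: it fixes $z_0\in U$, redirects the exit by setting $\widetilde Z_t:=Z_t\mathbf 1_{\{t<\tau_U\}}+z_0\mathbf 1_{\{t\ge\tau_U\}}$, writes $Z^{\tau_U}=\widetilde Z+R$ with the single-jump finite-variation correction $R_t=(Z_{\tau_U}-z_0)\mathbf 1_{\{t\ge\tau_U\}}$, extends $u$ from $\overline U$ to a function $\widetilde u\in C^2(\Rd)$, applies the global It\^o formula, and cancels the extra terms at $\tau_U$ by inspection. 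You instead modify the \emph{function} only: the cutoff $\tilde u=\chi u$ is in fact a tidier construction than the paper's, since it needs no Whitney-type $C^2$ extension from the closed set $\overline U$ but only the hypothesis $u\in C^2(D)$ together with compactness of $\overline U$ in $D$; after stopping via \eqref{e.si} you use the same two facts that drive the paper's inspection step, namely $Z_{s-}\in\overline U$ for $0<s\le\tau_U$ and $Z_s\in U$ for $s<\tau_U$, so that the only discrepancy, $\tilde u(Z_{\tau_U})$ after a jump exit, appears additively on both sides of the identity and cancels. That cancellation plays exactly the role of the paper's bookkeeping with the correction process $R$; both arguments hinge on the same single delicate point, the exit position. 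One small caveat: your ``two places'' accounting tacitly assumes $\tau_U>0$; if $Z_0\notin U$ then $\tau_U=0$, there is no jump term at $s=0$ (the value $\tilde u(Z_0)$ is matched by the initial term rather than a jump term), and \eqref{eq:ito-local} reduces to the trivial identity $u(Z_0)=u(Z_0)$ --- the paper dispatches this case explicitly at the start of its proof, and you should too. For \eqref{eq:quad-var-cont} the two arguments are essentially the same --- continuous finite-variation and pure-jump terms contribute nothing to the continuous part of the quadratic variation, by \eqref{eq:covar-jump}, leaving only the stochastic integral, whose continuous bracket is computed from the formula $[H\cdot X,K\cdot Y]=\int H K\,{\rm d}[X,Y]$ of \cite[Chapter II]{MR2273672} --- except that the paper identifies the bracket of the unstopped representation and then stops using \eqref{e.gwiazdka}, while you work directly with the stopped integral; both routes are valid.
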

\begin{proof} The equality is trivial if $Z_0\in U^c$. Furthermore, if $u\in C^2(\Rd)$ then \eqref{eq:ito-local} follows from the It\^o formula \cite[Theorem II.32]{MR2273672} by considering \textit{time} $t\wedge \tau_U$. 
Else, we fix $z_0\in U$ and define
\[\widetilde{Z}_t:= Z_t\mathbf 1_{\{t<\tau_U\}} + z_0 \mathbf 1_{\{t\geq\tau_U\}}.\] 
Of course, the process $Z$ \textit{stopped} at $\tau_U$ has a similar decomposition:
$$Z_t^{\tau_U}=Z_t\mathbf 1_{\{t<\tau_U\}} + Z_{\tau_U} \mathbf 1_{\{t\geq\tau_U\}}.$$
Both processes are semimartingales and so is their difference,
$$R_t:=Z_t^{\tau_U}-\widetilde{Z}_t=(Z_{\tau_U}-z_0) \mathbf 1_{\{t\geq\tau_U\}}.$$
We extend $u$ from $\overline U$ to a function $\widetilde u\in C^2(\Rd)$. 
Thus, $\widetilde u(z)=u(z)$ for $z\in \overline  U$, $\widetilde Z_{s}=Z_{s}$ for $s<\tau_U$ and $\widetilde Z_{s-}=Z_{s-} \in \overline U$ for $s\le\tau_U$.
By the above discussion and \eqref{e.si},
\begin{align*}
\widetilde u(\widetilde Z_{t\wedge \tau_U}) = & u(Z_0)+\int_{0+}^{t\wedge \tau_U} \nabla u(Z_{s-}){\rm d}Z_s+
\int_{0+}^{t\wedge \tau_U} \nabla u(Z_{s-}){\rm d}R_s\\
&+\frac{1}{2}\sum_{i,j=1}^d\int_{0+}^{t\wedge \tau_U}
\frac{\partial^2 u}{\partial x_i\partial x_j}(Z_{s-}){\rm d}[Z^i,Z^j]^c_s 
\nonumber\\
&+ \sum_{0 < s\leq t\wedge \tau_U, \Delta Z_s\neq 0} \left(
u(Z_s)-u(Z_{s-}) -\nabla u(Z_{s-})\cdot(Z_s-Z_{s-})\right)\\
&+{\bf 1}_{t\ge \tau_U}\left(\widetilde u(\widetilde Z_{\tau_U})-u(Z_{\tau_U})-\nabla u(Z_{\tau_U-})\cdot(\widetilde Z_{\tau_U}-Z_{\tau_U})\right).
\end{align*}
Clearly, $\widetilde Z_{\tau_U}=z_0$, $\widetilde u(z_0)=u(z_0)$, $\widetilde u(\widetilde Z_{t\wedge \tau_U})=
u(Z_t^{\tau_U})+{\bf 1}_{t\ge \tau_U} \left(u(z_0)-u(Z_{\tau_U})\right)$,
and $\int_0^{t\wedge \tau_U} \nabla u(Z_{s-}){\rm d}R_s={\bf 1}_{t\ge \tau_U}\nabla u(Z_{\tau_U-})(Z_{\tau_U}-z_0)$, so
\eqref{eq:ito-local} follows by inspection. To obtain \eqref{eq:quad-var-cont}, we note that with $t \wedge \tau_U$ replaced by $t$, the continuous part of the quadratic variation of the right-hand side of \eqref{eq:ito-local} is precisely the right-hand side of \eqref{eq:quad-var-cont} with $t \wedge \tau_U$ replaced by $t$. Hence, the result follows by \eqref{e.gwiazdka}.
\end{proof}

We now return to the more restrictive setting introduced \textit{before} Section \ref{sec:itolocal}.
The following proposition extends part of the results of \cite{MR3737628}. 
Recall that the Green operator of  $D$  for $Z$ is defined by
$$
\int_D G_D(x,y) f(y){\rm d}y:=\mathbb E_x \int_0^{\tau_D}f(Z_s){\rm d}s,\quad x\in \Rd,\quad f\geq 0.
$$

\begin{prop}\label{p.piD} Let 
$D\subset \Rd$ be open and   let
$u:\Rd\to\R$ be regular harmonic in $D$ with respect to $Z$.
If $x\in D$ and $\mathbb E_x\phi(u(Z_{\tau_D}))<\infty$, then 
   \begin{align} \label{eq:sss} \nonumber
\mathbb E_x\phi(u(Z_{\tau_D})) = & \phi(u(x)) \nonumber \\
&+ \int_D G_D(x,y)\left[ \frac{1}{2} \phi''(u(y))|\nabla u(y)|^2 
+ \int_{\Rd} F_\phi(u(y), u(y+z))\nu(z){\rm d}z\right]{\rm d}y.
  \end{align}  
\end{prop}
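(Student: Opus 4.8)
The plan is to derive \eqref{eq:sss} from the $\phi$-isometry applied to the stopped martingale
\[ X_t:=u(Z_{\tau_D\wedge t}),\qquad t\ge0,\]
introduced in \eqref{eq.dme}, which under $\mathbb P_x$ is a c\`adl\`ag martingale closed by $\xi:=u(Z_{\tau_D})\mathbf 1_{\tau_D<\infty}$. First I would check that $X$ is $\phi$-integrable: by the conditional Jensen inequality,
\[ \mathbb E_x\phi(X_t)=\mathbb E_x\phi\bigl(\mathbb E_x[\xi\mid\mathcal F_t]\bigr)\le\mathbb E_x\phi(\xi)=\mathbb E_x\phi(u(Z_{\tau_D}))<\infty,\]
using the hypothesis, so Theorem~\ref{prop:integration} is available for $X$.

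Second, I would promote the $\phi$-isometry from deterministic times to the exit time $\tau_D$. Since $X=X^{\tau_D}$, equation \eqref{e.gwiazdka} gives $V^\phi(X)_{\tau_D\wedge t}=V^\phi(X)_t$, so the Monotone Convergence Theorem and Theorem~\ref{prop:integration} yield
\[ \mathbb E_x V^\phi(X)_{\tau_D}=\lim_{t\to\infty}\mathbb E_x V^\phi(X)_t=\lim_{t\to\infty}\mathbb E_x\phi(X_t).\]
To evaluate the last limit I would use the maximal inequality of Corollary~\ref{cor:doob-cont}, which gives $\mathbb E_x\sup_{t\ge0}\phi(X_t)\le C_\phi\,\mathbb E_x\phi(\xi)<\infty$; combined with the a.s.\ convergence $X_t\to\xi$ (martingale closure) and the Dominated Convergence Theorem this produces
\[ \mathbb E_x V^\phi(X)_{\tau_D}=\mathbb E_x\phi(\xi)=\mathbb E_x\phi(u(Z_{\tau_D})).\]
(When $\tau_D<\infty$ a.s.\ one may instead quote Corollary~\ref{c.irt} once $\mathbb E_x V^\phi(X)_{\tau_D}<\infty$ is known.)

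Third, I would compute $V^\phi(X)_{\tau_D}$ pathwise. Since $u$ is harmonic in $D$, Lemma~\ref{lem:aux1} yields $u\in C^2(D)$, so the local It\^o formula of Lemma~\ref{eq:itolocal} applies; exhausting $D$ by relatively compact open sets $D_n\uparrow D$ with $\tau_{D_n}\uparrow\tau_D$ propagates the representation and the identity \eqref{eq:quad-var-cont} up to $\tau_D$, giving $[X]^c_t=\int_0^{t\wedge\tau_D}\langle Q\nabla u(Z_{s-}),\nabla u(Z_{s-})\rangle{\rm d}s$ (using $[Z^i,Z^j]^c_s=q_{ij}s$), while the jumps of $X$ up to $\tau_D$ are exactly those of $u(Z)$. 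Theorem~\ref{t.itm-smooth} (applicable because $\phi'$ is absolutely continuous) then gives
\[ V^\phi(X)_{\tau_D}=\phi(u(x))+\tfrac12\int_0^{\tau_D}\phi''(u(Z_{s-}))\,{\rm d}[X]^c_s+\!\!\!\sum_{0<s\le\tau_D,\,\Delta Z_s\neq0}\!\!\!F_\phi(u(Z_{s-}),u(Z_s)).\]
Taking $\mathbb E_x$ and using $Z_{s-}=Z_s$ for a.e.\ $s$, the Green operator converts the drift term into $\tfrac12\int_D G_D(x,y)\phi''(u(y))\langle Q\nabla u(y),\nabla u(y)\rangle{\rm d}y$, the diffusion term of \eqref{eq:sss} (recall $Q=\sigma^2\mathrm I$); the L\'evy system \cite[Lemma 4.1]{MR3737628} followed by the Green operator turns the jump term into $\int_D G_D(x,y)\int_{\Rd}F_\phi(u(y),u(y+z))\nu(z){\rm d}z\,{\rm d}y$. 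Comparing with the second paragraph yields \eqref{eq:sss}.

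I expect the two limiting passages, rather than the algebra, to be the main obstacle. Lemma~\ref{eq:itolocal} is stated only for relatively compact $U$, so reaching $\tau_D$ (especially for unbounded $D$) requires the exhaustion $D_n\uparrow D$ together with a verification that the stochastic integral, the continuous quadratic variation, and the jump sum pass to the limit consistently. Equally delicate is the limit $t\to\infty$ in the isometry, where Corollary~\ref{cor:doob-cont} is needed to dominate $\phi(X_t)$; note that naively letting $\tau_{D_n}\uparrow\tau_D$ inside $\mathbb E_x\phi(u(Z_{\tau_{D_n}}))$ is problematic because of the exit jump (the closing variable $\xi$ need not be $\mathcal F_{\tau_D-}$-measurable), which is exactly why I route the limit through the closed martingale $X$ rather than through the stopped exit positions. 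The L\'evy-system identity up to the stopping time $\tau_D$ and the use of Tonelli's theorem for the nonnegative integrands are then routine.
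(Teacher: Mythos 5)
Your proposal is correct in substance and uses exactly the paper's toolbox (Lemma~\ref{lem:aux1} for $u\in C^2(D)$, the local It\^o formula of Lemma~\ref{eq:itolocal} with \eqref{eq:quad-var-cont}, Theorem~\ref{t.itm-smooth}, the $\phi$-isometry of Theorem~\ref{prop:integration} with Doob's inequality from Corollary~\ref{cor:doob-cont}, the L\'evy system, the Green operator, Tonelli), but you arrange the two limiting procedures in the opposite order. The paper fixes a relatively compact open $U$ with $\overline{U}\subset D$, proves the full identity \eqref{eq:eliptic_phi_variation} for $U$ by letting $t\to\infty$ (your second paragraph is essentially the same argument, i.e., Corollary~\ref{c.irt} in disguise), rewrites it via $G_U$, and only then exhausts $D$ by $U_n$ \emph{at the level of expectations}: the Green and L\'evy terms increase monotonically in $U$, and the left-hand side converges by martingale convergence in $L^\phi$ for $u(Z_{\tau_{U_n}})$. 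You instead work from the outset with the single martingale $X=u(Z^{\tau_D})$ and push the exhaustion \emph{pathwise} inside the It\^o/quadratic-variation computation. What the paper's order buys is that every stochastic-calculus step happens strictly inside a relatively compact $U$, where $u$ and $\nabla u$ are bounded; your order buys a single application of the isometry and avoids rewriting the identity for each $U_n$, at the cost of justifying the pathwise limit $\tau_{D_n}\uparrow\tau_D$ in the representation.

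Two remarks on the delicate points you flag. First, your criticism of the ``naive'' limit $\mathbb E_x\phi(u(Z_{\tau_{D_n}}))\to\mathbb E_x\phi(u(Z_{\tau_D}))$ --- which is in fact the paper's route --- is not a real obstruction: on the event where $\tau_{U_n}<\tau_D$ for all $n$, quasi-left continuity of the Hunt process $Z$ gives $Z_{\tau_{U_n}}\to Z_{\tau_D}$ (so $Z$ does not jump at the announced time and the exit position is captured in the limit $\sigma$-algebra), while on the complementary event $\tau_{U_n}=\tau_D$ for $n$ large; combined with the domination supplied by Corollary~\ref{cor:doob-cont}, the closing variable causes no loss. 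Second, the very same quasi-left-continuity is what makes \emph{your} pathwise exhaustion consistent: since $V^\phi(X)$ is only c\`adl\`ag, $V^\phi(X)_{t\wedge\tau_{D_n}}\to V^\phi(X)_{(t\wedge\tau_D)-}$ when the increase is strict, and the potentially missing term $F_\phi(X_{\tau_D-},X_{\tau_D})$ from Corollary~\ref{c.t-} vanishes on that event precisely because $Z$ (hence $X$) is a.s.\ continuous at $\lim_n\tau_{D_n}$ there; you should spell this out. Also note that Theorem~\ref{t.itm-smooth} is stated for finite stopping times, so for unbounded $D$ (where $\mathbb P_x(\tau_D=\infty)$ may be positive) you should first apply it at $t\wedge\tau_{D_n}$ and then let $n,t\to\infty$ by monotone convergence of the nonnegative terms, as your plan implicitly does. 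With these justifications supplied, your argument is a valid, mildly reorganized version of the paper's proof.
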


\begin{proof}
Let $\emptyset \neq U \subset \overline {U}\subset D$ and let $X$ be given by \eqref{eq.dme}. Lemma \ref{lem:aux1} yields that $u\in C^2(D),$ so we read off the continuous part of the quadratic variation of $X$ from \eqref{eq:quad-var-cont}, which in this case equals to $\sigma^2\int_0^{t\wedge \tau_U} |\nabla u(Z_{s-})|^2{\rm d}s. $ Consequently, by
 Theorem~\ref{t.itm-smooth},
the $\phi$-variation of the martingale $X_t$ in \eqref{eq.dme}
is
   \begin{align}\label{eq:itm-smoothD}
   V_{\tau_U\wedge t}^\phi(X)  = \phi(X_0)+  \frac{\sigma^2}{2}\int_0^{\tau_U\wedge t} \phi''(u(Z_{s-}))|\nabla u(Z_s)|^2 {\rm d}s + \sum_{0<s\le \tau_U\wedge t, \Delta Z_s\neq 0} F_\phi(Z_{s-},Z_s).
\end{align}

By Jensen's inequality, we have $\mathbb E_x \phi(X_t)<
\infty$, so by Theorem~\ref{prop:integration} and the L\'evy system \cite[Lemma 4.7]{MR3737628} (note that $g_s := 1_{s \le \tau_U \wedge t}$ is predictable),
\begin{align} \label{eq:eliptic_stoped_phi_variation} \nonumber
\mathbb E_x\phi(u(Z_{\tau_U\wedge t}))=\mathbb E_x V_{\tau_U\wedge t} ^\phi(X) & = \phi(u(x)) + \frac{\sigma^2}{2} \mathbb E_x\left[ \int_0^{\tau_U\wedge t} \phi''(u(Z_{s-}))|\nabla u(Z_s)|^2 {\rm d}s\right] \nonumber \\
& + \mathbb E_x \left[\int_0^{\tau_U\wedge t} \int_{\Rd} F_\phi(u(Z_s), u(Z_s+z))\nu(z){\rm d}z{\rm d}s\right].
\end{align}
We now let $t\to\infty.$ The right-hand side of \eqref{eq:eliptic_stoped_phi_variation} is monotone in $t,$ so we can use the Monotone Convergence Theorem.

As for the left-hand side, 
we have pointwise convergence under $\mathbb E_x,$ and the convergence of integrals follows from the Dominated Convergence Theorem, since
\[\mathbb E_x\left[\sup_{t} \phi(X_t)\right] \leq C_\phi \sup_{t}\mathbb E_x\phi(X_t) 
= C_\phi \mathbb E_x\phi(u(Z_{\tau_U})) <\infty. \]
This gives
\begin{align} \label{eq:eliptic_phi_variation} \nonumber
\mathbb E_x\phi(u(Z_{\tau_U})) &= \phi(u(x))
+ \frac{\sigma^2}{2} \mathbb E_x\left[ \int_0^{\tau_U} \phi''(u(Z_{s-}))|\nabla u(Y_s)|^2 {\rm d}s\right] \nonumber \\
&+ \mathbb E_x \left[\int_0^{\tau_U} \int_{\Rd} F_\phi(u(Z_s), u(Z_s+z))\nu(z){\rm d}z{\rm d}s\right].
\end{align}
Using the Green operator, by Tonelli's theorem we obtain the result for $U.$ Then we exhaust $D$ with open sets $U_n$ compactly included in $D,$ and pass to the limit $n\to\infty.$ The right-hand side of \eqref{eq:eliptic_phi_variation} is monotone in $U$. For the left-hand side, we use the martingale convergence (in $L^\phi$) for $u(Z_{ \tau_{U_n}})$.
\end{proof}

If $u$ is harmonic in $D$ (not necessarily regular harmonic), then by monotonicity, we get the following statement.

\begin{cor}
Suppose that $D$ is as above and $u:\Rd\to \R$ is harmonic in $D.$
Then,  for any $x\in D,$
\begin{align*}
    \sup_{U \mbox{\tiny open,}x\in U\subset \overline{U}\subset D} & \mathbb E_x\phi(u(Z_{\tau_U})) = \phi(u(x))\\
    & +
 \int_D G_D(x,y)\left[ \frac{1}{2} \phi''(u(y))|\nabla u(y)|^2+ \int_{\Rd} F_\phi(u(y), u(y+z))\nu(z){\rm d}z\right]{\rm d}y. 
\end{align*}
\end{cor}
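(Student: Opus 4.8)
The plan is to deduce the Corollary from Proposition~\ref{p.piD} by exhausting $D$ with relatively compact open sets and passing to a monotone limit. First I would record the two structural facts that make this work: since $u$ is harmonic in $D$, it is regular harmonic in every open $U$ with $x\in U\subset\overline U\subset D$, and by Lemma~\ref{lem:aux1} (as already used in the proof of Proposition~\ref{p.piD}) we have $u\in C^2(D)$. Fix an increasing sequence of such sets $U_n$ with $x\in U_1$, $\overline{U_n}\subset U_{n+1}$, and $\bigcup_n U_n=D$, and abbreviate the right-hand side attached to an admissible $U$ by
\[
\Phi(U):=\phi(u(x))+\int_U G_U(x,y)\left[\tfrac12\phi''(u(y))|\nabla u(y)|^2+\int_{\Rd}F_\phi(u(y),u(y+z))\nu(z){\rm d}z\right]{\rm d}y .
\]

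Next I would establish, for each admissible $U$, the identity $\mathbb E_x\phi(u(Z_{\tau_U}))=\Phi(U)$ \emph{as an equality in $[0,\infty]$}, that is, without the finiteness hypothesis of Proposition~\ref{p.piD}. To this end I consider the c\`adl\`ag martingale $X_t:=u(Z_{\tau_U\wedge t})$; its continuous quadratic variation is read off from \eqref{eq:quad-var-cont}, and Theorem~\ref{t.itm-smooth} (cf.\ \eqref{eq:itm-smoothD}) yields the pathwise formula for $V^\phi(X)_{\tau_U}$. Taking $\mathbb E_x$ of that formula and using Tonelli's theorem together with the Green operator and the L\'evy system — all integrands being nonnegative — gives $\mathbb E_xV^\phi(X)_{\tau_U}=\Phi(U)$ in $[0,\infty]$, exactly as in the computation in the proof of Proposition~\ref{p.piD}, but now requiring no finiteness. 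It then remains to identify $\mathbb E_x\phi(u(Z_{\tau_U}))$ with $\mathbb E_xV^\phi(X)_{\tau_U}$. When $\Phi(U)<\infty$, this is Corollary~\ref{c.irt} applied to the a.s.\ finite stopping time $\tau_U$. When $\Phi(U)=\infty$, I argue by contraposition: if $\mathbb E_x\phi(u(Z_{\tau_U}))<\infty$, then by the conditional Jensen inequality $X$ is a $\phi$-integrable martingale, so Theorem~\ref{prop:integration} gives $\mathbb E_xV^\phi(X)_t=\mathbb E_x\phi(X_t)\le\mathbb E_x\phi(u(Z_{\tau_U}))$ for every $t$, and letting $t\to\infty$ by monotone convergence forces $\mathbb E_xV^\phi(X)_{\tau_U}<\infty$, a contradiction. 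Hence $\mathbb E_x\phi(u(Z_{\tau_U}))=\Phi(U)$ in every case.

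Finally I would pass to the limit. The map $U\mapsto\Phi(U)$ is nondecreasing, because $G_U$ increases with $U$ by domain monotonicity and the bracketed integrand is nonnegative; moreover $G_{U_n}\uparrow G_D$, so the Monotone Convergence Theorem gives $\Phi(U_n)\uparrow\phi(u(x))+\int_D G_D(x,y)[\cdots]{\rm d}y$, the claimed right-hand side. Any admissible $U$ has $\overline U$ compact and contained in $D=\bigcup_n U_n$, hence $U\subset U_n$ for large $n$; combining this with the per-$U$ identity and the monotonicity of $\Phi$ yields $\mathbb E_x\phi(u(Z_{\tau_U}))=\Phi(U)\le\Phi(U_n)=\mathbb E_x\phi(u(Z_{\tau_{U_n}}))$. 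Therefore $\sup_U\mathbb E_x\phi(u(Z_{\tau_U}))=\sup_n\Phi(U_n)=\lim_n\Phi(U_n)$, which is exactly the asserted identity.

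I expect the main obstacle to be the case $\mathbb E_x\phi(u(Z_{\tau_U}))=\infty$: since $u$ is controlled only inside $D$ while the exit position $Z_{\tau_U}$ may jump far outside $U$, the $\phi$-moment of $u(Z_{\tau_U})$ need not be finite, and Proposition~\ref{p.piD} does not apply verbatim. Routing the identity through the always-nonnegative, nondecreasing process $V^\phi(X)$ and Tonelli's theorem, so that both sides live in $[0,\infty]$, is precisely what removes this difficulty; the remaining steps — the Green-operator/L\'evy-system computation, the domain monotonicity of $G_U$, and the two monotone limits — are routine.
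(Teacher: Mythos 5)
Your proposal is correct and follows essentially the route the paper intends: its entire proof of this corollary is the phrase ``by monotonicity'' applied to Proposition~\ref{p.piD}, whose own proof already contains your exhaustion $U_n\uparrow D$, the Green-operator/L\'evy-system computation, and the monotone limits. Your main addition — establishing the per-$U$ identity $\mathbb E_x\phi(u(Z_{\tau_U}))=\Phi(U)$ as an equality in $[0,\infty]$, with the contraposition via Theorem~\ref{prop:integration} and Corollary~\ref{c.irt} to cover $\mathbb E_x\phi(u(Z_{\tau_U}))=\infty$ — is a legitimate and welcome filling-in of exactly the detail the paper glosses over, and the rest (domain monotonicity of $G_U$, $G_{U_n}\uparrow G_D$, cofinality of the exhausting sequence among admissible $U$) is carried out correctly.
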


\subsection{Examples}
Let $\phi(\lambda)=\lambda^p$ with $p\in(1,\infty)$. Taking $Q=0$ in Theorem~\ref{th:Hardy-general}, we recover \cite[Theorem 3.2]{MR3556449}.  On the other hand, taking $\nu=0$ and $B=2 {\rm I}_d$, we recover \cite[(1.4)] {2023+KB-MG-KPP}:
\begin{align}
		\label{eq:HSgauss2}
		\int_\Rd |f(x)|^p \,{\rm d}x = p(p-1) \int_0^\infty \int_\Rd |P_t f(x)|^{p-2} |\nabla P_t f(x)|^2 \,{\rm d}x {\rm d}t,
\end{align}
where $f\in L^p(\Rd)$,
	\begin{align*}
		P_t f(x)=\int_\Rd f(y) p_t(x,y) \,{\rm d}t,
		\quad t>0,\,x\in\Rd,
	\end{align*}
	and
	\begin{align*}
		p_t(x,y) = (4\pi t)^{-d/2} {\rm  e}^{-\frac{|y-x|^2}{4t}},
		\quad t>0,\,x,y\in\Rd.
	\end{align*}
Note that $Q$ and $\nu$ are combined within one formula \eqref{eq:Hardy-general}. This is new and difficult to obtain analytically. (The formula is \textit{not} additive in $Q$ and $\nu$ because $P_t$ depends on both.)

Similarly, \eqref{eq:sss} gives rise to separate formulations of the elliptic Hardy--Stein identities for pure-jump or 
diffusion L\'{e}vy processes. Again, the formula for processes with nontrivial  diffusion and jump parts is new.

The class of moderate Young functions with absolutely continuous derivative contains the functions $\lambda^p$, $\lambda^p\log^{\gamma_1}(e+\lambda)$,
$\lambda^p\log^{\gamma_1}(e+\lambda)(\log\log)^{\gamma_2}(e^e+\lambda),\ldots$, $p\in(1,\infty)$, $\gamma_1,\gamma_2,\ldots \in\mathbb R$, 
so all these functions fall within the scope of Theorem \ref{th:Hardy-general}. Such functions widely appear in the theory of PDEs; see, e.g., \cite{MR4518648} and the references therein.

\noindent	
	{\bf Acknowledgements.} We thank Iwona Chlebicka, Agnieszka Ka\l{}amajska, Mateusz Kwa\'{s}nicki,  Adam Os\c{e}kowski, Artur Rutkowski, and Ren\'e Schilling for helpful discussions. We also thank Alex Kulik, Jacek Małecki and Konstantin Merz for references.

\appendix

\section{Random variables and martingales in $L^\phi$} \label{a.MM}
We collect here some properties of random variables and processes that are $\phi$-integrable. Since we did not find a comprehensive reference, we provide some proofs.

\begin{lem}\label{lem:A-2}
 Assume that $\phi \in \Delta_2$ is a Young function. Let $X,Y$ be $\phi$-integrable random variables. Then, $\phi'(X)Y \in L^1(\mathbb P)$.

\end{lem}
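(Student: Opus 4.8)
The plan is to reduce the claim to the integrability of two nonnegative random variables via Young's inequality, and then to control the ``conjugate'' term using the Simonenko upper index. Since the target is $\mathbb E|\phi'(X)Y|<\infty$ and $\phi$ is even with both $\phi$ and $\phi^*$ Young functions, I would first apply \eqref{eq:Young} in the form
\[
|\phi'(X)|\,|Y|\le \phi(|Y|)+\phi^*(|\phi'(X)|).
\]
The first term on the right is already finite in expectation by hypothesis, because $\phi(|Y|)=\phi(Y)$ and $\mathbb E\phi(Y)<\infty$; thus everything hinges on showing $\mathbb E\phi^*(|\phi'(X)|)<\infty$.

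For the latter, the key is that $\phi'(\lambda)$ is a subgradient of $\phi$ at $\lambda$, so Young's inequality holds with equality at $\gamma=\phi'(\lambda)$: for $\lambda\ge 0$,
\[
\phi^*(\phi'(\lambda))=\lambda\phi'(\lambda)-\phi(\lambda).
\]
Combining this with the definition \eqref{e.ddDphi} of the upper Simonenko index, namely $\lambda\phi'(\lambda)\le D_\phi\,\phi(\lambda)$, I would obtain the pointwise bound $\phi^*(\phi'(\lambda))\le (D_\phi-1)\phi(\lambda)$. Because $\phi\in\Delta_2$ is equivalent to $D_\phi<\infty$ by \cite[Theorem 3.1.1]{MR1224450}, as recalled above, this constant is finite. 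Using evenness of $\phi$ to identify $|\phi'(X)|$ with the slope of $\phi$ at $|X|$ and $\phi(|X|)$ with $\phi(X)$, taking expectations would then give
\[
\mathbb E|\phi'(X)Y|\le \mathbb E\phi(Y)+(D_\phi-1)\,\mathbb E\phi(X)<\infty,
\]
as desired.

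The only delicate point, which I expect to be the main obstacle, is the bookkeeping around the one-sided derivative of the even function $\phi$: for $x<0$ one has $|\phi'(x)|=\phi'_{+}(|x|)$, the \emph{right} derivative, rather than the left derivative $\phi'$ used to define $D_\phi$. Since $\phi^*$ is nondecreasing on $[0,\infty)$ and the left and right derivatives of the convex $\phi$ agree off a countable set, this causes no loss; concretely, for $\mu>0$ convexity gives $\phi'_{+}(\mu)\le \phi'(\mu')$ for every $\mu'>\mu$, whence $\mu\,\phi'_{+}(\mu)\le \mu'\phi'(\mu')\le D_\phi\,\phi(\mu')$, and letting $\mu'\downarrow\mu$ yields $\mu\,\phi'_{+}(\mu)\le D_\phi\,\phi(\mu)$ by continuity of $\phi$. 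The Fenchel equality $\phi^*(\phi'_{+}(\mu))=\mu\,\phi'_{+}(\mu)-\phi(\mu)$ then gives the same bound $\phi^*(|\phi'(X)|)\le (D_\phi-1)\phi(X)$, and all remaining steps are routine applications of the monotonicity of $\phi,\phi^*$ and the finiteness of $D_\phi$.
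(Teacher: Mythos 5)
Your proof is correct, but it takes a genuinely different route from the paper's. The paper never touches the Legendre transform: it bounds $|\phi'(x)|\le D_\phi\,\phi(x)/|x|$ directly from \eqref{e.ddDphi} and then disposes of the factor $|y|$ by the elementary case split $\phi(x)|y|/|x|\le\phi(x)$ when $|x|\ge|y|$ and $\le\phi(y)$ when $|y|\ge|x|$ (using that $\phi(\lambda)/\lambda$ is nondecreasing), arriving at $|\phi'(x)y|\le D_\phi\bigl(\phi(x)+\phi(y)\bigr)$. You instead split the product via Young's inequality \eqref{eq:Young} and control the conjugate term through the Fenchel equality $\phi^*(\phi'(\lambda))=\lambda\phi'(\lambda)-\phi(\lambda)$, which with $\lambda\phi'(\lambda)\le D_\phi\phi(\lambda)$ gives $\phi^*(|\phi'(X)|)\le(D_\phi-1)\phi(X)$. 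Both arguments ultimately rest on the same fact, $D_\phi<\infty$ being equivalent to $\phi\in\Delta_2$, but yours is the standard Orlicz-duality mechanism: as a byproduct it shows $\phi'(X)$ lies in the Orlicz class of $\phi^*$ with the quantitative bound $\mathbb E\,\phi^*(|\phi'(X)|)\le(D_\phi-1)\,\mathbb E\,\phi(X)$, a reusable fact the paper's case analysis does not produce, and it yields the marginally sharper constant $\mathbb E\phi(Y)+(D_\phi-1)\mathbb E\phi(X)$ in place of $D_\phi\bigl(\mathbb E\phi(X)+\mathbb E\phi(Y)\bigr)$. The paper's version buys elementarity (no $\phi^*$ at all). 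Your careful treatment of the one-sided derivative at negative arguments --- noting $|\phi'(x)|=\phi'_{+}(|x|)$ for $x<0$ and passing $\mu'\downarrow\mu$ to get $\mu\,\phi'_{+}(\mu)\le D_\phi\,\phi(\mu)$ --- is sound and in fact addresses a bookkeeping point that the paper's one-line bound $|\phi'(x)|\le D_\phi\,\phi(x)/|x|$ quietly glosses over.
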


\begin{proof} Since $\phi \in \Delta_2$, we get $|\phi'(x)| \le D_\phi\phi(x)/|x|, x \in \mathbb R$, where $\phi(x)/|x|$ is  interpreted as $0$ if $x=0,$
Moreover, $\phi(x)|y|/|x| \le \phi(x)$ for $|x| \ge |y|$ and $\phi(x)|y|/|x| \le \phi(y)$ for $|y| \ge |x|$ because $\phi(x)/x$ is nondecreasing on $(0,\infty)$. Hence, $$ |\phi'(x)y| \le D_\phi\frac{\phi(x)}{|x|}|y| \le D_\phi\big( \phi(x) + \phi(y)\big), \quad x,y \in \mathbb R.$$
The desired integrability follows from the $\phi$-integrability of $X$ and $Y$.
\end{proof}

The following result  is a generalization of Doob's $L^p$-inequality, see Alsmeyer and Rösler \cite[Proposition 1.1]{MR2222745} see also K\"{u}hn and Schilling \cite{MR4568704} for a  survey and Osękowski \cite{MR2964297} for specialized results. 

\begin{lem}\label{th:A1}
If $\phi$ is a $C^1$ moderate Young function, then for any martingale $(X_n)_{n \geq 0}$,
\begin{equation}\label{eq:doob-phi}
\mathbb E[\sup_{k \le n} \phi(X_k)] \le C_\phi \mathbb E[\phi(X_n)],\; n  = 0,1,2,\ldots,
\end{equation}
where $C_\phi=\left(d_\phi/(d_\phi-1)\right)^{D_\phi}$ and $d_\phi$ and $D_\phi$ are the Simonenko indices of $\phi$.
\end{lem}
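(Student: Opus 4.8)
The plan is to combine Doob's weak-type maximal inequality with a layer-cake integration and then to close by a self-improving Young-type estimate, exactly as in the classical $L^p$ argument. First I would dispose of the trivial case: if $\mathbb E\phi(X_n)=\infty$ there is nothing to prove, so assume $\mathbb E\phi(X_n)<\infty$. Since $\phi$ is convex and even, $(|X_k|)_{k\le n}$ is a nonnegative submartingale and $(\phi(X_k))_{k\le n}$ is a submartingale; writing $X^*:=\max_{k\le n}|X_k|$ we have $\sup_{k\le n}\phi(X_k)=\phi(X^*)$ and, by Jensen and the submartingale property, $\mathbb E\phi(X^*)\le\sum_{k\le n}\mathbb E\phi(X_k)\le(n+1)\mathbb E\phi(X_n)<\infty$. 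This finiteness is what will justify the absorption step at the end.

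The backbone is Doob's weak-type inequality for the submartingale $(|X_k|)$, namely $\lambda\,\mathbb P(X^*\ge\lambda)\le\mathbb E[|X_n|;\,X^*\ge\lambda]$ for $\lambda>0$. Since $\phi$ is a $C^1$ Young function with $\phi(0)=0$, the layer-cake formula gives $\mathbb E\phi(X^*)=\int_0^\infty\phi'(\lambda)\,\mathbb P(X^*>\lambda)\,{\rm d}\lambda$. Inserting the weak-type bound and applying Tonelli, I obtain
\[\mathbb E\phi(X^*)\le\int_0^\infty\frac{\phi'(\lambda)}{\lambda}\,\mathbb E[|X_n|;\,X^*\ge\lambda]\,{\rm d}\lambda=\mathbb E\!\left[\,|X_n|\int_0^{X^*}\frac{\phi'(\lambda)}{\lambda}\,{\rm d}\lambda\right].\]
The inner integral is controlled by the lower Simonenko index. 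Integration by parts (the boundary term vanishes because $\phi(\lambda)/\lambda\to0$ as $\lambda\to0^+$) gives $\int_0^t\phi'(\lambda)\lambda^{-1}{\rm d}\lambda=\phi(t)/t+\int_0^t\phi(\lambda)\lambda^{-2}{\rm d}\lambda$, while $\lambda\phi'(\lambda)\ge d_\phi\phi(\lambda)$ yields $\int_0^t\phi'(\lambda)\lambda^{-1}{\rm d}\lambda\ge d_\phi\int_0^t\phi(\lambda)\lambda^{-2}{\rm d}\lambda$; comparing the two relations gives the clean estimate $\int_0^t\phi'(\lambda)\lambda^{-1}{\rm d}\lambda\le\frac{d_\phi}{d_\phi-1}\,\phi(t)/t$. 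Hence
\[\mathbb E\phi(X^*)\le\frac{d_\phi}{d_\phi-1}\,\mathbb E\!\left[\,|X_n|\,\frac{\phi(X^*)}{X^*}\right].\]

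It remains to convert this weighted inequality into the stated bound. Here I would mimic the self-improving step of the $L^p$ proof, where after reaching $\mathbb E(X^*)^p\le\frac{p}{p-1}\mathbb E[|X_n|(X^*)^{p-1}]$ one applies H\"older and divides. For general $\phi$ I would split the product $|X_n|\,\phi(X^*)/X^*$ by a scaled Young-type inequality, using that $\lambda\mapsto\phi(\lambda)/\lambda$ is nondecreasing together with the upper-index homogeneity $\phi(a\lambda)\le a^{D_\phi}\phi(\lambda)$ for $a\ge1$; the finiteness of $\mathbb E\phi(X^*)$ then permits absorbing the resulting $\phi(X^*)$-term on the left, and the exponent $D_\phi$ in $C_\phi=(d_\phi/(d_\phi-1))^{D_\phi}$ emerges precisely as the power $p$ does in the $L^p$ case. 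The hard part will be exactly this last step: obtaining the \emph{sharp} constant $(d_\phi/(d_\phi-1))^{D_\phi}$, rather than a merely index-dependent one, requires careful optimization of the split, and I would follow the scheme of Alsmeyer and R\"osler \cite{MR2222745} to recover the asserted value.
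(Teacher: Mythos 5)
The paper itself offers no proof of this lemma: it is quoted directly from Alsmeyer--R\"osler \cite[Proposition 1.1]{MR2222745}, so your blind reconstruction is being compared against that reference rather than against an argument in the text. Your skeleton is the right one and, up to the weighted inequality, everything you write is correct and complete: the a priori bound $\mathbb E\phi(X^*)\le (n+1)\mathbb E\phi(X_n)<\infty$ legitimizes the later absorption; the layer-cake plus Doob's weak-type bound and Tonelli are applied correctly; and your estimate
\[
\int_0^t \frac{\phi'(\lambda)}{\lambda}\,{\rm d}\lambda \;\le\; \frac{d_\phi}{d_\phi-1}\,\frac{\phi(t)}{t},
\]
obtained by comparing the integration-by-parts identity with $\lambda\phi'(\lambda)\ge d_\phi\phi(\lambda)$, is exactly the step where the lower Simonenko index enters (it also uses, correctly, that the integral is finite near $0$ because $d_\phi>1$). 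This gives the correct intermediate inequality $\mathbb E\phi(X^*)\le\frac{d_\phi}{d_\phi-1}\,\mathbb E\bigl[|X_n|\,\phi(X^*)/X^*\bigr]$, which is precisely the pivot of the Alsmeyer--R\"osler proof.

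The one substantive shortfall is the closing step, and you are right to flag it as the hard part --- but be aware that the split you propose cannot be optimized to the stated constant. Splitting on $\{|X_n|\ge\delta X^*\}$ with the monotonicity of $\phi(\lambda)/\lambda$ and $\phi(a\lambda)\le a^{D_\phi}\phi(\lambda)$ for $a\ge 1$ gives, after absorption, $\mathbb E\phi(X^*)\le \frac{c\,\delta^{1-D_\phi}}{1-c\delta}\,\mathbb E\phi(X_n)$ with $c=d_\phi/(d_\phi-1)$, and minimizing over $\delta\in(0,1/c)$ yields $c^{D_\phi}\,D_\phi^{D_\phi}(D_\phi-1)^{1-D_\phi}$, which is strictly larger than $C_\phi=c^{D_\phi}$ for every $D_\phi>1$. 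So the self-contained part of your argument proves \eqref{eq:doob-phi} only with a worse (still index-dependent) constant. The sharp value is reached by a different closing: one first upgrades the weighted inequality to the moment comparison $\mathbb E\phi(X^*)\le\mathbb E\phi\bigl(c\,|X_n|\bigr)$ (this is where the genuine work in \cite{MR2222745} lies), and only then invokes the upper-index homogeneity $\phi(c\lambda)\le c^{D_\phi}\phi(\lambda)$, valid since $c\ge1$ and $\phi(\lambda)/\lambda^{D_\phi}$ is nonincreasing --- the same fact the paper uses in the proof of Proposition \ref{prop:decomposition}. Since you explicitly defer this step to \cite{MR2222745}, which is also all the paper does, your proposal is acceptable as it stands; just do not present the $\delta$-split as a route to the constant $\left(d_\phi/(d_\phi-1)\right)^{D_\phi}$ itself.
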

Here is a variant for continuous time martingales, with the same $C_\phi$.
\begin{cor}\label{cor:doob-cont}
If $\phi$ is a $C^1$ moderate Young function, then for every
 c\`{a}dl\`{a}g martingale  $X=(X_t)_{0 \le t <\infty}$,
 \begin{equation}\label{eq:doob-phi-cont}
 \mathbb E[\sup_{0 \le s \le t} \phi(X_s)] \le C_\phi \mathbb E[\phi(X_t)], \quad 0\le t <\infty.
 \end{equation}
\end{cor}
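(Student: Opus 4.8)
The plan is to reduce the continuous-time inequality \eqref{eq:doob-phi-cont} to its discrete counterpart, Lemma~\ref{th:A1}, by a standard dyadic-skeleton argument, using the c\`adl\`ag regularity of the paths to recover the pathwise supremum. Fix $t \ge 0$; we may assume $\mathbb E[\phi(X_t)] < \infty$, since otherwise \eqref{eq:doob-phi-cont} is trivial. For $n=0,1,2,\ldots$, let $D_n := \{kt/2^n : k=0,1,\ldots,2^n\}$ be the $n$-th dyadic partition of $[0,t]$. As $X$ is a martingale and the points of $D_n$ are deterministic and increasing, the finite sequence $(X_{kt/2^n})_{k=0}^{2^n}$ is a discrete-time martingale with respect to $(\mathcal F_{kt/2^n})_{k=0}^{2^n}$, whose terminal value is $X_t$. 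Applying Lemma~\ref{th:A1} to this skeleton gives, with $C_\phi=(d_\phi/(d_\phi-1))^{D_\phi}$,
\[
\mathbb E[S_n] \le C_\phi\,\mathbb E[\phi(X_t)], \qquad \text{where } S_n := \max_{0 \le k \le 2^n} \phi(X_{kt/2^n}).
\]

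The next step is to pass to the limit $n\to\infty$. Since the partitions are nested, $D_n \subset D_{n+1}$, the random variables $S_n$ are nonnegative and nondecreasing, so $S_n \uparrow S_\infty := \sup_n S_n = \sup_{s \in D_\infty} \phi(X_s)$, where $D_\infty := \bigcup_n D_n$ is the set of dyadic times in $[0,t]$. The crucial point is to identify $S_\infty$ with the full pathwise supremum $\sup_{0 \le s \le t}\phi(X_s)$. Because $X$ is c\`adl\`ag and $\phi$ is continuous, the path $s \mapsto \phi(X_s)$ is c\`adl\`ag, and for any c\`adl\`ag function $g$ on $[0,t]$ and any dense subset $D \subset [0,t]$ containing the endpoint $t$ one has $\sup_{s \in [0,t]} g(s) = \sup_{s \in D} g(s)$: for $s<t$, right-continuity gives $g(s)=\lim_{u \downarrow s} g(u)$ along $u \in D \cap (s,t)$, while the value $g(t)$ is captured directly since $t \in D$. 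As $D_\infty$ is dense in $[0,t]$ and contains $t$, this yields $S_\infty = \sup_{0 \le s \le t}\phi(X_s)$ a.s.

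Finally, the Monotone Convergence Theorem applies to the nonnegative nondecreasing sequence $S_n \uparrow S_\infty$, giving
\[
\mathbb E\Big[\sup_{0 \le s \le t}\phi(X_s)\Big] = \mathbb E[S_\infty] = \lim_{n\to\infty}\mathbb E[S_n] \le C_\phi\,\mathbb E[\phi(X_t)],
\]
which is \eqref{eq:doob-phi-cont}. I expect the only genuinely delicate step to be the pathwise identity $S_\infty = \sup_{0\le s\le t}\phi(X_s)$: it is exactly here that c\`adl\`ag regularity is indispensable, since one must verify that neither the right-continuous values nor the left limits of the path can exceed the supremum over the countable dyadic set, and that including the terminal time $t$ is necessary to capture a possible jump at the endpoint. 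Everything else is a routine combination of the discrete inequality with monotone convergence.
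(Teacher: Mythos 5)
Your proof is correct and follows essentially the same route as the paper: both apply the discrete inequality of Lemma~\ref{th:A1} to martingale skeletons along an ascending sequence of finite subsets of $[0,t]$ containing $t$ (you take dyadic partitions), then use right-continuity of the c\`adl\`ag path to identify the supremum over the dense countable set with the full supremum, and conclude by the Monotone Convergence Theorem. Your explicit verification of the pathwise identity $\sup_{s\in[0,t]}\phi(X_s)=\sup_{s\in D_\infty}\phi(X_s)$ is a welcome elaboration of a step the paper leaves implicit.
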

\begin{proof}
Let $t \in [0,\infty)$. Let $\{D_n\}_{n \in \mathbb N_+}$ be an ascending sequence of finite subsets of $[0,t]$ containing $t$ and such that the set $D := \bigcup_{n=1}^\infty D_n$ is dense in $[0,t]$. By the right continuity of $X$, the Monotone Convergence Theorem, Lemma \ref{th:A1} and Jensen's inequality, we get
$$ \mathbb E[\sup_{s \le t}\phi(X_s)] = \mathbb E[\sup_{s \in D } \phi(X_s)] = \lim_{n \to \infty}\mathbb E[\sup_{s \in D_n} \phi(X_s)] \le C_\phi\mathbb E[\phi(X_{t})].\qedhere $$

\end{proof}

We finish this subsection with a simple lemma concerning the continuous embedding of $(L^\phi(\Omega,\mathbb P),\|\cdot\|_\phi)$ into $(L^1(\Omega,\mathbb P),\|\cdot\|_1)$. For completeness, we give a proof.

\begin{lem}\label{le:embedding}
Assume that $(\Omega,\mathcal F,\mathbb P)$ is a probability space and let $\phi$ be a Young function. 
Then there exists $A_\phi$ such that
$\|X\|_1 \le A_\phi \|X\|_\phi$ for all $X \in L^\phi(\Omega,\mathbb P)$.
\end{lem}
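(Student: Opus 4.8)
The plan is to exploit the superlinear growth of $\phi$ together with the finiteness of the measure $\mathbb P$, after reducing to the normalized case by homogeneity. First I would dispose of the degenerate case: if $\|X\|_\phi=0$, then for every $K>0$ we have $\mathbb E\phi(|X|/K)\le 1$, and since $\phi(\lambda)\to+\infty$ as $\lambda\to+\infty$, letting $K\to 0^+$ forces $X=0$ a.s.\ (by Fatou), whence $\|X\|_1=0$ and the claimed inequality is trivial. So I may assume $\|X\|_\phi>0$, and since both $\|\cdot\|_1$ and $\|\cdot\|_\phi$ are positively homogeneous, it suffices to bound $\|X\|_1$ for those $X$ with $\|X\|_\phi=1$; the general case then follows by applying the bound to $X/\|X\|_\phi$.

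Next, exactly as in the proof of Proposition~\ref{prop:decomposition}, I would choose $\lambda_0\in(0,\infty)$ such that $\phi(\lambda)\ge\lambda$ for $\lambda\ge\lambda_0$, which is possible because $\lim_{\lambda\to+\infty}\phi(\lambda)/\lambda=+\infty$. By \eqref{eq:orlicz2} applied on the probability space, the normalization $\|X\|_\phi=1$ gives $\mathbb E\phi(|X|)\le 1$. I would then split the expectation according to the size of $|X|$:
\[
\mathbb E|X| = \mathbb E\big[|X|\,\mathbf{1}_{|X|\ge \lambda_0}\big] + \mathbb E\big[|X|\,\mathbf{1}_{|X|<\lambda_0}\big].
\]
On the event $\{|X|\ge\lambda_0\}$ the choice of $\lambda_0$ yields $|X|\le\phi(|X|)$, so the first term is at most $\mathbb E\phi(|X|)\le 1$; on $\{|X|<\lambda_0\}$ the integrand is bounded by $\lambda_0$, so the second term is at most $\lambda_0\,\mathbb P(\Omega)=\lambda_0$. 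Hence $\mathbb E|X|\le 1+\lambda_0$, and $A_\phi:=1+\lambda_0$ works: for general $X$ with $\|X\|_\phi>0$, applying this bound to $X/\|X\|_\phi$ gives $\|X\|_1\le A_\phi\|X\|_\phi$.

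There is essentially no hard step here. The only point that deserves care is that the low-values regime genuinely uses $\mathbb P(\Omega)<\infty$ (this is precisely why the continuous embedding $L^\phi\hookrightarrow L^1$ is available on a probability space and may fail on an infinite measure space), and that the degenerate case $\|X\|_\phi=0$ must be handled separately rather than through the homogeneity reduction.
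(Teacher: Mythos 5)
Your proof is correct and takes essentially the same route as the paper: the paper derives an affine minorant $\phi(\lambda)\ge a_\phi\lambda-b_\phi$ from convexity and superlinearity, integrates it against $\mathbb P$, and uses $\mathbb P(\Omega)=1$, while your split of $\mathbb E|X|$ at the threshold $\lambda_0$ is precisely this bound with $a_\phi=1$ and $b_\phi=\lambda_0$. Both arguments reduce to $\|X\|_\phi=1$ by homogeneity and rest on the modular bound $\mathbb E\,\phi(|X|)\le 1$ from \eqref{eq:orlicz2}; your explicit treatment of the degenerate case $\|X\|_\phi=0$ is a fine (if slightly more detailed) version of the paper's ``the case $\|X\|_\phi=0$ is trivial.''
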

\begin{proof}
Due to homogeneity, we may assume that $\|X\|_\phi = 1$; the case $\|X\|_\phi = 0$ is trivial.
Since $\phi(\lambda)$ is convex and non-zero for large $|\lambda|$, there exist constants $a_\phi,b_\phi > 0$ such that $\phi(\lambda) \ge a_\phi \lambda - b_\phi$ for every $\lambda \ge 0$. By this and \eqref{e.dLn},
\[ \|X\|_1 = \int_{\Omega} |X| {\rm d} \mathbb P \le \frac{1}{a_\phi}\int_{\Omega}\left(\phi(|X|) + b_\phi\right){\rm  d}\mathbb P \le \frac{1+b_\phi}{a_\phi} = \frac{1+b_\phi}{a_\phi}\|X\|_\phi.\qedhere \]

\end{proof}

\section{A primer on stochastic integration}\label{sec:stochint}\label{a.si}
The following are excerpts from  \cite{MR2273672}.
\begin{dfn}
 Process $H$ is said to be \textbf{elementary predictable}, if \begin{equation} \label{eq:elem-pred}
 H_t = H_01_{\{0\}}(t)+\sum_{k=1}^{n} H_k 1_{(T_k,T_{k+1}]}(t), \qquad t \in [0,\infty), \end{equation}
  where $n \in \mathbb N_+$, $0= T_1 \le \ldots \le T_{n+1} < \infty$ are stopping times and $H_0,\ldots,H_n$ are a.s. finite random variables such that $H_k$ is $\mathcal F_{T_k}$-measurable, $k=1,2,\ldots$
\end{dfn}
 By $\mathcal S$ we denote the family of all elementary predictable processes.
 The linear space $\mathcal S$  endowed with the topology of uniform convergence on $\Omega \times [0,\infty)$ will be denoted $\mathcal S_u$.

By $\mathbb D$ (respectively $\mathbb L)$ we denote the spaces of càdlàg, adapted processes (respectively, càglàd, adapted processes). For a process $Y$, let $Y^*_t = \sup_{s \le t}|Y_s|$ for $t \ge 0$. We consider the following notion of convergence.

\begin{dfn}
A sequence of càdlàg processes $(X^{(n)})_{n \in \mathbb N_+}$ converges to a process $X$ \textbf{uniformly on compacts in probability} (in short, \textbf{ucp}) if for every $t\in [0,\infty)$ we have $(X^{(n)} - X)^*_t \xrightarrow[n \to \infty]{\mathbb P} 0$. We then write $X^{(n)} \xrightarrow[]{ucp}X$.
\end{dfn}

By $\mathbb D_{ucp},
 \mathbb L_{ucp},
 \mathcal S_{ucp}$ we denote the spaces $\mathbb D,$
 $\mathbb L,$
 $\mathcal S$ (respectively) endowed with \textit{ucp} topology. Let us recall that given a càdlàg, adapted process $X=(X_t)_{t \ge 0}$ and stopping time $\tau$, the stopped process $X^\tau$, defined by $X^\tau_t = X_{\min\{t,\tau\}}$ is also càdlàg and adapted.

\begin{dfn}
For a semimartingale $X$ we define the mapping $J_X : \mathcal S_{ucp} \to \mathbb D_{ucp}$ by
$$ J_X(H) = H_0X_0 + \sum_{k=1}^n H_k(X^{T_{k+1}} - X^{T_k}),$$
for $H \in \mathcal S$ of the form \eqref{eq:elem-pred}.
\end{dfn}
 $J_X$ is a well-defined linear mapping. If $X$ is càdlàg, then $J_X(H) \in \mathbb D$ for every $H \in \mathcal S$, because $J_X(H)$ is a sum of càdlàg processes.
As the space $\mathcal S_{ucp}$ is dense in $\mathbb L_{ucp}$, and for
a semimartingale $X$, the mapping $J_X:\mathcal S_{ucp} \to \mathbb D_{ucp}$ is continuous
\cite[Chapter II, Theorems 10, 11]{MR2273672}, the mapping  $J_X$ extends from $\mathcal S_{ucp}$ to $\mathbb L_{ucp}$. It is called the stochastic integral with respect to $X$.
 Summarizing, the integral
  \[J_X(Y) = (J_X(Y)_t)_{t \ge 0} = \Big(\int_0^t Y_sdX_s\Big)_{t \ge 0} \]
  is well defined for every semimartingale $X$ and every process $Y \in \mathbb L_{ucp}$. If $X$ is a local martingale, then $J_X(Y)$ is a local martingale as well \cite[Chapter III, Theorem 33]{MR2273672}. Of course, $J_X(Y)$ is denoted $Y\cdot X$ in our development above.

A sequence of random partitions $\pi^{(n)} = \{0=\tau_0^{(n)}\le \tau_1^{(n)} < \ldots < \tau_{k_n}^{(n)}\},$ $n=1,2,\ldots$, is said to tend to the identity when $n\to\infty,$  if $\diam(\pi^{(n)}) = \max_{j<n}|\tau_{j+1}^{(n)}-t_j^{(n)}|$ converge to $0$ a.s. and $\tau_{k_n}^{(n)} \to \infty$ a.s.
Here $\tau_j^{(n)}$ are assumed to be stopping times.
The next theorem asserts that the stochastic integral  above can be approximated by discrete integral sums (see \cite[Chapter II, Theorem 29]{MR2273672}).
\begin{thm}\label{th:approx}
Let $X=(X_t)_{t \in (0,\infty)}$ be a semimartingale and let $Y \in \mathbb L$. If
\[\pi^{(n)}=\{0=t_0^{(n)}<t^{(n)}_1<\ldots<t_{k_n}^{(n)}=T\},\quad n=1,2,\ldots,\]
is a sequence of random partitions tending to identity,
then
$$ \sum_{j=0}^{k_{n}-1} Y_{t_j^{(n)}}\big( X^{t_{j+1}^{(n)}} - X^{t_j^{(n)}}\big) \xrightarrow[n \to \infty]{\text{ucp}} J_X(Y).$$
\end{thm}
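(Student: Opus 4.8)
The plan is to realise each Riemann sum as the stochastic integral $J_X$ of an elementary predictable integrand, and then pass to the limit by a dominated-convergence argument for stochastic integrals. For each $n$ define the càglàd, adapted step process
\[
Y^{(n)} := \sum_{j=0}^{k_n-1} Y_{t_j^{(n)}}\,\mathbf 1_{(t_j^{(n)},\,t_{j+1}^{(n)}]},
\]
which lies in $\mathcal S$ because each coefficient $Y_{t_j^{(n)}}$ is $\mathcal F_{t_j^{(n)}}$-measurable and the $t_j^{(n)}$ are stopping times. By the definition of $J_X$ on $\mathcal S$ one has $J_X(Y^{(n)}) = \sum_{j=0}^{k_n-1} Y_{t_j^{(n)}}(X^{t_{j+1}^{(n)}} - X^{t_j^{(n)}})$, so the Riemann sum in the statement equals $J_X(Y^{(n)})$ exactly. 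It therefore suffices to prove $J_X(Y^{(n)}) \to J_X(Y)$ in ucp. (The only bookkeeping concerns $t=0$: the pathwise limit of $Y^{(n)}$ found below vanishes at $0$, so the limit is $\int_{0+}^{\cdot} Y\,\mathrm dX$, which coincides with $J_X(Y)$ under the standing convention $X_{0-}=0$, and in particular whenever $Y_0=0$, as holds in every application in this paper, where $Y=\phi'(X_-)$ and $\phi'(0)=0$.)

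First I would establish convergence of the integrands pathwise. Fix $(s,\omega)$ with $s>0$. Since $\diam(\pi^{(n)})\to 0$ and $t_{k_n}^{(n)}\to\infty$ a.s., for all large $n$ the point $s$ belongs to a partition interval $(t_j^{(n)},t_{j+1}^{(n)}]$ whose left endpoint increases to $s$ from below, $t_j^{(n)}\uparrow s$ with $t_j^{(n)}<s$. As $Y$ is càglàd, hence left-continuous, $Y^{(n)}_s = Y_{t_j^{(n)}}\to Y_s$; thus $Y^{(n)}\to Y$ pointwise on $(0,\infty)\times\Omega$. Moreover the sequence is uniformly dominated: setting $G_s:=\sup_{0\le u\le s}|Y_u|$, which is finite since càglàd paths are locally bounded, its left-continuous version $G_-\in\mathbb L$ satisfies $|Y^{(n)}_s|=|Y_{t_j^{(n)}}|\le \sup_{u<s}|Y_u| = G_{s-}$ and, by left-continuity, $|Y_s|\le G_{s-}$ as well.

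With pointwise convergence and the domination $|Y^{(n)}|\le G_-\in\mathbb L$ in hand, I would invoke the dominated convergence theorem for stochastic integrals (Protter, Theorem II.32) to conclude $J_X(Y^{(n)})\to J_X(Y)$ in ucp, which is the assertion; the hypothesis $t_{k_n}^{(n)}\to\infty$ ensures that for each fixed horizon $t$ the partitions eventually cover $[0,t]$, so the convergence is genuinely uniform on compacts in probability.

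The main obstacle is exactly this last step, and it explains why the continuity of $J_X$ on $\mathbb L_{ucp}$ does \emph{not} by itself suffice: the integrands $Y^{(n)}$ do not converge to $Y$ in the ucp topology. Indeed, if $Y$ has a right-jump sitting on a partition point, then $\sup_{s}|Y^{(n)}_s-Y_s|$ stays bounded away from $0$, so one cannot simply feed a ucp-convergent sequence into the continuous map $J_X$. What saves the argument is that $Y^{(n)}$ differs from $Y$ only on a union of intervals whose influence, measured against $\mathrm dX$, is negligible as the mesh shrinks; capturing this quantitatively is precisely the content of the stochastic dominated convergence theorem, whose proof combines the continuity of $J_X$ with a maximal inequality and a localisation reducing to bounded integrands and integrators.
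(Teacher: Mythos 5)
Your proof is correct, and in substance it is the canonical argument: the paper itself does not prove Theorem \ref{th:approx} but imports it from Protter \cite[Chapter II, Theorem 29]{MR2273672}, whose proof likewise realises the Riemann sums as $J_X$ applied to the elementary predictable processes $Y^{(n)}$ and then passes to the limit by a domination argument (localisation to bounded integrands, a Doob/$\mathcal H^2$-type estimate for the local-martingale part, pathwise dominated convergence for the finite-variation part) --- exactly the content you package into the stochastic dominated convergence theorem. Three small points. First, your citation is off: Theorem II.32 in Protter is the It\^o formula (the paper itself cites it as such in the proof of Lemma \ref{eq:itolocal}); the dominated convergence theorem for stochastic integrals is Theorem IV.32, stated for predictable integrands dominated by an $X$-integrable process. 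Your $Y^{(n)}$, $Y$ and $G_-$ qualify, being left-continuous and adapted, hence predictable and locally bounded; one should also add the standard remark that for $Y\in\mathbb L$ the Chapter~IV predictable integral coincides with $J_X(Y)$, so the DCT limit is the right object, and note there is no circularity, since Protter's Chapter~IV theory does not rest on Theorem II.29. Second, your diagnosis that $Y^{(n)}\not\to Y$ in ucp (a right-jump of $Y$ straddling partition points keeps $\sup_s|Y^{(n)}_s-Y_s|$ away from $0$), so that continuity of $J_X$ on $\mathbb L_{ucp}$ alone cannot close the argument, is exactly right and is why the theorem is not a formal consequence of the construction of $J_X$. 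Third, your bookkeeping at $t=0$ is apt: the sums omit the term $Y_0X_0$ contained in $J_X(Y)$ under the paper's convention, so the statement is literally exact only when $Y_0X_0=0$ --- as in every use in this paper, where $Y=\phi'(X_-)$ and $\phi'(X_{0-})=\phi'(0)=0$; Protter sidesteps this by writing the limit as $\int_{0+}Y_s\,{\rm d}X_s$.
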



\end{document}